\renewcommand\theequation{\thesection.\arabic{equation}}
\newcommand{\BC}{{\mathbb {C}}}
\newcommand{\BG}{{\mathbb {G}}}
\newcommand{\BH}{{\mathbb {H}}}
\newcommand{\BQ}{{\mathbb {Q}}}
\newcommand{\BR}{{\mathbb {R}}}
\newcommand{\BZ}{{\mathbb {Z}}}
\newcommand{\CC}{{\mathcal {C}}}
\renewcommand{\CD}{{\mathcal {D}}}
\newcommand{\CE}{{\mathcal {E}}}
\newcommand{\CI}{{\mathcal {I}}}
\newcommand{\CJ}{{\mathcal {J}}}
\newcommand{\CL}{{\mathcal {L}}}
\newcommand{\CO}{{\mathcal {O}}}
\newcommand{\CP}{{\mathcal {P}}}
\newcommand{\CQ}{{\mathcal {Q}}}
\newcommand{\CS}{{\mathcal {S}}}
\newcommand{\CW}{{\mathcal {W}}}
\newcommand{\CX}{{\mathcal {X}}}
\newcommand{\CZ}{{\mathcal {Z}}}
\newcommand{\FD}{{\mathfrak {D}}}
\newcommand{\Fe}{{\mathfrak {e}}}
\newcommand{\Fl}{{\mathfrak {l}}}
\newcommand{\Fm}{{\mathfrak {m}}}
\newcommand{\Fn}{{\mathfrak {n}}}
\newcommand{\Fo}{{\mathfrak {o}}}
\newcommand{\Fp}{{\mathfrak {p}}}
\newcommand{\RI}{{\mathrm {I}}}
\newcommand{\RO}{{\mathrm {O}}}
\newcommand{\Ad}{{\mathrm{Ad}}}
\newcommand{\bp}{{\mathrm{bp}}}
\newcommand{\disc}{{\mathrm{disc}}}
\newcommand{\End}{{\mathrm{End}}}
\newcommand{\GL}{{\mathrm{GL}}}
\newcommand{\gp}{{\mathrm{gp}}}
\newcommand{\gen}{{\mathrm{gen}}}
\newcommand{\Hom}{{\mathrm{Hom}}}
\newcommand{\Hss}{{\mathrm{Hss}}}
\renewcommand{\Im}{{\mathrm{Im}}}
\newcommand{\Ind}{{\mathrm{Ind}}}
\newcommand{\nsd}{{\mathrm{nsd}}}
\newcommand{\QD}{\mathrm{QD}}
\newcommand{\SL}{{\mathrm{SL}}}
\newcommand{\SO}{{\mathrm{SO}}}
\newcommand{\sgn}{{\mathrm{sgn}}}
\newcommand{\Sp}{{\mathrm{Sp}}}
\newcommand{\st}{{\mathrm{st}}}
\newcommand{\Span}{{\mathrm{Span}}}
\newcommand{\sg}{{\mathrm{sg}}}
\newcommand{\temp}{{\mathrm{temp}}}
\newcommand{\ud}{\,\mathrm{d}}
\newcommand{\udl}{\underline}
\newcommand{\wh}{\widehat}
\newcommand{\apair}[1]{\left\langle {#1} \right\rangle}
\newcommand{\cpair}[1]{\left\{{#1}\right\}}
\newcommand{\ppair}[1]{\left( {#1} \right)}
\def\bks{{\backslash}}
\def\eps{{\epsilon}}
\def\lam{{\lambda}}
\def\sig{{\sigma}}
\def\zet{{\zeta}}
\newtheorem{thm}{Theorem}[section]
\newtheorem{cor}[thm]{Corollary}
\newtheorem{lem}[thm]{Lemma}
\newtheorem{prop}[thm]{Proposition}
\newtheorem {conj}[thm]{Conjecture}
\newtheorem {ques/conj}[thm]{Question/Conjecture}
\newtheorem{defn}[thm]{Definition}
\newtheorem{rmk}[thm]{Remark}
\newtheorem{exmp}[thm]{Example}
\def\ve{{\varepsilon}}
\newcommand{\Rmnum}[1]{\expandafter\@slowromancap\romannumeral #1@}
\begin{document}
\renewcommand{\theequation}{\arabic{equation}}
\numberwithin{equation}{section}

\title[Spectral Decomposition and Local Descents]{Local Root Numbers and Spectrum of the Local Descents for Orthogonal Groups: $p$-adic case}

\author{Dihua Jiang}
\address{School of Mathematics, University of Minnesota, Minneapolis, MN 55455, USA}
\email{dhjiang@math.umn.edu}

\author{Lei Zhang}
\address{Department of Mathematics,
National University of Singapore,
Singapore 119076}
\email{matzhlei@nus.edu.sg}

\subjclass[2010]{Primary 11F70, 22E50; Secondary 11S25, 20G25}
\keywords{Restriction and Local Descent, Generic Local Arthur Packet, Local Langlands Correspondence, Local Root Numbers, and Local Gan-Gross-Prasad Conjecture}

\date{\today}
\thanks{The research of the first named author is supported in part by the NSF Grants DMS--1301567 and DMS--1600685,
and that of the second named author is supported in part by the start-up grant and AcRF Tier 1 grant R-146-000-237-114 of National University of Singapore.}

\begin{abstract}
We investigate the local descents for special orthogonal groups over $p$-adic local fields of characteristic zero, and obtain explicit
spectral decomposition of the local descents at the first occurrence index in terms of the local Langlands data via the explicit
local Langlands correspondence and explicit calculations of relevant local root numbers. The main result can be regarded as a refinement of the local Gan-Gross-Prasad conjecture (\cite{GGP}).
\end{abstract}

\maketitle
\tableofcontents

\section{Introduction}

Let $G$ be a group and $H$ be a subgroup of $G$. For any representation $\pi$ of $G$, it is a classical problem that looks for the spectral decomposition
of the restriction of $\pi$ from $G$ to $H$. The spectral decomposition problem can also be reformulated in a different way.
For a given $\pi$ of $G$ and a subgroup $H$, which
representation $\sigma$ of $H$ has the property that
\begin{equation}\label{eq-m}
\Hom_H(\pi,\sigma)\neq 0?
\end{equation}
And what is the dimension of this $\Hom$-space?
When $\pi$ or $H$ is arbitrarily given, such a spectral decomposition is hard to be well understood, and those questions may not have reasonable
answers.

When $G$ is a Lie group or more generally a locally compact topological group defined by a reductive algebraic group, one may seek geometric conditions
on the pair $(G,H)$ such that the multiplicity $m(\pi,\sigma)$, which is the dimension of the $\Hom$-space in \eqref{eq-m}, is bounded or at most one.
In such a circumstance, one may seek invariants attached to $\pi$ and $\sigma$ that detect the multiplicity $m(\pi,\sigma)$. The local Gan-Gross-Prasad
conjecture for classical groups $G$ defined over a local field $F$ (\cite{GGP}) is one of the most successful examples concerning with those general questions.
When the local field $F$ is a finite extension of the $p$-adic number field $\BQ_p$ for some prime $p$, the local Gan-Gross-Prasad conjecture
for orthogonal groups has been completely resolved by the work of J.-L. Waldspurger and of C. M\oe glin and Waldspurger (\cite{W10}, \cite{W12a},
\cite{W12b}, and \cite{MW12}).

One of the basic notions in the local Gan-Gross-Prasad conjecture for orthogonal groups $G$ is the so called {\sl Bessel models}.
Over a $p$-adic local field $F$, Bessel models are defined in terms of a special family of twisted Jacquet functors. It is proved through the work of
\cite{AGRS}, \cite{SZ}, \cite{GGP}, and \cite{JSZ} that the Bessel models over any local fields of characteristic zero are of multiplicity free.
The local Gan-Gross-Prasad conjecture is to detect the multiplicity (which is either $1$ or $0$) in terms of the sign of the relevant local
$\varepsilon$-factors.

Meanwhile, the Bessel models have been widely used in the theory of the Rankin-Selberg method to
study families of automorphic $L$-functions and to define the corresponding local $L$-factors and local $\gamma$-factors.
In terms of representation theory and the local Langlands functoriality, the Bessel models produce the local descent method, which has been
successfully used in the explicit construction of the certain local Langlands functorial transfers for classical groups (\cite{JS-AM} and \cite{JS-AJM}).
In the spirit of the Bernstein-Zelevinsky derivatives for irreducible admissible representations of general
linear groups over $p$-adic local fields (\cite{BZ}), the Bessel models can be regarded as a tool to investigate basic properties of
irreducible admissible representations of $G(F)$ in general.

For instance, if $G$ is an odd special orthogonal group $\SO_{2n+1}$, then the local descents constructed via the family of Bessel models may produce
representations on the family of even special orthogonal groups $\SO_{2m}$, whose $F$-ranks should be controlled (up to $\pm1$)
by the $F$-rank $\delta$ of $\SO_{2n+1}$,
with $m=n-\delta, n-\delta+1,\cdots,n-1, n$. When $m=n$, it is the restriction from
$\SO_{2n+1}$ to $\SO_{2n}$, which is the case of the classical problem of {\sl symmetric breaking}. Hence the explicit spectral decomposition
when a representation $\pi$ of $\SO_{2n+1}(F)$ descends, via the twisted Jacquet functors of Bessel type, to $\SO_{2m}(F)$ is an interesting and
important problem, and may be considered as a refinement of the local Gan-Gross-Prasad conjecture.
One of the problems in our mind is to understand the spectral decomposition of the local descent for special orthogonal groups over $p$-adic local fields
in terms of the local Langlands parameters.

We explain our approach below with more details. The method is applicable to other classical groups. In some cases, we have to replace the Bessel models by the
Fourier-Jacobi models, following the formulation of the local Gan-Gross-Prasad conjecture in \cite{GGP}. The connection of the
results in this paper to automorphic forms is considered in the work of the authors \cite{JZ-BF}.

\subsection{Local descents}\label{ssec-LD}
Let $F$ be a non-archimedean local field of characteristic zero, which is a finite extension of $p$-adic number field $\BQ_p$ for some prime $p$.
As in \cite{JZ-BF} and \cite[Chapter 9]{A13}, we use $G_n^*=\SO(V^*,q^*_{V^*})$ to denote an $F$-quasisplit special orthogonal group that is defined by a non-degenerate, $\Fn$-dimensional quadratic space $(V^*,q^*)$ over $F$ with $n=[\frac{\Fn}{2}]$ and use $G_n=\SO(V,q)$ to denote a pure inner $F$-form of $G_n^*$. This means that both quadratic spaces $(V^*,q^*)$ and $(V,q)$ have the same dimension and the same discriminant,
as discussed in \cite{GGP}, for instance.

Let $\Pi(G_n)$ be the set of equivalence classes of irreducible smooth representations of $G_n(F)$. It is well-known that any $\pi\in\Pi(G_n)$ is
also admissible. Let $\mathfrak{r}=\dim X^\pm$ be the $F$-rank of $G_n$. Take $X^+$ to be an $\mathfrak{r}$-dimensional totally isotropic subspace of
$(V,q)$, and take $X^-$ to be the dual subspace of $X^+$. Then one has a polar decomposition of $(V,q)$:
$V=X^-\oplus V_0\oplus X^+$,
where $(V_0,q_{V_0})$ is the $F$-anisotropic kernel of $(V,q)$. With a suitable choice of the order of the dual bases in $X^-$ and $X^+$, one must
have a minimal parabolic subgroup $P_0$ of $G_n$, whose unipotent radical can be realized in the upper triangular matrix form.
For any standard $F$-parabolic subgroup $P=MN$, containing $P_0$, of $G_n$, take a character
$\psi_N$ of the unipotent radical $N(F)$ of $P(F)$, which is defined through a non-trivial additive character $\psi_F$ of $F$.
One may define the {\sl twisted Jacquet module} for any $\pi\in\Pi(G_n)$ with respect to $(N,\psi_N)$ to be
the quotient
$$
\CJ_{N,\psi_N}(V_\pi):=V/V(N,\psi_N),
$$
where $V(N,\psi_N)$ is the span of the subset
$$
\{\pi(n)v-\psi_N(n)v\ |\ \forall v\in V_\pi, \forall n\in N(F)\}.
$$
Let $M_{\psi_N}$ is the stabilizer of $\psi_N$ in $M$. Then the twisted Jacquet module $\CJ_{N,\psi_N}(V_\pi)$ is a smooth representation of
$M_{\psi_N}(F)$. In such a generality, one may not have much information about the twisted Jacquet module $\CJ_{N,\psi_N}(V_\pi)$, as
a representation of $M_{\psi_N}(F)$. Following the inspiration of the
Bernstein-Zelevinsky theory of derivatives for representations of $p$-adic $\GL_n$ (\cite{BZ}),
the theory of the local descents is to obtain more explicit information about the twisted Jacquet module $\CJ_{N,\psi_N}(V_\pi)$
in terms of the given $\pi$ and its local Langlands parameter, for a family of specially chosen data $(N,\psi_N)$.

To introduce the twisted Jacquet modules of Bessel type,
we take a family of partitions of the form:
\begin{equation}\label{udlp}
\udl{p}_\ell:=[(2\ell+1)1^{{\Fn-2\ell-1}}],
\end{equation}
with $0\leq\ell\leq \mathfrak{r}$. Those partitions $\udl{p}_\ell$ are {\sl $G_n$-relevant} in the sense that they correspond to $F$-rational unipotent
orbits of $G_n(F)$. As in \cite{JZ-BF},
the $F$-stable nilpotent orbit $\CO_{\udl{p}_\ell}^\st$ corresponding to the partition $\udl{p}_\ell$ defines a unipotent subgroup
$V_{\udl{p}_\ell}$ of $G_n$ over $F$, and each $F$-rational orbit $\CO_\ell$ in the $F$-stable orbit $\CO_{\udl{p}_\ell}^\st$ defines
a {\sl generic} character $\psi_{\CO_\ell}$ of $V_{\udl{p}_\ell}(F)$.

More precisely, let $\{e_{\pm 1},e_{\pm2},\dots,e_{\pm\mathfrak{r}}\}$ be a basis of $X^{\pm}$, respectively  such that $q(e_i,e_{-j})=\delta_{i,j}$ for all $1\leq i,j\leq \mathfrak{r}$.
Then we may choose the minimal parabolic subgroup $P_0$ to fix the following totally isotropic flag in $(V,q)$:
\begin{equation}\label{eq:V-ell}
V^{+}_1\subset V^{+}_2\subset\cdots \subset V^{+}_{\mathfrak{r}}\quad \text{ where } V^{\pm}_i=\Span\{e_{\pm 1},\dots,e_{\pm i}\}. 	
\end{equation}
For the partition $\underline{p}_{\ell}$ in \eqref{udlp}, we consider the standard parabolic subgroup $P_{1^\ell}=M_{1^\ell}N_{1^\ell}$,
containing $P_0$, with the Levi subgroup
$M_{1^\ell}\cong\GL_1^{\times\ell}\times G_{n-\ell}$
and $V_{\udl{p}_\ell}=N_{1^\ell}$.
Here  $V_{\udl{p}_\ell}$ consists of elements of form:
\begin{equation}\label{eq:V-p-ell}
V_{\udl{p}_\ell}=\cpair{v=\begin{pmatrix}
z&y&x\\&I_{\Fn-2\ell}&y'\\&&z^*	
\end{pmatrix}\in G_n\colon z\in Z_{\ell} },	
\end{equation}
where $Z_\ell$ is the standard maximal (upper-triangular) unipotent subgroup of $\GL_\ell$.
Then the $F$-rational nilpotent orbits $\CO_\ell$ in the $F$-stable nilpotent orbit $\CO^{\rm st}_{\udl{p}_\ell}$ correspond to the
$\GL_1(F)\times G_{n-\ell}(F)$-orbits of $F$-anisotropic vectors in $(F^{\Fn-2\ell},q)$.
The generic character $\psi_{\CO_\ell}$ of $V_{\udl{p}_\ell}(F)$ may be explicitly defined as follows.
Fix a non-trivial additive character $\psi_F$ of $F$.
For an anisotropic vector $w_0$ in $((V_\ell^+\oplus V_\ell^-)^\perp,q)$ associated to the $F$-rational orbit $\CO_\ell$ in $\CO^{\rm st}_{\udl{p}_\ell}$,
define a character $\psi_{\ell, w_0}$ of $V_{\udl{p}_\ell}(F)$ by
\begin{equation}\label{eq:psi-ell}
\psi_{\CO_\ell}(v)=\psi_{\ell,w_0}:=\psi(\sum_{i=1}^{\ell-1}z_{i,i+1}+q(y_\ell,w_0))
\end{equation}
where $z_{i,j}$ is the entry of the matrix $z$ in the $i$-th row and $j$-th column
and  $y_\ell$ is the last row of the matrix $y$ in \eqref{eq:V-p-ell}.

The Levi subgroup $M_{1^\ell}$ acts on the set of those generic characters $\psi_{\ell,w_0}$ via the adjoint action on $V_{\udl{p}_\ell}$.
We denote by
$G_{n}^{\CO_\ell}$ the  identity component  of the stabilizer in $M_{1^\ell}$ of the character $\psi_{\CO_\ell}$, viewed as a subgroup of $G_{n-\ell}$.
By Proposition 2.5 of \cite{JZ-BF}, the algebraic group $G_{n}^{\CO_\ell}$ is a special orthogonal group defined over $F$ by a non-degenerate
quadratic subspace $(W_\ell,q)$ of $(V,q$) with dimension $\Fn-2\ell-1$.
Here if $\psi_{\CO_\ell}$ is of form $\psi_{\ell,w_0}$,
\begin{equation}\label{eq:w}
W_\ell=(V^+_\ell\oplus\Span\{w_0\}\oplus V^-_\ell)^\perp
\end{equation}
and $G_{n}^{\CO_\ell}$ can be identified as the special orthogonal group $\SO(W_\ell,q)$. We refer to \cite[Proposition 2.5]{JZ-BF} for more structures of $G_{n}^{\CO_\ell}$.

We define the following subgroup of $G_n$, which is a semi-direct product,
\begin{equation}\label{stab}
R_{\CO_\ell}:=G_{n}^{\CO_\ell}\ltimes V_{\udl{p}_\ell}.
\end{equation}
For any $\pi\in\Pi(G_n)$, the twisted Jacquet module with respect to the pair
$(V_{\udl{p}_\ell}, \psi_{\CO_\ell})$ is defined by
\begin{equation}\label{tjm}
\CJ_{\CO_\ell}(\pi)=\CJ_{\CO_\ell}(V_\pi)=\CJ_{V_{\udl{p}_\ell}, \psi_{\CO_\ell}}(V_\pi):=V_\pi/V_\pi(V_{\udl{p}_\ell}, \psi_{\CO_\ell}),
\end{equation}
which may also be called the {\sl twisted Jacquet module of Bessel type} of $\pi$.

For an irreducible admissible representation $\sigma$ of $G_{n}^{\CO_\ell}(F)$, the linear functionals that
belong to the following $\Hom$-space
\begin{equation}\label{lbm}
\Hom_{R_{\CO_\ell}(F)}(\pi\otimes\sigma^\vee,\psi_{\CO_\ell}),
\end{equation}
where $\sigma^\vee$ is the admissible dual of $\sigma$, are called the {\sl local Bessel functionals} for the pair $(\pi,\sigma)$. The uniqueness of local Bessel functionals asserts that
\begin{equation}\label{lunq}
\dim \Hom_{R_{\CO_\ell}(F)}(\pi\otimes\sigma^\vee,\psi_{\CO_\ell})\leq 1.
\end{equation}
This was proved in \cite{AGRS}, \cite{SZ}, \cite{GGP}, and \cite{JSZ}. It is clear that
$$
\Hom_{R_{\CO_\ell}(F)}(\pi\otimes\sigma^\vee,\psi_{\CO_\ell})
\cong
\Hom_{G_{n}^{\CO_\ell}(F)}(\CJ_{\CO_\ell}(\pi),\sigma).
$$
It is a natural problem to understand possible irreducible quotients $\sigma$ of the module $\CJ_{\CO_\ell}(\pi)$ of $G_{n}^{\CO_\ell}(F)$.
The local Gan-Gross-Prasad conjecture is to determine such a quotient $\sigma$ by means of the sign of the epsilon factor for the pair $(\pi,\sigma)$.
One of the main results of this paper is to determine all possible quotients $\sigma$'s for a given $\pi$ with explicit description of their local Langlands
parameters (Theorem \ref{th:sdlp}), when the index $\ell$ is the first occurrence index (given in Definition \ref{df:foi}).

In order to understand all possible irreducible quotients, we introduce a notion of the $\ell$-th {\sl maximal quotient} of $\pi$ if $\CJ_{\CO_\ell}(\pi)$ is nonzero. Define
\begin{equation}\label{msb}
\CS_{\CO_\ell}(\pi):=\cap_{\Fl_\sigma}\ker(\Fl_\sig),
\end{equation}
where the intersection is taken over all local Bessel functionals $\Fl_\sig$ in $\Hom_{G_{n}^{\CO_\ell}(F)}(\CJ_{\CO_{\ell}}(\pi),\sig)$ for all  $\sig\in\Pi(G^{\CO_{\ell}}_n)$. The $\ell$-th maximal quotient of $\pi$ is defined to be
\begin{equation}\label{ell-mq}
\CQ_{\CO_{\ell}}(\pi):=\CJ_{\CO_\ell}(\pi)/\CS_{\CO_\ell}(\pi).
\end{equation}
Of course, if $\CJ_{\CO_\ell}(\pi)$ is zero, we define $\CQ_{\CO_{\ell}}(\pi)$ to be zero.
In this paper, we study this quotient for irreducible admissible representations $\pi$ of $G_n(F)$ with generic local
$L$-parameters, the definition of which will be explicitly given in Section \ref{ssec-LVP}.
Since we mainly discuss the local situation, we may call  the local $L$-parameters {\sl the $L$-parameters}   for simplicity.
\begin{prop}\label{pp:iq}
For any $\pi\in\Pi(G_n)$ with a generic $L$-parameter, if the twisted Jacquet module $\CJ_{\CO_\ell}(\pi)$ is non-zero, then there exists a $\sigma\in\Pi(G^{\CO_{\ell}}_n)$
such that
$$
\Hom_{G^{\CO_{\ell}}_n(F)}(\CJ_{\CO_\ell}(\pi),\sigma)\ne 0.
$$
Namely, the twisted Jacquet module $\CJ_{\CO_\ell}(\pi)$ of $\pi$ is non-zero if and only if the $\ell$-th maximal quotient $\CQ_{\CO_{\ell}}(\pi)$
of $\pi$ is non-zero.
\end{prop}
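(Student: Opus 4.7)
The plan is to show that $\CJ_{\CO_\ell}(\pi)$ is finitely generated as a smooth representation of $H := G_n^{\CO_\ell}(F)$. Once this is established, Zorn's lemma on the poset of proper $H$-subrepresentations of $\CJ_{\CO_\ell}(\pi)$ produces a maximal proper subrepresentation, whose quotient $\sigma$ is irreducible and yields a nonzero element of $\Hom_{H}(\CJ_{\CO_\ell}(\pi), \sigma)$. The converse direction of the stated equivalence is immediate, since $\CQ_{\CO_\ell}(\pi)$ is by definition a quotient of $\CJ_{\CO_\ell}(\pi)$.

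First I would observe that finite generation over $H$ is equivalent to finite generation over $R_{\CO_\ell}(F) = H \ltimes V_{\udl{p}_\ell}(F)$. Indeed, since $V_{\udl{p}_\ell}(F)$ acts on $\CJ_{\CO_\ell}(\pi)$ through the scalar character $\psi_{\CO_\ell}$, one has $(gu) \cdot w = \psi_{\CO_\ell}(u)\,(g \cdot w)$ for every $g \in H$, $u \in V_{\udl{p}_\ell}(F)$, and $w \in \CJ_{\CO_\ell}(\pi)$, so the $R_{\CO_\ell}(F)$- and $H$-spans of any finite subset coincide. To establish finite generation over $R_{\CO_\ell}(F)$, I would use the admissibility of $\pi$ (automatic from irreducibility) together with a twisted variant of Casselman's theorem on Jacquet functors: the twisted Jacquet functor $\CJ_{\CO_\ell}$ sends finite-length admissible smooth $G_n(F)$-modules to finite-length admissible smooth $H$-modules. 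The proof proceeds via an orbit analysis of the action of $M_{1^\ell}(F)$ on the characters of the abelianization of $V_{\udl{p}_\ell}(F)$, isolating the stratum corresponding to the $F$-rational orbit $\CO_\ell$, and invoking the orbit-stabilizer description $G_n^{\CO_\ell} \cong \SO(W_\ell, q)$ from \cite[Proposition 2.5]{JZ-BF}.

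The hard part is the twisted Casselman-type finiteness statement for the Bessel-type twisted Jacquet functor along $V_{\udl{p}_\ell}$. While the untwisted version is classical, the twisted case requires a careful orbit analysis combined with the explicit form of the character $\psi_{\CO_\ell}$ in \eqref{eq:psi-ell} and the semidirect-product structure of $R_{\CO_\ell}$ in \eqref{stab}. The generic $L$-parameter hypothesis on $\pi$ is invoked here to control the stratification, most likely through the existence of a Whittaker-generic representative in the Vogan packet of $\pi$ together with the associated structural information on Bessel models developed in the works of Waldspurger and M\oe glin-Waldspurger \cite{W10,W12a,W12b,MW12}.
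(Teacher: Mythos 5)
Your plan hinges on establishing that $\CJ_{\CO_\ell}(\pi)$ is finitely generated as an $H$-module (or even of finite length, via a ``twisted Casselman theorem''), but this is false in general, and the paper is careful to avoid precisely this. Twisted Jacquet functors of Bessel type do \emph{not} preserve finite length or admissibility: the paper itself remarks, after Theorem~\ref{pp:dsds}, that the local descent at the first occurrence index can be a direct sum of \emph{infinitely many} irreducible square-integrable representations, and that for $\ell<\ell_0$ the descent need not even be completely reducible. Since $\CD_{\CO_\ell}(\pi)$ is a quotient of $\CJ_{\CO_\ell}(\pi)$, the twisted Jacquet module itself is in general not finitely generated as an $H$-module, so Zorn's lemma cannot be applied to it directly, and there is no twisted Casselman-type finiteness theorem of the kind you envisage. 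The intermediate claim ``the twisted Jacquet functor sends finite-length admissible modules to finite-length admissible modules'' is simply not true in this degenerate-unipotent setting.

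What the paper actually proves (Lemma~\ref{lm:giq}) is the weaker and correct statement that \emph{each Bernstein component} of $\CJ_{\CO_\ell}(\pi)$ is finitely generated; this suffices because if the module is nonzero then some Bernstein component is nonzero, that component is a finitely generated direct summand, so it (and hence the whole module) admits an irreducible quotient. The finite generation of the Bernstein components is not obtained by an orbit stratification along the Casselman--Bernstein--Zelevinsky line, but rather by realizing $\pi^\vee\otimes\CJ_{\CO_\ell}(\pi)$ as a quotient of the Schwartz space of the spherical-type homogeneous space $(G_n^{\CO_\ell}\ltimes V_{\udl{p}_\ell})\backslash(G_n\times G_n^{\CO_\ell})$ twisted by $\psi_{\CO_\ell}$, and then invoking Theorem~A of Aizenbud--Avni--Gourevitch \cite{AAG}, which asserts finite generation of Bernstein components for such spherical Schwartz spaces. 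Finally, your invocation of the generic $L$-parameter hypothesis is spurious at this stage: Lemma~\ref{lm:giq} is proved for every irreducible smooth $\pi$, with no genericity assumption; the generic hypothesis only enters later (Corollary~\ref{pp:gtq}, Proposition~\ref{pp:sld}) when the local Gan--Gross--Prasad machinery is brought in.
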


Proposition \ref{pp:iq} follows from Lemma \ref{lm:giq} in Section \ref{sec-ITQ}.
By Proposition \ref{pp:iq}, if $\CJ_{\CO_\ell}(\pi)$
is non-zero, then the $\ell$-th maximal quotient $\CQ_{\CO_{\ell}}(\pi)$ is non-zero. In this situation, we set
\begin{equation}\label{lld}
\CD_{\CO_{\ell}}(\pi):=\CQ_{\CO_{\ell}}(\pi)
\end{equation}
and call $\CD_{\CO_{\ell}}(\pi)$ the $\ell$-th {\sl local descent} of $\pi$ (with respect to the given $F$-rational orbit $\CO_\ell$).
Note that the group $G_{n}^{\CO_\ell}(F)$ and the representation $\CD_{\CO_{\ell}}(\pi)$ depend on the $F$-rational structure of orbit $\CO_\ell$.

The {\sl theory of the local descents} is to understand the structure of the $\ell$-th local descent $\CD_{\CO_{\ell}}(\pi)$ as a representation of $G_{n}^{\CO_\ell}(F)$,
in particular, in the situation when $\ell$ is the first occurrence index. In order to define the notion of the {\sl first occurrence},
we prove the following {\sl stability} of the local descents.

\begin{prop}\label{pp:sl}
For any $\pi\in\Pi(G_n)$ with a generic $L$-parameter,
if there exists an $\ell_1$ such that the $\ell_1$-th local descent $\CD_{\CO_{\ell_1}}(\pi)$ is non-zero for some $F$-rational orbit $\CO_{\ell_1}$,
then the $\ell$-th local descent $\CD_{\CO_{\ell}}(\pi)$ is non-zero
for every $\ell\leq \ell_1$ with a certain compatible $\CO_\ell$.
\end{prop}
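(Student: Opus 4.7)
\medskip
\noindent
\textbf{Proof proposal.}
The plan is to reduce to the increment $\ell_1=\ell+1$ and then exhibit the twisted Jacquet functor $\CJ_{\CO_{\ell_1}}$ as an iterated (\emph{induction in stages}) composition of $\CJ_{\CO_\ell}$ with a smaller twisted Jacquet functor intrinsic to the descent group $G_n^{\CO_\ell}$. By Proposition~\ref{pp:iq}, non-vanishing of $\CD_{\CO_\ell}(\pi)$ is equivalent to $\CJ_{\CO_\ell}(\pi)\neq 0$; thus it suffices, assuming $\CJ_{\CO_{\ell_1}}(\pi)\neq 0$, to produce a compatible $\CO_\ell$ with $\CJ_{\CO_\ell}(\pi)\neq 0$, and an easy induction on $\ell_1-\ell$ reduces to $\ell_1=\ell+1$.

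For the compatibility of data I would choose, at both levels, the \emph{same} anisotropic vector $w_0=w_1$. Since $V^\pm_{\ell_1}\supset V^\pm_\ell$ we have $(V^+_{\ell_1}\oplus V^-_{\ell_1})^\perp\subset(V^+_\ell\oplus V^-_\ell)^\perp$, so a vector anisotropic in the smaller space remains anisotropic in the larger one; this defines the compatible orbit $\CO_\ell$ with $W_{\ell_1}\subset W_\ell$ of codimension two. Next, the inclusion of standard parabolics $P_{1^{\ell_1}}\subset P_{1^\ell}$ gives $V_{\udl{p}_\ell}\subset V_{\udl{p}_{\ell_1}}$, and the quotient $V_{\udl{p}_{\ell_1}}/V_{\udl{p}_\ell}$ is a unipotent subgroup of the Levi $M_{1^\ell}=\GL_1^\ell\times G_{n-\ell}$. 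Under the identification of $G_n^{\CO_\ell}$ with $\SO(W_\ell)$ (Proposition 2.5 of \cite{JZ-BF}), this quotient embeds, after a suitable Weyl-element conjugation aligning the two block decompositions, as the Bessel-type unipotent effecting the rank-one further descent from $\SO(W_\ell)$ to $\SO(W_{\ell_1})$.

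The key computation is the character matching: restricting the explicit formula \eqref{eq:psi-ell} for $\psi_{\CO_{\ell_1}}$ from $V_{\udl{p}_{\ell_1}}$ to $V_{\udl{p}_\ell}$ recovers $\psi_{\CO_\ell}$ (up to the alignment conjugation), while the character induced on the quotient $V_{\udl{p}_{\ell_1}}/V_{\udl{p}_\ell}$ is precisely the generic Bessel character for the inner descent, in which $w_1$ now plays the role of the anisotropic parameter in $W_\ell$. Once this is checked, induction in stages yields the natural identification
\[
\CJ_{\CO_{\ell_1}}(\pi)\;\cong\;\CJ^{\,G_n^{\CO_\ell}}_{\mathrm{Bessel},\,\ell\to\ell_1}\!\bigl(\CJ_{\CO_\ell}(\pi)\bigr),
\]
so that $\CJ_{\CO_\ell}(\pi)=0$ would force $\CJ_{\CO_{\ell_1}}(\pi)=0$, contradicting the hypothesis. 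Applying Proposition~\ref{pp:iq} to the non-zero $\CJ_{\CO_\ell}(\pi)$ then produces the required irreducible quotient $\sigma$, hence $\CD_{\CO_\ell}(\pi)\neq 0$.

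The main obstacle is the character compatibility of the previous paragraph. The formulas in \eqref{eq:psi-ell} are written in different coordinates at the two levels, and a naive restriction does not land on $\psi_{\CO_\ell}$: the row $y_\ell$ of an element of $V_{\udl{p}_\ell}\subset V_{\udl{p}_{\ell_1}}$ is shuffled when one passes to the finer block decomposition, so that part of what contributed to $q(y_\ell,w_0)$ at level $\ell$ migrates into the diagonal sub-diagonal entries $(z_{11})_{i,i+1}$ at level $\ell_1$, and vice versa. Resolving this requires an explicit conjugation by a Weyl-type element that re-identifies the two Bessel structures, together with a short root-exchange calculation to absorb the residual off-diagonal entries; this is the one genuinely delicate point in the otherwise formal induction-in-stages argument.
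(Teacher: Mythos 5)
Your argument takes an entirely different route from the paper's, and as written it has a gap that is not a mere bookkeeping issue. The paper establishes this (as Proposition~\ref{pp:sld}) by passing through the local Gan--Gross--Prasad machinery: one takes an irreducible tempered quotient $\sigma$ of the descent at the highest occurring level --- it exists by Corollary~\ref{pp:gtq}, which uses the generic $L$-parameter hypothesis --- so that $m(\pi,\sigma)\neq 0$; one then parabolically induces $\tau\otimes\sigma$ in $G_n^{\CO_\ell}$ with a unitary, non-self-dual, supercuspidal $\tau$ of $\GL_{\ell_1-\ell}(F)$, getting an irreducible tempered $\sigma_1$; and Proposition~\ref{pro:MW12} (M\oe glin--Waldspurger's invariance of the multiplicity under parabolic induction) gives $m(\pi,\sigma_1^{\vee})=m(\pi,\sigma)\neq 0$, hence $\CJ_{\CO_\ell}(\pi)\neq 0$. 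Notice that your proposal would, if correct, dispense with the generic $L$-parameter hypothesis entirely, whereas the paper's argument uses it at its core; that discrepancy is already a warning.

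The gap in the proposed induction-in-stages is concrete. Writing $V_{\udl{p}_{\ell_1}}=V_{\udl{p}_\ell}\rtimes U$ with $U=V_{\udl{p}_{\ell_1}}\cap M_{1^\ell}$ does factor the twisted Jacquet functor, but the inner step is taken with respect to the restriction $\psi_{\CO_{\ell_1}}|_{V_{\udl{p}_\ell}}$, and that character is \emph{not} $M_{1^\ell}$-conjugate to $\psi_{\CO_\ell}$. Restricting \eqref{eq:psi-ell} at level $\ell_1$ to $V_{\udl{p}_\ell}$ replaces the pairing $q(y_\ell,w_0)$ with $q(y_\ell,e_{-(\ell+1)})$: the relevant vector is the \emph{isotropic} $e_{-(\ell+1)}$, not the anisotropic $w_0$. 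Isotropic and anisotropic vectors lie in different $\GL_1(F)\times G_{n-\ell}(F)$-orbits, so no Weyl-type conjugation in $M_{1^\ell}$ identifies the two characters; the inner coinvariant is a \emph{degenerate} Jacquet module whose stabilizer is a proper parabolic of $G_{n-\ell}$, not $G_n^{\CO_\ell}$, and it is not $\CJ_{\CO_\ell}(\pi)$. The mismatch is also visible in dimensions: for $G_n=\SO_7$, $\ell=1$, $\ell_1=2$, one has $\dim\bigl(V_{\udl{p}_2}/V_{\udl{p}_1}\bigr)=3$, while the Bessel unipotent effecting the one-step descent inside $G_n^{\CO_1}\cong\SO_4$ has dimension $2$; the extra direction lives in $G_{n-\ell}\supsetneq G_n^{\CO_\ell}$ and corresponds precisely to $w_0$. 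Passing from the degenerate inner coinvariant to the Bessel one $\CJ_{\CO_\ell}(\pi)$ is the actual content of an exchange-of-roots argument in the spirit of Ginzburg--Rallis--Soudry; it cannot be a ``short calculation riding on a conjugation,'' because within $M_{1^\ell}$ the needed conjugation does not exist. The paper sidesteps all of this by arguing on the spectral side with Proposition~\ref{pro:MW12}.
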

We note that the details of the compatibility of $\CO_{\ell_1}$ and $\CO_{\ell}$ will be given in Proposition \ref{pp:sld}.
Proposition \ref{pp:sl} follows essentially from the relation between multiplicity and parabolic induction as discussed in Section
\ref{ssec-MPI} and will be included in Proposition \ref{pp:sld} on the stability of the local descents in Section \ref{sec-ITQ}.

\begin{defn}[First Occurrence Index]\label{df:foi}
For $\pi\in\Pi(G_n)$, the first occurrence index $\ell_0=\ell_0(\pi)$ of $\pi$ is the integer $\ell_0$ in $\{0,1,\dots,\mathfrak{r}\}$
where $\mathfrak{r}$ is the $F$-rank of $G_n$,
such that the twisted Jacquet module $\CJ_{\CO_{\ell_0}}(\pi)$ is nonzero for some $F$-rational orbit $\CO_{\ell_0}$,
but for any $\ell\in \{0,1,\dots,\mathfrak{r}\}$ with $\ell>\ell_0$, the twisted Jacquet module $\CJ_{\CO_{\ell}}(\pi)$ is zero
for every $F$-rational orbit $\CO_{\ell}$ associated to the partition $\udl{p}_\ell$ as defined in \eqref{udlp}.
\end{defn}

It is clear that the definition of the first occurrence index is also applicable to the representations $\pi$ that may not be irreducible.
At the first occurrence index, we define the notion of the local descent of $\pi$.

\begin{defn}[Local Descent]\label{df:ld}
For any given $\pi\in\Pi(G_n)$, assume that $\ell_0$ is the first occurrence index of $\pi$.  The $\ell_0$-th local descent $\CD_{\CO_{\ell_0}}(\pi)$
of $\pi$ is called the first local descent of $\pi$ (with respect to $\CO_{\ell_0}$) or simply the local descent of $\pi$, which is the $\ell_0$-th nonzero maximal quotient
$$
\CD_{\CO_{\ell_0}}(\pi):=\CQ_{\CO_{\ell_0}}(\pi)
$$
for the $F$-rational orbit $\CO_{\ell_0}$ associated to the partition $\udl{p}_{\ell_0}$ as defined in \eqref{udlp}.
\end{defn}
It is clear that such an $\CO_{\ell_0}$ always exists by the definition of $\ell_0$, but may not be unique.
Also, when $G_n=G_n^*$ is $F$-quasisplit and $\pi$ is generic, i.e. has a nonzero Whittaker model, the first occurrence index
is clearly $\ell_0=n=[\frac{\Fn}{2}]$, where $\Fn=\dim V^*$. The discussion related to the first occurrence index
in this paper will exclude this trivial case.

\subsection{Main results}\label{ssec-MT}
The main results of the paper are about the spectral properties of the $\ell$-th local descents of the irreducible smooth
representations of $G_n(F)$ with generic $L$-parameters. At the first occurrence index, the spectral properties of the
local descents are explicit.

\begin{thm}[Square Integrability]\label{th:ldds}
Assume that $\pi\in\Pi(G_n)$ has a generic $L$-parameter.
Then, at the first occurrence index $\ell_0=\ell_0(\pi)$ of $\pi$,
the local descent $\CD_{\CO_{\ell_0}}(\pi)$ is square-integrable and admissible.
Moreover, the local descent $\CD_{\CO_{\ell_0}}(\pi)$ is a multiplicity-free direct sum of
irreducible square-integrable representations, with all its irreducible summands belonging to different Bernstein components.
\end{thm}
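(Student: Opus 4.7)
The plan is to derive all three assertions — square-integrability, admissibility/finite-length, and the multiplicity-free decomposition with separated Bernstein components — by combining Casselman's square-integrability criterion with the first-occurrence hypothesis. Any obstruction (non-square-integrability, multiplicity, or a Bernstein coincidence among summands) will be converted, via Frobenius reciprocity and a geometric-lemma argument, into a non-vanishing Bessel functional for $\pi$ at a strictly higher index, contradicting the definition of $\ell_0$.

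\textbf{Admissibility.} Since $\pi$ is irreducible it is finitely generated; standard results on twisted Jacquet functors over $p$-adic reductive groups then imply that $\CJ_{\CO_{\ell_0}}(\pi)$ is finitely generated and admissible as a representation of $G_n^{\CO_{\ell_0}}(F)$, and hence of finite length. Its maximal semisimple quotient $\CD_{\CO_{\ell_0}}(\pi)$ is therefore an admissible finite direct sum of irreducibles.

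\textbf{Square-integrability.} Let $\sigma$ be an irreducible summand of $\CD_{\CO_{\ell_0}}(\pi)$ and suppose, for contradiction, that $\sigma$ is not square-integrable. Casselman's criterion produces a proper standard $F$-parabolic subgroup $Q = LU$ of $G_n^{\CO_{\ell_0}}$ and an irreducible $\tau = \tau_0 \otimes \sigma_0$ of $L(F)$, with $\tau_0$ on the $\GL$-factor of $L$ having a central exponent failing strict positivity, such that $\sigma \hookrightarrow \Ind_Q^{G_n^{\CO_{\ell_0}}}(\tau)$. Composing the surjection $\CJ_{\CO_{\ell_0}}(\pi) \twoheadrightarrow \sigma$ with this embedding and applying Frobenius reciprocity gives
\[
\Hom_{R_{\CO_{\ell_0}}(F)}\bigl(\pi\otimes\Ind_Q^{G_n^{\CO_{\ell_0}}}(\tau)^{\vee},\,\psi_{\CO_{\ell_0}}\bigr)\neq 0.
\]
I would then apply the geometric lemma to stratify this $\Hom$-space by the $R_{\CO_{\ell_0}}(F)$-orbits on $G_n^{\CO_{\ell_0}}/Q$. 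On each orbit, combining the restricted character with the unipotent radical $U$ enlarges the Bessel data $(V_{\udl{p}_{\ell_0}}, \psi_{\CO_{\ell_0}})$ into a Bessel-type triple $(V_{\udl{p}_{\ell_0+k}}, \psi_{\CO_{\ell_0+k}})$ at a larger index $\ell_0 + k$ with $k \ge 1$. First occurrence forces every such contribution to vanish, while the positivity failure of the exponent of $\tau_0$ rules out the open-orbit contribution as well. This is the desired contradiction, so $\sigma$ must be square-integrable.

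\textbf{Multiplicity-freeness and distinct Bernstein components.} The uniqueness inequality for local Bessel functionals bounds $\dim\Hom_{G_n^{\CO_{\ell_0}}(F)}(\CJ_{\CO_{\ell_0}}(\pi), \sigma)$ by one, so every irreducible summand of $\CD_{\CO_{\ell_0}}(\pi)$ occurs with multiplicity exactly one. If two distinct summands $\sigma_1, \sigma_2$ shared a Bernstein component with supercuspidal datum $(L,\rho)$ on a proper Levi, then each would satisfy $\Hom_L(r_Q(\sigma_i), \rho) \ne 0$, producing two independent nonzero maps from $\CJ_{\CO_{\ell_0}}(\pi)$ into $\Ind_{\bar Q}^{G_n^{\CO_{\ell_0}}}(\rho)$. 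Running the same Mackey-filtration analysis, and exploiting the cuspidality of $\rho$ to force all surviving orbits to be non-open, yields a non-vanishing Bessel functional for $\pi$ at some index $\ell_0 + k > \ell_0$, again contradicting first occurrence.

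\textbf{Main obstacle.} The crux is the exchange-roots identification underlying the geometric-lemma step: one must analyze the double-coset stratification $R_{\CO_{\ell_0}}(F)\backslash G_n^{\CO_{\ell_0}}(F)/Q(F)$, and along each orbit explicitly match the restricted unipotent character with a Bessel character $\psi_{\CO_{\ell_0+k}}$ on a well-defined enlarged unipotent subgroup of $G_n$, tracking $F$-rational structures on the orbits and $\GL$-exponents. The strict positivity failure must be converted into a concrete non-vanishing on a specific higher-index stratum. This combinatorial-geometric analysis for orthogonal groups, in the spirit of the Ginzburg-Rallis-Soudry descent constructions and the M\oe glin-Waldspurger treatment of Bessel models, is where the substantive technical work lies.
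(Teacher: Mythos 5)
Your approach diverges from the paper's in ways that create real gaps. The paper does \emph{not} proceed by Casselman's criterion and a geometric-lemma/Mackey analysis of $R_{\CO_{\ell_0}}\backslash G_n^{\CO_{\ell_0}}/Q$. Instead, square-integrability of every irreducible quotient of $\CD_{\CO_{\ell_0}}(\pi)$ is obtained directly from the M\oe glin--Waldspurger relation between multiplicity and parabolic induction (Proposition~\ref{pro:MW12}), which is used as a black box in Corollary~\ref{pp:gtq}: if an irreducible quotient $\sigma$ were a Langlands quotient of a standard module with a nontrivial $\GL$-factor, the identity $m(\pi,\sigma^\vee)=m(\pi_0,\sigma_0)$ would push the nonvanishing to a strictly smaller group $G_{n_0}^{\CO}$, i.e.\ to a strictly larger Bessel index, contradicting first occurrence. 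Your geometric-lemma plan is in spirit a re-derivation of that input; the hard part you defer to the ``main obstacle'' paragraph \emph{is} the theorem being cited, and nothing in your sketch explains why the open-orbit contribution is killed by the failure of strict positivity of the $\GL$-exponent. Flagging this as ``where the substantive work lies'' is not a proof.

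The admissibility step as you wrote it is incorrect. It is not true that the twisted Jacquet module $\CJ_{\CO_{\ell_0}}(\pi)$ of an irreducible $\pi$ is automatically finitely generated and admissible; what the paper proves (Lemma~\ref{lm:giq}(1), via Aizenbud--Avni--Gourevitch) is only that \emph{each Bernstein component} of $\CJ_{\CO_{\ell_0}}(\pi)$ is finitely generated. In fact the paper's own remark after Theorem~\ref{pp:dsds} notes that $\CD_{\CO_{\ell_0}}(\pi)$ may be a direct sum of \emph{infinitely many} irreducible square-integrable representations, so it is generally not of finite length. Admissibility is then deduced \emph{a posteriori}: once the direct-sum decomposition into square-integrable pieces in pairwise distinct Bernstein components is in hand, one observes that for a fixed compact open $K$ only finitely many square-integrable representations have $K$-fixed vectors. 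Your claim of finite length is false and cannot serve as a starting point.

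The distinct-Bernstein-components step is the genuine technical heart of the theorem, and your sketch does not supply a working argument. Two non-isomorphic irreducible summands in the same Bernstein component need not be quotients of the same induced representation $\Ind_{\bar Q}(\rho)$ — Bernstein components are inertial equivalence classes, not single supercuspidal supports — and even if they were, two independent morphisms to a common induced representation yield no contradiction with the multiplicity-one bound for Bessel functionals. The paper's proof instead uses the explicit descent of $L$-parameters (Theorem~\ref{pp:dsqlp}): every irreducible summand has parameter $\phi=\psi\boxplus\varphi_{\sgn}$ with $\psi$ ranging over a prescribed discrete set, two distinct summands have distinct $\psi$, hence distinct infinitesimal characters $\lambda_\phi$, hence by \cite{AMS15} lie in different Bernstein components. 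This input from the explicit local Langlands descent data is essential and is completely absent from your proposal; without it, the ``distinct Bernstein components'' and hence ``direct sum'' and hence ``admissible'' conclusions do not follow from anything you have written.
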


The proof of Theorem \ref{th:ldds} depends on the following weaker result about the $\ell$-th local descent for general $\ell$. Because it can be deduced
from the work on the local Gan-Gross-Prasad conjecture for orthogonal groups (\cite{GGP}) of Waldspurger (\cite{W10}, \cite{W12a} and \cite{W12b})
for tempered $L$-parameters, and of M\oe glin and Waldspurger (\cite{MW12}) with generic $L$-parameters, we state it as a corollary and refer to
Section \ref{sec-ITQ} for the details.

\begin{prop}[Irreducible Tempered Quotient]\label{th:itq}
For any $\pi\in\Pi(G_n)$ with a generic $L$-parameter, and
for any $\ell\in\{0,1,\cdots,\mathfrak{r}\}$ with $\mathfrak{r}$ being the
$F$-rank of $G_n$, if the twisted Jacquet module $\CJ_{\CO_\ell}(\pi)$ is nonzero, then $\CJ_{\CO_\ell}(\pi)$ has a tempered irreducible quotient
as a representation of $G_n^{\CO_\ell}(F)$, and so does the $\ell$-th local descent $\CD_{\CO_\ell}(\pi)$.
In other words, if $\CJ_{\CO_\ell}(\pi)\neq 0$, then there exists a $\sigma\in\Pi_\temp(G_n^{\CO_{\ell}})$ such that
$$
\Hom_{G_{n}^{\CO_\ell}(F)}(\CJ_{\CO_\ell}(\pi),\sigma)=\Hom_{G_{n}^{\CO_\ell}(F)}(\CD_{\CO_\ell}(\pi),\sigma)\neq 0.
$$
\end{prop}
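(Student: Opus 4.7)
The plan is to derive the proposition from the full local Gan-Gross-Prasad conjecture for $p$-adic orthogonal groups restricted to representations with generic $L$-parameters, which is now a theorem due to Waldspurger \cite{W10,W12a,W12b} in the tempered case and extended by M\oe glin-Waldspurger \cite{MW12} to all generic $L$-parameters. Although the statement of Proposition \ref{th:itq} only asserts the existence of some tempered irreducible quotient, the natural route is to pin down such a quotient via the epsilon-factor criterion.

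First, since $\CJ_{\CO_\ell}(\pi)$ is assumed non-zero and is a smooth representation of $G_n^{\CO_\ell}(F)$ whose Bessel $\Hom$-spaces against any irreducible admissible representation are at most one-dimensional by \eqref{lunq}, I would produce an irreducible quotient $\sigma\in\Pi(G_n^{\CO_\ell})$ of $\CJ_{\CO_\ell}(\pi)$. Its existence rests on the admissibility and finite-length properties of Bessel twisted Jacquet modules established in \cite{AGRS,SZ,GGP,JSZ}, so that
\[
\Hom_{G_n^{\CO_\ell}(F)}(\CJ_{\CO_\ell}(\pi),\sigma)\neq 0
\]
for some such $\sigma$. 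At this stage $\sigma$ need not be tempered.

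The heart of the argument is to upgrade $\sigma$ to a tempered representation. Writing $\sigma$ via the Langlands classification as the unique Langlands quotient $J(Q,\tau_s,\sigma^{\temp})$ of a standard module, with $\sigma^{\temp}$ tempered on a Levi subgroup of $G_n^{\CO_\ell}$, one applies the compatibility of Bessel multiplicities with parabolic induction in both arguments (Section \ref{ssec-MPI}) to transfer the non-vanishing to a Bessel pair in which the representation of the smaller group is tempered. Equivalently, one invokes the M\oe glin-Waldspurger epsilon-factor criterion directly: the non-vanishing of $\CJ_{\CO_\ell}(\pi)$ produces a generic-type $L$-parameter $\phi'$ on $G_n^{\CO_\ell}$ for which the joint epsilon condition is satisfied, and under the genericity hypothesis on the $L$-parameter of $\pi$ the explicit criterion singles out a tempered representative of the relevant joint Vogan packet realizing the Bessel model.

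The final assertion for $\CD_{\CO_\ell}(\pi)$ is formal: by the definition \eqref{msb} of $\CS_{\CO_\ell}(\pi)$ as the intersection of the kernels of all Bessel functionals, any $\sigma$ with a non-zero Bessel functional automatically descends to a non-zero $\Hom$ from $\CD_{\CO_\ell}(\pi)=\CQ_{\CO_\ell}(\pi)$. The main obstacle in this proof is the reduction to the tempered case: promoting a possibly non-tempered irreducible quotient to a tempered one requires both the parabolic-induction compatibility of Bessel models and the explicit epsilon-factor description of M\oe glin-Waldspurger, and this is precisely where the generic $L$-parameter hypothesis on $\pi$ is essential. The existence of some irreducible quotient and the passage from $\CJ_{\CO_\ell}(\pi)$ to $\CD_{\CO_\ell}(\pi)$ are, by contrast, formal consequences of the basic framework set up in Section \ref{ssec-LD}.
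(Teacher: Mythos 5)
Your proposal has the right broad outline but contains two genuine gaps, one of which is a substantive error.

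The first and more serious problem is your claimed source of an irreducible quotient. You assert that the existence of an irreducible quotient $\sigma$ of $\CJ_{\CO_\ell}(\pi)$ ``rests on the admissibility and finite-length properties of Bessel twisted Jacquet modules established in \cite{AGRS,SZ,GGP,JSZ}.'' Those references prove the multiplicity-one statement \eqref{lunq}; they say nothing about admissibility or finite length of the twisted Jacquet module, and in fact $\CJ_{\CO_\ell}(\pi)$ is in general neither admissible nor of finite length. Multiplicity one alone does not imply a nonzero smooth representation has any irreducible quotient at all. The actual ingredient the paper uses (Lemma \ref{lm:giq}) is that \emph{each Bernstein component} of $\CJ_{\CO_\ell}(\pi)$ is finitely generated, a result it deduces from the finite-generation theorem of Aizenbud--Avni--Gourevitch \cite{AAG} for spherical Schwartz spaces, and it is finite generation within a fixed Bernstein component, not admissibility, that yields an irreducible quotient. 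Your proof has no replacement for this step.

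The second gap is in the ``upgrade to tempered'' step. Applying Proposition \ref{pro:MW12} with $\sigma^\vee$ embedded in its standard module does let you transfer $m(\pi,\sigma)\neq 0$ to a nonvanishing multiplicity $m(\pi_0,\sigma_0)\neq 0$ with $\sigma_0$ tempered, but $\sigma_0$ lives on a \emph{smaller} group than $G_n^{\CO_\ell}$: the index goes up, not stays at $\ell$. You thus obtain a tempered quotient at some $\ell'\ge\ell$, not at $\ell$ itself. The statement of Proposition \ref{th:itq}, however, asks for a tempered irreducible quotient of $\CJ_{\CO_\ell}(\pi)$ for \emph{every} $\ell$ at which it is nonzero. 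The paper handles this with a stability argument (Proposition \ref{pp:sld}): from a tempered, in fact square-integrable, quotient $\sigma$ at the first occurrence index $\ell_0$, it constructs $\sigma_1 = \Ind_{P_{\ell_0-\ell}}\tau\otimes\sigma$ with $\tau$ a non-self-dual unitary supercuspidal of $\GL_{\ell_0-\ell}$, which is irreducible tempered, and invokes Proposition \ref{pro:MW12} again to conclude $m(\pi,\sigma_1^\vee)=m(\pi,\sigma)\neq 0$ at level $\ell$. This ``induce back up'' half of the argument is missing from your proposal; without it you only know the twisted Jacquet module has a tempered quotient at \emph{some} larger index, which does not directly give the claim for the given $\ell$.

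Your final remark, that the passage from $\CJ_{\CO_\ell}(\pi)$ to the maximal quotient $\CD_{\CO_\ell}(\pi)$ is formal from the definition \eqref{msb}, is correct.
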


It is worthwhile to mention an analogy of Proposition \ref{th:itq} to the {\sl Generic Summand Conjecture}
(Conjecture 2.3 in \cite{JZ-BF} and Conjecture 2.4 in \cite{JZ-AIT2}).
In such a generality, if the representation $\pi$ in Proposition \ref{th:itq} is unramified, then for any index $\ell$, the $\ell$-th
local descent $\CD_{\CO_\ell}(\pi)$ has a tempered, unramified irreducible quotient.
While the Generic Summand Conjecture (\cite{JZ-BF} and \cite{JZ-AIT2}) asserts that for any irreducible cuspidal automorphic representation of
$G_n$ with a generic global Arthur
parameter, its global descent at the first occurrence index is cuspidal and contains at least one irreducible summand that has a generic
global Arthur parameter. This assertion serves a base for the construction of explicit modules for irreducible cuspidal automorphic
representations of general classical groups with generic global Arthur parameters as developed in
\cite{JZ-BF} and \cite{JZ-AIT2}. We will discuss the global impact of the results obtained in this paper to the Generic Summand Conjecture in \cite{JZ-BF} and
\cite{JZ-AIT2} in our future work.

From the setup of the local descents, the theory is closely related to the local Gan-Gross-Prasad conjecture (\cite{GGP}).
By applying the theorems of M\oe glin and Waldspurger on the local Gan-Gross-Prasad conjecture for generic $L$-parameters, and
by explicit calculations of local $L$-parameters and the relevant local root numbers via the local Langlands correspondence in the
situation considered here, we are able to obtain the following explicit spectral decomposition for the local descent at the first occurrence
index. In order to state the result, we briefly explain the notation used in Theorem \ref{th:sdlp} below, and leave the details
in Sections \ref{sec-LGGP} and \ref{sec-MTHM}.

Write $\CZ:=F^\times/F^{\times 2}$. After fixing a rationality of the local Langlands correspondence $\iota_a$ as in Section \ref{ssec-rllc},
when a $\pi\in\Pi(G_n)$ is
determined by the parameter $(\varphi,\chi)$, the abelian group $\CZ$ acts on the dual $\wh{\CS}_\varphi$ of $\CS_\varphi$. Denote by
$\CO_\CZ(\pi)$ the $\CZ$-orbit in $\wh{\CS}_\varphi$ determined by $\pi$ (see \eqref{eq:CZ}). Denote by $[\varphi]_c$ the pair of the local
$L$-parameters which are conjugate to each other via an element $c$ in the complex group $\RO(V)(\BC)$ with $\det(c)=-1$.
In Definition \ref{df:foi-L}, we introduce
the notion of the descent $\FD_\ell(\varphi,\chi)$
and that of the first occurrence index $\ell_0=\ell_0(\varphi,\chi)$ for the local parameters
$(\varphi,\chi)$. The basic structure of the descent $\FD_{\ell_0}(\varphi,\chi)$ at the first occurrence index is given in Theorem \ref{pp:dsqlp}.

\begin{thm}[Spectral Decomposition]\label{th:sdlp}
Assume that $\pi\in\Pi(G_n(V_\Fn))$ is associated to an equivalence class $[\varphi]_c$ of generic $L$-parameters.
\begin{enumerate}
\item \label{thm:sdlp:part-1} The first occurrence index of $\pi$ is determined by the first occurrence index of the local parameters via the formula $$\ell_0(\pi)=\max_{\chi\in\CO_\CZ(\pi)}\{\ell_0([\varphi]_c,\chi)\}.
$$
\item \label{thm:sdlp:part-2}
For each $F$-rational orbit $\CO_{\ell_0}$,
the local descent of $\pi$ at the first occurrence index $\ell_0=\ell_0(\pi)$ is a multiplicity-free, direct sum of irreducible, square-integrable
representations of $G^{\CO_{\ell_0}}_n(F)$, which can be explicitly given below.
\begin{enumerate}
\item\label{item:main-thm-even} When $\Fn=2n$ is even and for $\chi \in\CO_\CZ(\pi)$
$$
\CD_{\CO_{\ell_0}}(\pi)=\bigoplus_{\phi\in\FD_{\ell_0}(\varphi,\chi)}\pi(\phi,\chi^\star_{\phi}(\varphi,\phi)),
$$
where the local Langlands correspondence $\iota_a$ for $\pi$ is given by $a=\disc(\CO_{\ell_0})$ and $\chi_a(\pi)=\chi$;
and the quadratic space $W$ defining $G^{\CO_{\ell_0}}_n$ is given by $\disc(W)=-\disc(\CO_{\ell_0})\cdot\disc(V_\Fn)$ and Equation \eqref{eq:thm:HW}.
\item \label{item:main-thm-odd} When $\Fn=2n+1$ is odd,
$$
\CD_{\CO_{\ell_0}}(\pi)=\bigoplus_{\substack{\phi\in\FD_{\ell_0}(\varphi,\chi)\\
\det(\phi)=\disc(\CO_{\ell_0})\cdot \disc(V_\Fn)}}\pi_{-\disc(\CO_{\ell_0})}(\phi,\chi^\star_{\phi}(\varphi,\phi)),
$$
where  the quadratic space defining $G^{\CO_{\ell_0}}_n$ is given by Equations \eqref{eq:thm:DV} and \eqref{eq:thm:HV}.
\end{enumerate}
\end{enumerate}
\end{thm}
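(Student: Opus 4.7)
The plan is to combine the structural result on the local descent from Theorem \ref{th:ldds} with the resolution of the local Gan-Gross-Prasad conjecture for generic $L$-parameters by M\oe glin and Waldspurger, and then to translate the resulting epsilon-dichotomy into the combinatorial description of the descent packet $\FD_\ell(\varphi,\chi)$ given in Definition \ref{df:foi-L}.

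For the first occurrence statement in part (1), I would argue as follows. By Proposition \ref{pp:iq}, the twisted Jacquet module $\CJ_{\CO_\ell}(\pi)$ is non-zero precisely when there exists some $\sigma\in\Pi(G_n^{\CO_\ell})$ with $\Hom_{G_n^{\CO_\ell}(F)}(\CJ_{\CO_\ell}(\pi),\sigma)\ne 0$, and Proposition \ref{th:itq} allows us to take such a $\sigma$ tempered. The local GGP for generic $L$-parameters then characterizes such $\sigma$ by the local root number/sign condition of \cite{GGP}. The definition of $\FD_\ell([\varphi]_c,\chi)$ is engineered so that $\phi\in\FD_\ell([\varphi]_c,\chi)$ encodes precisely this epsilon-dichotomy relative to the rationality choice $\chi$. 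Running over all $\chi$ in the orbit $\CO_\CZ(\pi)$ accounts for the different rationalities $\iota_a$ of the local Langlands correspondence, yielding the formula $\ell_0(\pi)=\max_{\chi\in\CO_\CZ(\pi)}\ell_0([\varphi]_c,\chi)$. The stability result Proposition \ref{pp:sl} guarantees that once non-vanishing occurs at some $\ell_1$, it persists for every $\ell\le\ell_1$ with a compatible $\CO_\ell$, so that the maximum really is attained at the first occurrence index.

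For part (2), Theorem \ref{th:ldds} already tells us that $\CD_{\CO_{\ell_0}}(\pi)$ is a multiplicity-free direct sum of irreducible square-integrable representations lying in distinct Bernstein components, so it suffices to enumerate the set of irreducible summands. Each summand $\sigma$ appears as a tempered quotient, and by the local GGP its $L$-parameter $\phi$ must satisfy the epsilon-factor compatibility with $(\varphi,\chi)$; combined with the minimality of $\ell_0$ and the basic structure of the descent at the first occurrence index from Theorem \ref{pp:dsqlp}, this forces $\phi\in\FD_{\ell_0}(\varphi,\chi)$. The associated character on the component group $\CS_\phi$ is read off from the explicit GGP epsilon formula, which after the normalization $\iota_a$ with $a=\disc(\CO_{\ell_0})$ becomes exactly $\chi^\star_\phi(\varphi,\phi)$. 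The dichotomy between parts (a) and (b) reflects whether $G_n^{\CO_{\ell_0}}$ is an even or odd orthogonal group, controlled by the dimension of $W_\ell$ in \eqref{eq:w}; in the odd-dimensional target case, the additional condition $\det(\phi)=\disc(\CO_{\ell_0})\cdot\disc(V_\Fn)$ pins down the discriminant of the target quadratic space.

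The main obstacle is the explicit computation of the local root numbers that identifies the character $\chi^\star_\phi(\varphi,\phi)$ on $\CS_\phi$, together with the verification that the resulting character matches the GGP prescription after all rationality and discriminant choices. Carefully tracking the interplay between the $\CZ$-action on $\wh\CS_\varphi$, the rationality $\iota_a$ prescribed by $\disc(\CO_{\ell_0})$, and the discriminant of the quadratic space $(W_\ell,q)$ defining $G_n^{\CO_{\ell_0}}$ is what drives the separate formulations of parts (a) and (b), and is the most delicate part of the argument.
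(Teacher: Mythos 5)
Your high-level strategy is the same as the paper's: use Theorem \ref{th:ldds} for the multiplicity-free square-integrable structure, the local GGP theorem to characterize irreducible quotients by local root numbers, and Theorem \ref{pp:dsqlp} to read off the descent packet $\FD_{\ell_0}(\varphi,\chi)$. But the ``delicate bookkeeping'' you flag at the end is not an obstacle to be hand-waved past; it is the actual content of parts (2a) and (2b), and your sketch stops exactly where the paper's argument begins. The missing ingredient is Theorem \ref{thm:W12b} (Waldspurger): it uniquely specifies, up to a common scalar, the pair of quadratic spaces $(V,W)$ via \eqref{eq:thm:DV}, \eqref{eq:thm:HW}, \eqref{eq:thm:HV}, and simultaneously prescribes the normalization $a=-\disc(W)\disc(V)$ of $\iota_a$ under which $m(\pi_a(\varphi,\chi^\star_\varphi),\pi_a(\phi,\chi^\star_\phi))=1$. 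The explicit data in the statement — $a=\disc(\CO_{\ell_0})$, $\disc(W)=-\disc(\CO_{\ell_0})\cdot\disc(V_\Fn)$ when $\Fn$ is even, and the extra constraint $\det(\phi)=\disc(\CO_{\ell_0})\cdot\disc(V_\Fn)$ when $\Fn$ is odd — are all read off from those three equations after matching the $F$-rational orbit $\CO_{\ell_0}$ to a discriminant via \eqref{eq:decomp} and Remark \ref{rmk:O-V-W}. Without this step, you cannot determine for which orbits the local descent is nonzero, nor which $\iota_a$-normalization produces the character $\chi^\star_\phi(\varphi,\phi)$ on the descent side. Citing the local GGP conjecture at the level of Theorem \ref{conj:smo} alone is not enough; you need Waldspurger's sharper form.

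Two smaller points. For part (1), Proposition \ref{pp:sl} is not what closes the argument: the identity $\ell_0(\pi)=\max_{\chi\in\CO_\CZ(\pi)}\ell_0([\varphi]_c,\chi)$ is proved by two inequalities, both via Theorem \ref{thm:W12b} — nonzero descent at $\ell_0(\pi)$ yields a $\sigma$ with $m(\pi,\sigma)\ne 0$, hence $\FD_{\ell_0(\pi)}(\varphi,\chi_a(\pi))\ne\emptyset$, giving $\ell_0(\pi)\leq\ell_0(\varphi,\chi_a(\pi))$; conversely, any $\phi\in\FD_{\ell_0^{\max}}(\varphi,\chi_0)$ produces a square-integrable $\sigma=\pi_a(\phi,\chi^\star_\phi)$ with $m(\pi,\sigma)=1$, giving $\ell_0(\pi)\geq\ell_0^{\max}$. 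Stability is needed elsewhere (Proposition \ref{th:itq}), not here. Second, the proof must first reduce to the discrete part $\varphi_\Box$: the paper does this in Proposition \ref{cor:genric-disc} via the equality $\FD_\ell(\varphi,\chi)=\FD_\ell(\varphi_\Box,\chi)$, which follows from $\chi^\star(\varphi,\phi)=\chi^\star(\varphi_\Box,\phi)$ and Corollary \ref{pp:gtq}. Your sketch does not address this reduction, and Theorem \ref{pp:dsqlp} is effectively a statement about the discrete part.
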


Theorems \ref{th:ldds} and \ref{th:sdlp} will be proved in Section \ref{sec-MTHM}, not only using the result of Proposition \ref{th:itq}, but also
using the proof of Proposition \ref{th:itq} in Section \ref{sec-ITQ}, with refinement. Moreover, in order to keep tracking the behavior of the
local $L$-parameters and the sign of the local root numbers with the local descents, we need explicit information about
the descent of local $L$-parameters $\FD_{\ell_0}(\varphi,\chi)$,  given in
Theorem \ref{pp:dsqlp}.

We note that it is possible that $\CD_{\CO_{\ell_0}}(\pi)=0$ for some $F$-rational orbit $\CO_{\ell_0}$.
But there exists at least one $F$-rational orbit $\CO_{\ell_0}$ such that $\CD_{\CO_{\ell_0}}(\pi)\ne 0$. Also an explicit formula for the
character $\chi^\star_{\phi}(\varphi,\phi)$ can be found in Corollary \ref{character}.

Some more comments on Theorem \ref{th:sdlp} are in order.

First of all, in terms of the local Gan-Gross-Prasad conjecture (\cite{GGP}), the spectral decomposition as given in Theorem \ref{th:sdlp} can
be interpreted as follows. For any $\pi\in\Pi(G_n)$ with a generic $L$-parameter, at the
first occurrence index, the spectral decomposition explicitly determines in terms of the local Langlands data of all possible irreducible
representations $\sigma$ of $G_n^{\CO_{\ell_0}}(F)$ that form the distinguished pair with the given $\pi$ as required by the local
Gan-Gross-Prasad conjecture. Meanwhile, this spectral decomposition indicates that for such a given $\pi$, if a $\sigma\in\Pi(G_n^{\CO_{\ell_0}})$
can be paired with $\pi$ as in the local Gan-Gross-Prasad conjecture, then $\sigma$ must be square integrable. Hence Theorem \ref{th:sdlp}
and Corollary \ref{character}
can be regarded as a refinement of the local Gan-Gross-Prasad conjecture.

Secondly,
it is interesting to compare briefly Theorem \ref{th:sdlp} with the local descent of the first named author and Soudry (\cite{JS-AM} and \cite{JS-AJM}, see also \cite{JNQ-PJM10}).
For instance, one takes $G_n^*$ to be the $F$-split $\SO_{2n+1}$. Let $\tau$ be an irreducible supercuspidal representation of $\GL_{2n}(F)$, which is
of symplectic type, i.e. the local exterior square $L$-function of $\tau$ has a pole at $s=0$. The local descent in \cite{JS-AM} and \cite{JNQ-PJM10}
is to take $\pi=\pi(\tau,2)$ to be the unique Langlands quotient of the induced representation of the $F$-split $\SO_{4n}(F)$ with supercuspidal
support $(\GL_{2n},\tau)$.
According to the endoscopic classification of Arthur (\cite{A13}), $\pi=\pi(\tau,2)$ has a non-generic (non-tempered) local Arthur parameter
$(\tau,1,2)$, and has the local $L$-parameter $\phi_\tau|\cdot|^{\frac{1}{2}}\oplus\phi_\tau|\cdot|^{-\frac{1}{2}}$, where $\phi_\tau$ is the
local $L$-parameter of $\tau$.
Now the local descent in \cite{JS-AM} and \cite{JNQ-PJM10} shows that $\CD_{n-1}(\pi)$ with $\ell_0=n-1$ being the first occurrence index is an irreducible
generic supercuspidal representation of $\SO_{2n+1}(F)$ with the generic local Arthur parameter $(\tau,1,1)$ or the local $L$-parameter $\phi_\tau$.
In this case, $\tau$ is the image of $\CD_{n-1}(\pi)$ under the local Langlands functorial transfer from $\SO_{2n+1}$ to $\GL_{2n}$.
The result is even simpler than
Theorem \ref{th:sdlp} as expected. However, from the point of view of the
local Gan-Gross-Prasad conjecture, the result in \cite{JS-AM} and \cite{JNQ-PJM10} can be viewed as a case of the local Gan-Gross-Prasad conjecture for non-tempered
local Arthur parameters. Hence the work to extend Theorem \ref{th:sdlp} to the representations with general local Arthur parameters is closely
related to the local Gan-Gross-Prasad conjecture for non-tempered local Arthur parameters. This is definitely a very interesting topic, but we
will not discuss it with any more details in this paper.

Finally, we would like to elaborate an application of Theorem \ref{th:sdlp}. For a generic local $L$-parameter $\phi$ of
an $F$-quasisplit $G_n^*$, the local $L$-packet $\Pi_\phi(G_n^*)$ as defined in \cite{MW12} contains a generic member, i.e. a member with a non-zero
Whittaker model. From the relation between unipotent orbits of $G_n^*(F)$ and the twisted Jacquet modules for $G_n^*(F)$. The Whittaker model corresponds
to the twisted Jacquet module associated to the regular unipotent orbit of $G_n^*$. In general, other members in the local $L$-packet $\Pi_\phi(G_n^*)$
may not have a non-zero twisted Jacquet module associated to the regular unipotent orbit.
Hence it is desirable to know that for any member $\pi$ in the local $L$-packet $\Pi_\phi(G_n^*)$, what kind twisted Jacquet modules does $\pi$ have?

Let $\CP(G_n^*)$ be the set of orthogonal partitions $\udl{p}=[p_1p_2\cdots p_r]$ associated to the $F$-stable unipotent orbits of $G_n^*(F)$. As defined in
\cite[Section 4]{J14} and similar to the definition of the twisted Jacquet modules of Bessel type, we may construct a twisted Jacquet module associated to
any $F$-rational unipotent orbit $\CO_{\udl{p}}$ in the corresponding $F$-stable orbit $\CO_{\udl{p}}^\st$.
More precisely, we may construct, for any $\CO_{\udl{p}}$ in $\CO_{\udl{p}}^\st$, an unipotent subgroup $V_{\CO_{\udl{p}}}$ of $G_n^*$ and a character
$\psi_{\CO_{\udl{p}}}$, and define the twisted Jacquet module $\CJ_{\CO_{\udl{p}}}(\pi)$ for any irreducible smooth representation $\pi$ of $G_n^*(F)$.
Now, we define
$\Fp(\pi)$ to be the subset of $\CP(G_n^*)$, consisting of partitions $\udl{p}$ with the property that there exists an $F$-rational $\CO_{\udl{p}}$ in
the $F$-stable $\CO_{\udl{p}}^\st$ such that the twisted Jacquet module $\CJ_{\CO_{\udl{p}}}(\pi)$ is nonzero. Let $\Fp^m(\pi)$ be the subset of
$\Fp(\pi)$ consisting of all maximal members in $\Fp(\pi)$. Following \cite{Kaw} and \cite{M96}, one may take $\Fp^m(\pi)$ to be the algebraic version of the
{\sl wave-front set} of $\pi$. It is generally believed (Conjecture 3.1 of \cite{JL-Cogdell})
that if $\pi$ is tempered, then the set $\Fp^m(\pi)$ contains only one partition. One expects that this property
holds for general $\pi$.
Assume that $\Fp^m(\pi)=\{\udl{p}=[p_1p_2\cdots p_r]\}$ with $p_1\geq p_2\geq\cdots\geq p_r>0$. In order to determine the algebraic wave-front set
$\Fp^m(\pi)$, it is an important step to understand how the largest part $p_1$ in the partition $\udl{p}\in\Fp^m(\pi)$ is determined by the local Langlands
data associated to $\pi$, via the local Langlands correspondence for $G_n^*(F)$. Here is our conjecture.
\begin{conj}\label{conj-p1}
Assume that $\pi\in\Pi_\phi(G_n^*)$ has a generic local $L$-parameter $\phi$.
Then the largest part $p_1$ in the partition $\udl{p}=[p_1p_2\cdots p_r]$ that belongs to $\Fp^m(\pi)$ is equal to $2\ell_0+1$,
where $\ell_0=\ell_0(\pi)$ is the first occurrence index in the local descents.
\end{conj}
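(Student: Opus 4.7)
The plan is to prove the equality $p_1 = 2\ell_0 + 1$ by establishing the two inequalities $p_1 \geq 2\ell_0 + 1$ and $p_1 \leq 2\ell_0 + 1$ separately.

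For the lower bound, since $\ell_0=\ell_0(\pi)$ is by definition the largest index with $\CJ_{\CO_{\ell_0}}(\pi) \neq 0$ for some $F$-rational orbit $\CO_{\ell_0}$, the Bessel partition $\udl{p}_{\ell_0} = [(2\ell_0+1)1^{\Fn-2\ell_0-1}]$ lies in $\Fp(\pi)$. I would invoke the downward closure of $\Fp(\pi)$ in the dominance order on partitions, a Moeglin--Waldspurger--type property of the algebraic wave-front set (refined to the $F$-rational level in our setting): every element of $\Fp^m(\pi)$ must dominate any given element of $\Fp(\pi)$, in particular $\udl{p}_{\ell_0}$. Under the (conjectural) uniqueness of $\Fp^m(\pi)=\{[p_1p_2\cdots p_r]\}$ referenced in the excerpt, this forces $p_1 \geq 2\ell_0+1$.

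For the upper bound, suppose $[p_1 p_2 \cdots p_r] \in \Fp^m(\pi)$ so that $\CJ_{\CO_{\udl{p}}}(\pi) \neq 0$ for some $F$-rational $\CO_{\udl{p}}$. The key step is to pass from this nonzero generalized twisted Jacquet module to a nonzero Bessel-type module $\CJ_{\CO_\ell}(\pi)$ with $\ell=(p_1-1)/2$, via a collapse/root-exchange argument in the spirit of Ginzburg--Rallis--Soudry or Gomez--Gourevitch--Sahi. Since $[p_1, 1^{\Fn-p_1}]$ is dominated by $[p_1 p_2 \cdots p_r]$ (the partial-sum inequalities being immediate), one obtains nonvanishing along $\udl{p}_{(p_1-1)/2}$ from nonvanishing along $\udl{p}$, provided $p_1$ is odd. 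This then forces $(p_1-1)/2 \leq \ell_0$, that is $p_1 \leq 2\ell_0+1$, contradicting the hypothesis unless equality holds.

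To close the argument I would need the parity step: that $p_1$ is odd for $\pi$ with a generic $L$-parameter. Here the natural input is Theorem \ref{th:ldds} together with the structure of the first occurrence index given in Theorem \ref{pp:dsqlp}, which constrain the stabilizer of the leading nilpotent orbit to admit a generic character, forcing the leading part to be of the form $2\ell+1$. Combining both inequalities with the parity then yields $p_1 = 2\ell_0+1$.

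The main obstacle will be the upper bound: in the $F$-rational Bessel setting one must carefully track which orbits $\CO_\ell$ arise by collapse from a given $\CO_{\udl{p}}$, and the delicate point is to ensure that the $F$-rational structure (the anisotropic vector $w_0$ defining $\psi_{\CO_\ell}$) may be chosen compatibly so that the nonvanishing descends from $\CJ_{\CO_{\udl{p}}}(\pi)$ to $\CJ_{\CO_\ell}(\pi)$. A secondary difficulty is the parity statement, which requires combining the explicit descent of $L$-parameters of Theorem \ref{pp:dsqlp} with a Moeglin-type analysis of maximal unipotent orbits supporting tempered distributions; a fully unconditional proof may well require dispensing with the uniqueness hypothesis on $\Fp^m(\pi)$ by producing, for each $\udl{p} \in \Fp^m(\pi)$, its own Bessel dominant.
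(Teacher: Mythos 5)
The statement you were asked to prove is Conjecture \ref{conj-p1}. The paper does \emph{not} prove it: it is stated as a conjecture, the authors only verify it for two special families of discrete $L$-parameters of $\SO_7^*$ in Section \ref{sec:cp1} (leaning there on the circumstance that the relevant partition poset for $\SO_7$ happens to be totally ordered), and they even exhibit a counterexample outside the generic-parameter hypothesis. So there is no proof in the paper to compare against, and any complete argument you produce would be genuinely new.

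That said, your outline goes in the direction the authors gesture at, and it does contain real gaps. For the lower bound $p_1\geq 2\ell_0+1$, your dominance argument needs two unproved inputs that you correctly flag: (i) the singleton property of $\Fp^m(\pi)$, which the paper only references as Conjecture 3.1 of \cite{JL-Cogdell}; and (ii) that $\Fp(\pi)$ is downward closed in the dominance order \emph{at the $F$-rational level} — the paper never establishes this, and it is nontrivial because a nonzero $\CJ_{\CO_{\ell_0}}$ for one $F$-rational orbit does not automatically hand you a compatible $F$-rational orbit in the dominating stratum. For the upper bound, the collapse/root-exchange step is the real obstruction, and the parity of $p_1$ is logically \emph{prior} to it rather than a side condition: to even form the Bessel partition $[p_1,1^{\Fn-p_1}]$ as a $G_n$-relevant orthogonal partition you need $p_1$ odd (if $p_1$ were even, the part $p_1$ would have odd multiplicity, which is not an orthogonal partition). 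Your proposed derivation of this parity from Theorems \ref{th:ldds} and \ref{pp:dsqlp} does not work as stated — those theorems concern the Bessel-type Jacquet modules $\CJ_{\CO_\ell}(\pi)$, i.e.\ partitions already of the shape $[(2\ell+1)1^{\cdots}]$, and say nothing about which general partitions $\udl{p}$ can support a nonzero $\CJ_{\CO_{\udl p}}(\pi)$. The $\SO_4^*$ example at the end of Section \ref{sec:cp1}, where $p_1$ is even, shows that the parity must come from the generic-$L$-parameter hypothesis in a way that neither the paper nor your sketch pins down. In short: the conjecture is open, the paper does not attempt a general proof, and your proposal, while a sensible road map, does not close the two gaps it itself identifies.
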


Assume that Conjecture \ref{conj-p1} holds. Then Part (1) of Theorem \ref{th:sdlp} asserts that the largest part $p_1$ of the partition $\udl{p}$
in the algebraic wave-front set $\Fp^m(\pi)$ of $\pi$ is completely determined by the
property of the local Langlands data associated to $\pi$, if $\pi$ has a generic local $L$-parameter.
From this point of view, Theorem \ref{th:sdlp} serves
as a base of an induction argument to determine the remaining parts $p_2,\cdots,p_r$. In Section \ref{sec:ex}, we will discuss Conjecture \ref{conj-p1}
via some examples. However, we expect that the induction argument is long and complicate,
and hence leave it to our future work.

\subsection{The structure of the proofs and the paper}
The local Gan-Gross-Prasad conjecture (\cite{GGP}) as proved in \cite{W10}, \cite{W12a}, \cite{W12b} and \cite{MW12} is the starting point
and the technical backbone of the discussion of this paper. We recall it in Section \ref{sec-LGGP}.  In order to understand the $F$-rationality
of the local $L$-parameters and the $F$-rationality of the local descents, we reformulate the local Langlands correspondence
and the local Gan-Gross-Prasad conjecture in terms of the basic rationality data given by the underlying quadratic
forms and quadratic spaces. This is discussed in Section \ref{ssec-rllc}. It is clear that one might formulate such a rationality in terms of
the notion of rigid inner forms as discussed by T. Kaletha in \cite{K16}. Due to the nature of the current paper, the authors thought that
it is more convenient and more direct to use the formulation in Section \ref{ssec-rllc}.
In addition to the local Langlands correspondence as
proved by Arthur in \cite{A13}, we need the result for even special orthogonal groups as discussed by H. Atobe and W. T. Gan in \cite{AG}.

We start to prove Proposition \ref{th:itq} in Section \ref{sec-ITQ}. First we show (Lemma \ref{lm:giq}) that for any $\pi\in\Pi(G_n)$,
if the twisted Jacquet module $\CJ_{\CO_\ell}(\pi)$ of Bessel type is nonzero, then it has an irreducible
quotient. Then by applying the relation of multiplicity with parabolic induction (Proposition \ref{pro:MW12} as proved by
M\oe glin and Waldspurger \cite{MW12}), we obtain (Corollary \ref{pp:gtq}) the relation of the first occurrence index with
parabolic induction and the result that for any $\pi\in\Pi(G_n)$ with generic $L$-parameters,
every irreducible quotient of the local descent at the first occurrence index  is square-integrable. It is clear that Corollary \ref{pp:gtq} is
one step towards Theorem \ref{th:ldds}. Finally, Proposition \ref{th:itq} follows from the stability of local descents (Proposition \ref{pp:sld})
and its proof.

In order to prove Theorems \ref{th:ldds} and \ref{th:sdlp}, we have to work on the local $L$-parameters. We define in Section \ref{sec-DLP}
the {\sl descent of local $L$-parameters}. In order to explicitly determine the structure of the descent of local $L$-parameters,
we first calculate explicitly the local epsilon factors associated to a local $L$-parameter or a pair of local
$L$-parameters, and keep tracking the local Langlands data through the process of local descents.
With help of the theorem of M\oe glin and Waldspurger on the
local Gan-Gross-Prasad conjecture for orthogonal groups with generic $L$-parameters (\cite{MW12}) and the explicit local Langlands
correspondence, we undertake a long tedious calculation of the characters that parameterize the distinguished pair of representations
as given by the local Gan-Gross-Prasad conjecture and determine the descent of the local $L$-parameters. The results are stated in
Theorem \ref{pp:dsqlp} that describes explicitly all the local $L$-parameters occurring the descent of the local $L$-parameter associated to
the given representation $\pi$ we start with. The point is that all the local $L$-parameters occurring in the descent of
local $L$-parameters are discrete local $L$-parameters.

With Theorem \ref{pp:dsqlp} at hand, we are able to determine in Section \ref{sec-MTHM}
the local $L$-parameters for all the
irreducible quotients of the local descent $\CD_{\CO_{\ell_0}}(\pi)$. We then use the structure of the Bernstein components of the
local descent to prove Theorem \ref{pp:dsds}
that the irreducible quotients of local descent $\CD_{\CO_{\ell_0}}(\pi)$  belong  to different Bernstein components and are square integrable.
Hence the local descent $\CD_{\CO_{\ell_0}}(\pi)$ is  a multiplicity free direct sum of irreducible square-integrable representations
and hence is square-integrable, which is Theorem \ref{th:ldds}.
Theorem \ref{th:sdlp} that
gives explicit spectral decomposition of the local descent $\CD_{\CO_{\ell_0}}(\pi)$ can now be deduced from Theorem \ref{th:ldds} and
Theorem \ref{pp:dsqlp}.

In Section \ref{sec:ex}, we provide even more explicit results for two special families of generic local
$L$-parameters. One of them is the so called {\sl the local cuspidal $L$-parameters}. The local $L$-packets attached to the local cuspidal $L$-parameters 
are determined by M\oe glin (see \cite{M11}, and also \cite{Xu17}), and they contain at least one supercuspidal member. 
We are going to follow the expression of
the local cuspidal $L$-parameters from \cite{AMS15} (where they are called the local cuspidal Langlands parameters).  
The other is the so called local discrete unipotent $L$-parameters, following J. Arthur (\cite{A89}). The local $L$-packets attached to 
the local discrete unipotent $L$-parameters are determined by M\oe glin (\cite{M96D}). As explained in \cite{M95}, 
they are the same as these classified by G. Lusztig (\cite{Lu95}). Moreover, we will discuss Conjecture \ref{conj-p1} via examples.

{\it Acknowledgements.}
The authors are grateful to Rapha\"el Beuzart-Plessis for his help related to the proof of Lemma \ref{lm:giq} (2), and
to Anne-Marie Aubert for her very helpful comments and suggestions on the first version of this paper, which
make the paper read more smoothly.
Part of the work in this paper
was done during a visit of the authors at the IHES in May, 2016. They would also  like to thank the IHES for hospitality and to thank
Michael Harris for his warm invitation. We would like to thank the referee for helpful comments.

\section{On the Local Gan-Gross-Prasad Conjecture}\label{sec-LGGP}

\subsection{Generic local $L$-parameters and local Vogan packets}\label{ssec-LVP}
We recall from \cite{MW12} the notion of the generic local $L$-parameters and their structures. Without the assumption of the
generalized Ramanujan conjecture, the localization of the generic global Arthur parameters (\cite{A13})
will be examples of the generic local $L$-parameters defined and discussed in \cite{MW12} for a $p$-adic local field $F$ of characteristic zero.
Following \cite{MW12}, we denote by ${\Phi}_\gen(G_n^*)$ the set of conjugate classes of the generic local $L$-parameters of $G_n^*$.
We may simply call $\phi\in{\Phi}_\gen(G_n^*)$ the {\sl generic $L$-parameters} since we only consider the local situation in this paper.

Now we recall from \cite{MW12} the definition of generic local $L$-parameters for orthogonal groups.
We denote by $\CW_F$ the local Weil group of $F$. The local Langlands group of $F$, which is denoted by
$\CL_F$, is equal to the local Weil-Deligne group $\CW_F\times\SL_2(\BC)$. The local $L$-parameters for $G_n^*(F)$ are of the form
\begin{equation}\label{llp}
\phi\ :\ \CL_{F}\rightarrow {^LG_n^*}
\end{equation}
with the property that the restriction of $\phi$ to the local Weil group $\CW_{F}$ is Frobenius semisimple and trivial on an open
subgroup of the inertia group $\CI_{F}$ of $F$, the restriction to $\SL_2(\BC)$ is algebraic, and $\phi$ is compatible with the
projections of $\CL_F$ and ${^LG_n^*}$ to the Weil group $\CW_F$ in the definition.
Then the local $L$-parameters have the following properties:
there exists a datum $(L^*,\phi^{L^*},\udl{\beta})$, for each given local $L$-parameter $\phi$, such that
\begin{enumerate}
\item $L^*$ is a Levi subgroup of $G^*_n(F)$ of the form
$$
L^*=\GL_{n_1}\times\cdots\times\GL_{n_t}\times G_{n_0}^*;
$$
\item $\phi^{L^*}$ is a local $L$-parameter of $L^*$ given by
$$
\phi^{L^*}:=\phi_1\oplus\cdots\oplus\phi_t\oplus\phi_0\ :\ \CL_{F}\rightarrow {^L\!L^*},
$$
where $\phi_j$ is a local tempered $L$-parameter of $\GL_{n_j}$ for $j=1,2,\cdots,t$, and $\phi_0$ is a local tempered $L$-parameter
of $G_{n_0}^*$;
\item $\udl{\beta}:=(\beta_1,\cdots,\beta_t)\in \BR^t$, such that $\beta_1>\beta_2>\cdots>\beta_t>0$; and
\item the parameter $\phi$ can be expressed as
$$
\phi=(\phi_1\otimes|\cdot|^{\beta_1}\oplus\phi_1^\vee\otimes|\cdot|^{-\beta_1})\oplus\cdots\oplus
(\phi_t\otimes|\cdot|^{\beta_t}\oplus\phi_t^\vee\otimes|\cdot|^{-\beta_t})\oplus\phi_0.
$$
\end{enumerate}
Following \cite{A13} and \cite{MW12}, the local $L$-packets can be formed for all $L$-parameters $\phi$ as displayed above, and
are denoted by ${\Pi}_{\phi}(G_n^*)$. Now a local $L$-parameter $\phi$ is called {\sl generic} if the associated local $L$-packet
${\Pi}_{\phi}(G_n^*)$ contains a generic member, i.e. a member with a non-zero Whittaker model with respect to a certain Whittaker data for $G_n^*$.
The set of all generic local $L$-parameters of $G_n^*$ is denoted by $\phi\in{\Phi}_\gen(G_n^*)$.
It was proved in \cite{MW12} that
all the members in such local $L$-packets are given by irreducible standard modules. Note that the situation here is more general
than that considered in \cite{A13} and hence the members in a generic local $L$-packet may not be unitary.
By \cite{MW12}, the local Gan-Gross-Prasad conjecture, which may be called the local GGP conjecture for short, holds for all
generic $L$-parameters $\phi\in{\Phi}_\gen(G_n^*)$.

Recall that an $F$-quasisplit special orthogonal group $G_n^*=\SO(V^*,q^*)$ and its pure inner $F$-forms $G_n=\SO(V,q)$
share the same $L$-group $^LG_n^*$.
As explained in \cite[Section 7]{GGP}, if the dimension $\Fn=\dim V=\dim V^*$ is odd,
one may take $\Sp_{\Fn-1}(\BC)$ to be the $L$-group $^LG_n^*$, and
if the dimension $\Fn=\dim V=\dim V^*$ is even, one may take $\RO_{\Fn}(\BC)$ to be $^LG_n^*$ when $\disc(V^*)$ is not a square in $F^\times$ and
take $\SO_{\Fn}(\BC)$ to be $^LG_n^*$ when $\disc(V^*)$ is a square in $F^\times$.
Let $S_\phi$ be the centralizer of the image of $\phi$ in $\SO_\Fn(\BC)$ or $\Sp_{\Fn-1}(\BC)$, and $S^\circ_\phi$ be its identity connected component group.
Define the component group $\CS_\phi:=S_\phi/S_\phi^\circ$, which is an abelian $2$-group.

By Theorem 1.5.1 of \cite{A13}, and its extension to the generic $L$-parameters in ${\Phi}_\gen(G_n^*)$ in \cite{MW12},
the local $L$-packets $\Pi_\phi(G_n^*)$ are of multiplicity free and there exists a bijection
\begin{equation}\label{bij}
\pi\ \mapsto\  <\cdot,\pi>=\chi_\pi(\cdot)=\chi(\cdot)
\end{equation}
between the finite set ${\Pi}_\phi(G_n^*)$ and the dual of $\CS_\phi/Z({}^LG_n^*)$ of the finite abelian $2$-group $\CS_\phi$ associated to $\phi$.
Following the Whittaker normalization of Arthur, the trivial character $\chi$ corresponds to a generic member in
the local $L$-packet ${\Pi}_\phi(G_n^*)$ with a chosen Whittaker character. There is an $F$-rationality issue on which the bijection may depend.
We will discuss this issue explicitly in Section \ref{ssec-rllc}.
Under the bijection in \eqref{bij}, we may write
\begin{equation}\label{eq-parameter}
\pi=\pi(\phi,\chi)
\end{equation}
in a unique way for each member $\pi\in{\Pi}_\phi(G_n^*)$ with $\chi=\chi_\pi$ as in \eqref{bij}.
For any pure inner $F$-forms $G_n$ of $G_n^*$, the same formulation works (\cite{A13}, \cite{K16}).
The local Vogan packet associated to any generic $L$-parameter $\phi\in{\Phi}_\gen(G_n^*)$ is defined to be
\begin{equation}\label{lvp}
{\Pi}_\phi[G_n^*]:=\cup_{G_n}{\Pi}_\phi(G_n)
\end{equation}
where $G_n$ runs over all pure inner $F$-forms of the given $F$-quasisplit $G_n^*$. The $L$-packet ${\Pi}_\phi(G_n)$ is defined to be
{\sl empty} if the parameter $\phi$ is not $G_n$-relevant.

According to the structure of the generic $L$-parameter $\phi\in{\Phi}_\gen(G_n^*)$, one may easily figure out the structure of the
abelian $2$-group $\CS_\phi$. Write
\begin{equation}\label{decomp}
\phi=\oplus_{i\in \RI}m_i\phi_i,
\end{equation}
which is the decomposition of $\phi$ into simple and generic ones.
The simple, generic local $L$-parameter $\phi_i$ can be written as $\rho_i\boxtimes\mu_{b_i}$,  where $\rho_i$ is an $a_i$-dimensional
irreducible representation of $\CW_F$ and $\mu_{b_i}$ is the irreducible representation of $\SL_2(\BC)$ of dimension $b_i$.
We denote by ${\Phi}_\sg(G^*_n)$ the set of all simple, generic local $L$-parameters of $G_n^*$.
In the decomposition \eqref{decomp},
$\phi_i$ is called {\sl of good parity} if $\phi_i\in{\Phi}_\sg(G^*_{n_i})$ with $G^*_{n_i}$ being the same type as $G^*_n$ where $n_i:=[\frac{a_ib_i}{2}]$.
We denote by $\RI_{\gp}$ the subset of
$\RI$ consisting of indexes $i$ such that $\phi_i$ is of good parity; and by $\RI_{\bp}$ the subset of
$\RI$ consisting of indexes $i$ such that $\phi_i$ is self-dual, but not of good parity. We set $\RI_{\nsd}:=\RI-(\RI_{\gp}\cup\RI_{\bp})$ for
the indexes of non-self-dual $\phi_i$'s.
Hence we may write $\phi\in{\Phi}_\gen(G_n^*)$ in the following more explicit way:
\begin{equation}\label{edec}
\phi=(\oplus_{i\in \RI_\gp}m_i\phi_i)\oplus(\oplus_{j\in \RI_\bp}2m'_j\phi_j)\oplus(\oplus_{k\in \RI_\nsd}m_k(\phi_k\oplus\phi_k^\vee)),
\end{equation}
where $2m'_j=m_j$ in \eqref{decomp} for $j\in \RI_{\bp}$.
According to this explicit decomposition, it is easy to know that
\begin{equation}\label{2grp}
\CS_\phi\cong\BZ_2^{\#\RI_\gp} \text{ or } \BZ_2^{\#\RI_\gp-1}.
\end{equation}
The latter case occurs if $G_n$ is even orthogonal, and some orthogonal summand $\phi_i$ for $i\in \RI_\gp$ has odd dimension.

In all cases, when $G_n$ is even or odd orthogonal, for any $\phi\in{\Phi}_\gen(G_n^*)$, we write elements of $\CS_\phi$ in the following form
\begin{equation}\label{eq:e-i}
(e_i)_{i\in \RI_{\gp}}\in \BZ_{2}^{\#\RI_{\gp}}, \text{ (or simply denoted by $(e_{i})$)}
\end{equation}
where each $e_i$ corresponds to $\phi_i$-component in the decomposition \eqref{edec} for $i\in \RI_\gp$.
The component group  ${\rm Cent}_{\RO_{\Fn}(\BC)}(\phi)/{\rm Cent}_{\RO_{\Fn}(\BC)}(\phi)^\circ$ is denoted by $A_\phi$.
Then $\CS_\phi$ consists of elements in $A_\phi$ with determinant $1$,
which is a subgroup of index $1$ or $2$.
Also write elements in $A_\phi$ of form
$(e_i)$ where $e_i\in\{0,1\}$ corresponds to the $\phi_i$-component in the decomposition \eqref{edec} for $i\in \RI_\gp$.
When $G_n$ is even orthogonal and some $\phi_i$ for $i\in \RI_\gp$ has odd dimension, then $(e_i)_{i\in \RI_{\gp}}$ is in $\CS_\phi$ if and only if $\sum_{i\in\RI_\gp}e_i\dim\phi_i$ is even.

An $L$-parameter $\phi$ is of orthogonal type (resp. symplectic type) if its image $\Im(\phi)$ lies in $\RO_{\Fn}(\BC)$ (resp. $\Sp_{2n}(\BC)$).
In this paper, a self-dual $L$-parameter refers to be of either  orthogonal type or symplectic type.
Let $\rho$ be an irreducible smooth representation of $\CW_F$,
which is  Frobenius semisimple   and trivial on an open subgroup of the inertia group $\CI_{F}$ of $F$.
Similarly, $\rho$ is  of orthogonal type (resp. symplectic type) if $\rho\boxtimes 1$ is  of orthogonal type (resp. symplectic type).
In this case, $\rho\boxtimes 1$ is a discrete $L$-parameter.

\subsection{Rationality and the local Langlands correspondence}\label{ssec-rllc}
As explained in Section \ref{ssec-MT}, one of our motivations is to study the algebraic version of the wave-front set $\Fp^m(\pi)$ of $\pi$ via this local descent method.
Our main approach is to perform an induction argument on the parts of partitions $\underline{p}$ in $\Fp^{m}(\pi)$.
In this inductive argument, the representations are descended to the ones of special orthogonal groups with different parity alternatively.
We need to keep tracking the $F$-rational nilpotent orbits $\CO_{\ell}$, which give the nonzero local descents.
Meanwhile, $\CO_{\ell}$ also determines the quadratic forms of the descendant special orthogonal groups.
On the other hand, one needs to fix a normalization of the Whittaker datum in order to fix the local Langlands correspondence.
Such a normalization depends also on the quadratic forms of the special orthogonal groups.
Hence, we will fix the normalization for the parent representations and track the $F$-rational forms $\CO_{\ell}$ for their descendants,
then the normalization for  their descendants will be determined.
In this sense, we refer those normalizations to be the $F$-rationality of the local Langlands correspondence for $G_n=\SO(V_\Fn)$ in terms of the $F$-rationality of the underlying quadratic space $(V_\Fn,q_\Fn)$.
When more quadratic spaces get involved in the discussion, we denote by $q_\Fn$ the quadratic form of $V_\Fn$,
which was simply denoted by $q$ before.

Define the discriminant of the quadratic space $V_\Fn$ by
$$
\disc(V_\Fn)= (-1)^{\frac{\Fn(\Fn-1)}{2}}\det(V_\Fn)\in F^\times/F^{\times 2}
$$
and, similar to \cite[Page 167]{Om}, define the Hasse invariant of $V_\Fn$ by
$$
\Hss(V_\Fn)=\prod_{1\leq i\leq j\leq \Fn}(\alpha_i,\alpha_j),
$$
where $V_\Fn$ is decomposed orthogonally as $Fv_1\oplus Fv_2\oplus\cdots \oplus Fv_\Fn$ with $q_\Fn(v_i,v_i)=\alpha_i\in F^\times$.
According to this definition, if $V_\Fn$ is decomposed orthogonally as $V_\Fn=W\oplus U$,
we have, by Remark 58:3 \cite{Om}, the following formulas:
\begin{align}
&\disc(V_{\Fn})=(-1)^{ab}\disc(W)\disc(U)\\
&\Hss(V_{\Fn})=\Hss(W)\Hss(U)((-1)^{\frac{a(a-1)}{2}}\disc(W),(-1)^{\frac{b(b-1)}{2}}\disc(U))
\label{eq:decomp-Hasse}
\end{align}
where $a=\dim W$ and $b=\dim U$.

Recall from Section \ref{ssec-LD} that for each $F$-rational orbit $\CO_\ell$ the non-degenerate character $\psi_{\CO_\ell}$ is given by $\psi_{\ell,w_0}$ defined in \eqref{eq:psi-ell} where $w_0$ is an anisotropic vector $w_0$.
Define
$$
\disc(\CO_\ell):=\disc(V^+_\ell\oplus Fw_0\oplus V^-_\ell)$$
and
$$
\Hss(\CO_\ell):=\Hss(V^+_\ell\oplus Fw_0\oplus V^-_\ell),
$$
where $V^\pm_{\ell}$ is defined in \eqref{eq:V-ell}.
Since the quadratic space $V^+_\ell\oplus Fw_0\oplus V^-_\ell$ is split, one has
\begin{eqnarray*}
\disc(\CO_\ell)&=&q_\Fn(w_0,w_0),\\
\Hss(\CO_\ell)&=&(-1,-1)^{\frac{\ell(\ell+1)}{2}}((-1)^{\ell+1},\disc(\CO_\ell)).
\end{eqnarray*}

Let $W_\ell$ be defined in \eqref{eq:w}. We have the decomposition
\begin{equation}\label{eq:decomp}
V_{\Fn}=(V^+_\ell\oplus Fw_0\oplus V^-_\ell)\oplus  W_\ell.
\end{equation}
Then one has
$\disc(W_\ell)=(-1)^{\Fn-1}\disc(V_\Fn)\disc(\CO_\ell)$,
and it is easy to check that
\begin{align}
 & \Hss(W_\ell) \label{eq:Hasse-W}\\
=&(-1,-1)^{\frac{\ell(\ell+1)}{2}}\Hss(V_{\Fn})
((-1)^{\ell}\disc(\CO_\ell),(-1)^{\frac{\Fn(\Fn-1)}{2}+\ell}\disc(V_{\Fn})). \nonumber
\end{align}
In order to fix the $F$-rationality of the local Langlands correspondence, we adopt the {\bf (QD) condition} of Waldspurger in \cite[Page 119]{W12b} for the even special orthogonal group $(V_\Fn,q_\Fn)$:
\begin{itemize}
	\item[($\QD$)] {\sl The special orthogonal group of $(V_\Fn,q_\Fn)\oplus (F,q_{0})$ is split,
where $(F,q_{0})$ is the one-dimensional quadratic space with $q_{0}(x,y)= -a\cdot xy$.}
\end{itemize}
Here $a\in \CZ$ will be specified later.
In \cite{W12b}, $a$ is denoted  $2\nu_0$.

\begin{lem}\label{lm:QD}
Assume that $\Fn$ is even.
Then $(V_\Fn,q_\Fn)$ satisfies $(\QD)$ if and only if
\begin{equation}\label{eq:QD}
\Hss(V_\Fn)=(-1,-1)^{\frac{n(n+1)}{2}}((-1)^{n}a,\disc(V_\Fn)),	
\end{equation}
where $n=\frac{\Fn}{2}$.
\end{lem}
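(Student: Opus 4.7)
The plan is to recast $(\QD)$ as an equality of Hasse invariants and then solve for $\Hss(V_\Fn)$ by inverting the decomposition formula \eqref{eq:decomp-Hasse}. Since $F$ is non-archimedean, any non-degenerate quadratic space of odd dimension $2n+1$ has anisotropic kernel of dimension $1$ or $3$, and its special orthogonal group is $F$-split precisely when the anisotropic kernel is one-dimensional, i.e.\ when the form is isometric to $\mathbb{H}^n\oplus\langle c\rangle$ for some $c\in F^\times$, where $\mathbb{H}$ is the hyperbolic plane. Any two $(2n+1)$-dimensional split quadratic spaces with the same discriminant are isometric, so $(\QD)$ is equivalent to
$$
\Hss\bigl(V_\Fn\oplus(F,q_0)\bigr)=\Hss\bigl(\mathbb{H}^n\oplus\langle d\rangle\bigr),\qquad d:=\disc\bigl(V_\Fn\oplus(F,q_0)\bigr).
$$

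First I would evaluate the right-hand side by diagonalizing $\mathbb{H}^n\oplus\langle c\rangle$ as $\langle 1,-1,1,-1,\dots,1,-1,c\rangle$ and applying the definition $\Hss=\prod_{i\leq j}(\alpha_i,\alpha_j)$ together with $(x,x)=(x,-1)$, which yields
$$
\Hss\bigl(\mathbb{H}^n\oplus\langle c\rangle\bigr)=(-1,-1)^{n(n+1)/2}(c,-1)^{n+1}.
$$
Next I would apply \eqref{eq:decomp-Hasse} with $W=V_\Fn$ and $U=\langle -a\rangle$, using $\disc(\langle -a\rangle)=-a$, $\Hss(\langle -a\rangle)=(-a,-1)$, and $(-1)^{\Fn(\Fn-1)/2}=(-1)^n$, to obtain
$$
\disc\bigl(V_\Fn\oplus(F,q_0)\bigr)=-a\cdot\disc(V_\Fn),
$$
$$
\Hss\bigl(V_\Fn\oplus(F,q_0)\bigr)=\Hss(V_\Fn)\cdot(-a,-1)\cdot\bigl((-1)^n\disc(V_\Fn),\,-a\bigr).
$$

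Setting the two expressions equal and solving for $\Hss(V_\Fn)$ (using that Hilbert symbols are $\pm 1$, so they equal their own inverses) reduces the claim to the purely combinatorial identity
$$
(-a\disc(V_\Fn),-1)^{n+1}\,(-a,-1)\,\bigl((-1)^n\disc(V_\Fn),\,-a\bigr)=\bigl((-1)^n a,\disc(V_\Fn)\bigr).
$$
I would verify this by splitting on the parity of $n$, so that $(n+1)$ flips parity: when $n$ is even, the factor $(-a\disc(V_\Fn),-1)^{n+1}$ expands as $(-a,-1)(\disc(V_\Fn),-1)$, whose first factor cancels the isolated $(-a,-1)$ and whose second factor combines with $(\disc(V_\Fn),-a)$ to leave $(\disc(V_\Fn),a)$; when $n$ is odd, $(-a\disc(V_\Fn),-1)^{n+1}$ is trivial while $(-a,-1)(-1,-a)=1$ by bilinearity of the Hilbert symbol, leaving $(\disc(V_\Fn),-a)$. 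Both cases produce the stated formula. The only real obstacle is the bookkeeping of signs through these Hilbert-symbol manipulations, which is mechanical once the correct parities are recorded.
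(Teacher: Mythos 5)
Your proof is correct and follows essentially the same route as the paper's: recast $(\QD)$ as an equality of Hasse invariants of $V_\Fn\oplus(F,q_0)$ against a split form of the same discriminant, evaluate both sides via the decomposition formula \eqref{eq:decomp-Hasse} (or, as you do for the split side, by direct expansion of the definition), and simplify the Hilbert-symbol identity. The paper carries out the final simplification in one stroke rather than by parity cases on $n$, but the computation is the same; I checked your two-case bookkeeping and it is right.
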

\begin{proof}
Write $V'=(V_\Fn,q_\Fn)\oplus (F,q_{0})$.
Since
$\disc(V')=-a\cdot \disc(V_\Fn)$, we deduce
\begin{align}
\Hss(V')= & \Hss(V_{\Fn})((-1)^{n}\disc(V_{\Fn}),-a)(-a,-a) \nonumber\\
=&\Hss(V_{\Fn})((-1)^{n+1}\disc(V_{\Fn}),-a).\label{eq:lm:QD-1}
\end{align}

Note that
$\SO(V')$ is split if and only if
$V'$ is isometric to the quadratic space
$\BH^n\oplus Fv_0$ for some $v_0$,
where $\BH$ is a hyperbolic plane, equivalently $q_\Fn(v_0,v_0)=\disc(V')$ and $\Hss(V')=\Hss(\BH^n\oplus Fv_0)$.
Under the assumption that $q_\Fn(v_0,v_0)=\disc(V')$,
by \eqref{eq:decomp-Hasse}, we have
\begin{align}
\Hss(\BH^n\oplus Fv_0)= & \Hss(\BH^n)((-1)^n,\disc(V')) (\disc(V'),\disc(V'))\nonumber\\
=&(-1,-1)^{\frac{n(n+1)}{2}}(-1,\disc(V'))^{ n+1 }.\label{eq:lm:QD-2}
\end{align}
$\SO(V')$ is split if and only if \eqref{eq:lm:QD-1} equals \eqref{eq:lm:QD-2}.
By using the relation that $\disc(V')=-a\cdot \disc(V_\Fn)$ and by simplifying the equality, we obtain this lemma.
\end{proof}

For example, suppose that $\dim V^*_{0}=2$, that is, $\SO(V^*_\Fn)$ is a quasi-split and non-split even special orthogonal group, whose pure inner form $\SO(V_\Fn)$ satisfies $\disc(V_\Fn)=\disc(V^*_\Fn)$ and $\Hss(V_\Fn)=-\Hss(V^*_\Fn)$.
Note that $\SO(V_\Fn)$ and $\SO(V^*_\Fn)$ are $F$-isomorphic.
Recall that ${\Pi}_\phi[G_n^*]$ is a generic Vogan packet of $\SO(V^*_\Fn)$.
The issue is  which orthogonal group  shall be assigned to $\chi\in\widehat{\CS}_\phi$ with $\chi((1))=-1$ (see \cite[Section 10]{GGP}).
We fix the $F$-rationality of the local Langlands correspondence following \cite[Section 4.6]{W12b}. This means that
for $\pi(\phi,\chi)\in {\Pi}_\phi[G_n^*]$ of $\SO(V')$ where $V'\in\{V_\Fn,V^*_\Fn\}$,
the quadratic space $V'$ is determined by \eqref{eq:QD}.

For the even special orthogonal groups, the local Langlands correspondence needs more explanation.
Define $c$ to be an element in $\RO(V_\Fn)\smallsetminus \SO(V_\Fn)$ with $\det(c)=-1$.
For instance, when $\Fn$ is odd, we can take $c=-I_\Fn$.
Consider the conjugate action of $c$ on $\Pi(G_n)$, which arises an equivalence relation $\sim_c$ on $\Pi(G_n)$.
Obviously, when $\Fn$ is odd, the $c$-conjugation is trivial.
We only discuss the even special orthogonal case here.
Denote $\Pi(G_n)/\sim_c$ to be the set of equivalence classes.
For $\sig\in\Pi(G_n)$, denote $[\sig]_c$ to be the equivalence class of $\sig$.
Similarly, one has an analogous equivalence relation on the set $\Phi(G_n)$ of all $L$-parameters of $G_n$, which is also denoted by $\sim_c$.

Let us recall the desiderata of the weak local Langlands correspondence for even special orthogonal groups $\SO(V_\Fn)$ from
\cite[Desideratum 3.2]{AG}.
For the needs of this paper, we only recall some partial facts from their desiderata, which has been verified in \cite{AG},
in order to fix the rationality of the local Langlands correspondence.

{\bf A Weak Local Langlands Correspondence for $G_n=\SO(V_{2n})$:}
\begin{enumerate}
	\item There exists a canonical surjection
	$$
	\bigsqcup_{G_n}\Pi(G_n)/\sim_c\   \longrightarrow \  \Phi(G^*_n)/\sim_c
	$$
	where $G_n$ runs over all pure inner forms of $G^*_n$.
	Note that the preimage of $\phi$ under the above map  is the Vogan packet ${\Pi}_\phi[G^*_n]$ associated to $\phi$.
	\item Let $\Phi^c(G^*_n)/\sim_c$ be the subset of $\Phi(G^*_n)/\sim_c$ consisting of $\phi$ which contains an irreducible orthogonal subrepresentation of $\CL_F$ with odd dimension.
	The following are equivalent:
	\begin{itemize}
		\item $\phi\in \Phi^c(G^*_n)/\sim_c$;
		\item some $[\sig]_c\in \Pi_\phi[G^*_n]$ satisfies that $\sig\circ\Ad(c)\cong \sig$;
		\item all $[\sig]_c\in \Pi_\phi[G^*_n]$ satisfy that $\sig\circ\Ad(c)\cong \sig$.
	\end{itemize}
	\item For each $a\in \CZ$, there exists a bijection
	$$
	\iota_a\colon {\Pi}_\phi[G^*_n]\longrightarrow \widehat{\CS_\phi},
	$$
	which satisfies the endoscopic and twisted endoscopic character identities (refer to \cite{A13} and \cite{K16} for instance).
	\item For $[\sig]_c\in {\Pi}_\phi[G^*_n]$ and $a\in\CZ$, the following are equivalent:
	\begin{itemize}
	 	\item $\sig\in \Pi(\SO(V_{2n}))$;
	 	\item
	 	$\iota_{a}([\sig]_c)((1))$ and $\Hss(V_{2n})$ satisfy the following equation
	 	\begin{equation}\label{eq:LLC-normalization}
		\Hss(V_{2n})=\iota_{a}([\sig]_c)((1))(-1,-1)^{\frac{n(n+1)}{2}}((-1)^n a,\disc(V_{2n})).	
		\end{equation}
	 \end{itemize}
\end{enumerate}
Note that the subscript $a$ in $\iota_a$ is used to indicate the $F$-rationality of the Whittaker datum.
The details can be found in \cite[Section 3]{AG}.

By the above weak local Langlands correspondence for $\SO(V_{2n})$ and the local Langlands correspondence for $\SO(V_{2n+1})$,
each irreducible admissible representation $\pi\in\Pi(G_n)$
is associated to an equivalence class $[\phi]_c$ of $L$-parameters under the $c$-conjugation.
Following \cite[Sections 9 and 10]{GGP} and \cite[Proposition 3.5]{AG}, define the action of $\CZ$  on $\widehat{\CS}_\phi$ via an one-dimensional twist given by the local  Langlands correspondence $\iota_{a}$, which is
$$
a\cdot \chi\to \chi\otimes \eta_a
$$
where $\eta_a((e_i)_{i\in \RI_\gp})=(\det\phi_i,a)^{e_i}_F$ and $(\cdot,\cdot)_F$ is the Hilbert symbol defined over $F$.
Denote  by $\CO_{\CZ}(\pi)$ the orbit in $\widehat{\CS}_\phi$ corresponding to $\pi$.
More precisely, if $\pi=\pi(\phi,\chi)$ under the local Langlands correspondence $\iota_a$ for some $a$, one has
\begin{equation}\label{eq:CZ}
\CO_{\CZ}(\pi)=\{\chi\otimes\eta_\alpha\in \widehat{\CS}_\phi\colon \alpha\in \CZ\}.
\end{equation}
Note that the set $\CO_{\CZ}(\pi)$
is uniquely determined by  $\pi$ and independent of the choice of the local Langlands correspondence $\iota_a$.

{\it In the rest of this paper, the local Langlands correspondence is referred to the weak local Langlands correspondence for even special orthogonal groups $\SO(V_{2n})$,
and the local Langlands correspondence for odd special orthogonal groups $\SO(V_{2n+1})$.}

Under the local Langlands correspondence $\iota_a$ of $G_n(F)$ with some $a\in \CZ$, for an $L$-parameter $\phi$ and a character $\chi\in\widehat{\CS}_\phi$, denote by $\pi_a(\phi,\chi)$ the corresponding irreducible admissible representation of $G_n(F)$.
Conversely, given an irreducible admissible representation $\pi$ of $G_n(F)$,
denote by $\phi_a(\pi)$ and $\chi_a(\pi)$ the associated $L$-parameter and its corresponding character in $\widehat{\CS}_\phi$, respectively.

When $G_n=\SO(V_{2n+1})$, the local Langlands correspondence is unique and independent with the choice of $a$.
We may denote the action of $\CZ$ to be the trivial action and then $\CO_\CZ(\pi)$ contains only $\pi$, and
simply write $\pi(\phi,\chi)$ and $(\phi(\pi),\chi(\pi))$, respectively.

\begin{rmk}
Let $\phi$ be an $L$-parameter of $\SO(V_{2n})$.
Suppose that all irreducible orthogonal summands of $\phi$ are even dimensional. Then the $c$-conjugate  $L$-parameter $\phi^c$ is different from
$\phi$ because $\phi^c$ is not $G^\vee_n(\BC)=\SO_{2n}(\BC)$-conjugate to $\phi$. It follows that
$\Pi_\phi[G_n^*]$ and $\Pi_{\phi^c}[G_n^*]$ are two different Vogan packets.
However, the conjugation $\Ad(c)\colon \pi_a(\phi,\chi)\mapsto\pi_a(\phi^c,\chi)$ gives a bijection between $\Pi_\phi[G_n^*]$
and $\Pi_{\phi^c}[G_n^*]$.
According to \cite[Section 3]{AG}, the $c$-conjugation stabilizes the Whittaker datum associated to $\iota_a$.
Thus, the corresponding characters of $\CS_\phi$ associated to  $\pi$ and $\pi\circ\Ad(c)$ under the same local Langlands correspondence are identical.
\end{rmk}

\subsection{On the local GGP conjecture: multiplicity one}\label{ssec-LGGP}
The local GGP conjecture  was explicitly formulated in \cite{GGP} for general classical groups.
We recall the case of orthogonal groups here.

Let $\Fn$ and $\Fm$ be two positive integers with different parity.
For a relevant pair $G_n=\SO(V,q_V)$ and $H_m=\SO(W,q_W)$, and an $F$-quasisplit relevant pair $G_n^*=\SO(V^*,q^*_V)$ and $H_m^*=\SO(W^*,q^*_W)$
in the sense of \cite{GGP}, where $m=[\frac{\Fm}{2}]$ with $\Fm=\dim W=\dim W^*$,
we are going to discuss the local $L$-parameters for the group $G_n^*\times H_m^*$ and its relevant pure inner $F$-form
$G_n\times H_m$.
Consider admissible group homomorphisms:
\begin{equation}\label{llpgh}
\phi\ :\ \CL_F=\CW_F\times\SL_2(\BC)\rightarrow {^LG_n^*}\times{^LH_m^*},
\end{equation}
with the properties as described for the local $L$-parameters in \eqref{llp}. We consider those $L$-parameters analogous to $\Phi_\gen(G_n^*)$, and
denote the set of those $L$-parameters by ${\Phi}_\gen(G_n^*\times H_m^*)$.
To each parameter $\phi\in {\Phi}_\gen(G_n^*\times H_m^*)$, one defines the associated local $L$-packet
${\Pi}_\phi(G_n^*\times H_m^*)$, as in \cite{MW12}.
For any relevant pure inner $F$-form $G_n\times H_m$, if a parameter
$\phi\in {\Phi}_\gen(G_n^*\times H_m^*)$
is $G_n\times H_m$-relevant, it defines a local $L$-packet
${\Pi}_\phi(G_n\times H_m)$, following \cite{A13} and \cite{MW12}.
If a parameter $\phi\in {\Phi}_\gen(G_n^*\times H_m^*)$
is not $G_n\times H_m$-relevant,
the corresponding local $L$-packet ${\Pi}_\phi(G_n\times H_m)$ is defined to be the empty set.
The local Vogan packet associated to a parameter $\phi\in {\Phi}_\gen(G_n^*\times H_m^*)$
is defined to be the union of the local $L$-packets
${\Pi}_\phi(G_n\times H_m)$
over all relevant pure inner $F$-forms $G_n\times H_m$ of the $F$-quasisplit group $G_n^*\times H_m^*$, which is denoted by
$$
{\Pi}_\phi[G_n^*\times H_m^*].
$$

The local GGP conjecture (\cite{GGP}) is formulated in terms of the local Bessel functionals as introduced in Section
\ref{ssec-LD}. For a given relevant pair $(G_n,H_m)$, assuming that $\Fn\geq \Fm$, take a partition of the form:
$$
\udl{p}_\ell:=[(2\ell+1)1^{{\Fn-2\ell-1}}],
$$
where $2\ell+1=\dim W^\perp=\Fn-\Fm$. As in Section \ref{ssec-LD},
the $F$-stable nilpotent orbit $\CO_{\udl{p}_\ell}^\st$ corresponding to the partition $\udl{p}_\ell$ defines a unipotent subgroup
$V_{\udl{p}_\ell}$ and a generic character $\psi_{\CO_\ell}$ associated to any $F$-rational orbit $\CO_\ell$ in the $F$-stable orbit $\CO_{\udl{p}_\ell}^\st$.
Following from \cite{JZ-BF}, there is an $F$-rational orbit $\CO_\ell$ in the $F$-stable orbit
$\CO_{\udl{p}_\ell}^\st$  such
that the subgroup $H_m=G_{n}^{\CO_\ell}$ normalizes the unipotent subgroup $V_{\udl{p}_\ell}$ and stabilizes the character $\psi_{\CO_\ell}$.
As in Section \ref{ssec-LD} again, the uniqueness of local Bessel functionals asserts that
$$
\dim \Hom_{R_{\CO_\ell}(F)}(\pi\otimes\sigma^\vee,\psi_{\CO_\ell})\leq 1,
$$
as proved in \cite{AGRS}, \cite{SZ}, \cite{GGP}, and \cite{JSZ}. The stronger version in terms of Vogan packets
is formulated as follows.

\begin{thm}[M\oe glin-Waldspurger]\label{conj:smo}
Let $G_n^*$ and $H_m^*$ be a relevant pair as given above.
For any local $L$-parameter $\phi\in {\Phi}_\gen(G_n^*\times H_m^*)$, the following identity holds:
\begin{equation}\label{smo}
\sum_{\pi\otimes\sigma\in \Pi_\phi[G_n^*\times H_m^*]}
\dim \Hom_{R_{\CO_\ell}(F)}(\pi\otimes\sigma,\psi_{\CO_\ell})=1.
\end{equation}
\end{thm}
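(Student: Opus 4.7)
The plan is to reduce the multiplicity-one identity for a generic parameter to the tempered case, for which it is Waldspurger's theorem (\cite{W10,W12a,W12b}), by propagating it through the Langlands classification of generic $L$-parameters recalled in Section \ref{ssec-LVP}. To begin, I would decompose $\phi \in {\Phi}_\gen(G_n^*\times H_m^*)$ according to the datum $(L^*,\phi^{L^*},\underline{\beta})$ of Section \ref{ssec-LVP}, isolating on each factor a tempered piece $\phi_0$ on a relevant sub-pair $(G_{n_0}^*,H_{m_0}^*)$ together with a $\GL$-Levi piece twisted by the strictly positive exponents $\underline{\beta}$. By the irreducibility of generic standard modules from \cite{MW12} (quoted in Section \ref{ssec-LVP}), every $\pi\otimes\sigma \in {\Pi}_\phi[G_n^*\times H_m^*]$ is realized as the full irreducible standard module parabolically induced from a unique tempered $\pi_0\otimes\sigma_0 \in {\Pi}_{\phi_0}[G_{n_0}^*\times H_{m_0}^*]$, and the parametrization by the component group satisfies $\CS_\phi = \CS_{\phi_0}$ with the indexing characters matching.

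The main technical step is the compatibility
$$\dim \Hom_{R_{\CO_\ell}(F)}(\pi\otimes\sigma,\psi_{\CO_\ell}) = \dim \Hom_{R_{\CO_{\ell_0}}(F)}(\pi_0\otimes\sigma_0,\psi_{\CO_{\ell_0}}),$$
expressing that Bessel multiplicities are preserved under the passage to the tempered inducing datum. To establish it I would unfold the Bessel Jacquet module of the induced representation via the Bernstein--Zelevinsky geometric lemma, stratify the resulting filtration by $R_{\CO_\ell}(F)$-double cosets on the underlying flag variety, and show that only the open orbit contributes. Vanishing on every closed stratum is forced by the strict positivity of the exponents $\underline{\beta}$ combined with Casselman's asymptotic criterion applied to the tempered cuspidal support of $\pi_0\otimes\sigma_0$: on such a stratum the associated unipotent integrals pin the functional against a matrix-coefficient estimate that it cannot satisfy.

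Once this compatibility is in hand, the sum in \eqref{smo} collapses term-by-term to the analogous sum taken over ${\Pi}_{\phi_0}[G_{n_0}^*\times H_{m_0}^*]$, and Waldspurger's multiplicity-one theorem for tempered parameters yields the value $1$. The principal obstacle will be controlling the geometric lemma: the induced Bessel module sits in a long filtration indexed by double cosets, and verifying the vanishing of every non-open layer requires a delicate combination of twisted Jacquet-module computations, unipotent integration estimates, and asymptotic bounds on tempered matrix coefficients, which together form the technical core of \cite{MW12}.
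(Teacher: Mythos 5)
The paper does not prove this statement. Immediately after the theorem the authors write that it ``was proved by Waldspurger (\cite{W10}, \cite{W12a} and \cite{W12b}) for tempered local $L$-parameters, and by M\oe glin and Waldspurger (\cite{MW12}) for generic local $L$-parameters''; Theorem~\ref{conj:smo} is an imported result, not something argued in this paper, so there is no internal proof to compare your attempt against.

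What can be said is that your sketch correctly identifies the shape of the M\oe glin--Waldspurger reduction. The compatibility you single out as the main technical step — that the Bessel multiplicity is unchanged when a member of a generic $L$-packet (an irreducible standard module) is replaced by the tempered datum of its Langlands classification — is exactly the statement the authors do quote and use later as Proposition~\ref{pro:MW12} (Proposition~(1.3) of \cite{MW12}), and once that plus the identification $\CS_\phi\cong\CS_{\phi_0}$ are granted, the Vogan-packet sum in \eqref{smo} collapses term-by-term to Waldspurger's tempered multiplicity-one theorem, as you say. The caveat is that your proposed proof of that compatibility — geometric-lemma stratification plus Casselman asymptotics forcing non-open strata to vanish — is compressed to the point of being only half an argument: vanishing on the non-open orbits yields, at best, a one-sided inequality between $m(\pi,\sigma^\vee)$ and $m(\pi_0,\sigma_0)$, depending on whether the open stratum appears as a sub or a quotient of the filtered twisted Jacquet module, and establishing the reverse inequality (lifting a nonzero Bessel functional on the tempered datum to the full induced representation) requires a separate argument. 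This, together with the careful bookkeeping of relevant pure inner forms and $F$-rational orbits across the reduction, constitutes the bulk of \cite{MW12} and is hidden inside the phrase ``show that only the open orbit contributes.''
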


Theorem \ref{conj:smo} is the orthogonal group case of the general local GGP conjecture (\cite{GGP}) over $p$-adic local fields.
It was proved
by Waldspurger (\cite{W10}, \cite{W12a} and \cite{W12b}) for tempered local $L$-parameters, and by M\oe glin and Waldspurger (\cite{MW12}) for generic local $L$-parameters.

\subsection{On multiplicity and parabolic induction}\label{ssec-MPI}
This is to discuss the relation between the multiplicity in the local GGP conjecture and the parabolic induction as given in the
work of M\oe glin and Waldspurger in a series of papers \cite{W10,MW12,W12a,W12b} for orthogonal groups.


Let $\phi$ and $\varphi$ be generic $L$-parameters of different type and be of even dimension.
Denote $\pi^{\GL}(\phi)$ and $\pi^{\GL}(\varphi)$ to be the two irreducible representations of general linear groups via the local Langlands functoriality, which is independent with the choice of $\varphi$ in $[\varphi]_c$.
Define
\begin{equation}\label{eq:CE}
\CE(\varphi,\phi)=\det(\varphi)(-1)^{n}\cdot \varepsilon(\frac{1}{2},\pi^{\GL}(\varphi)\times \pi^{\GL}(\phi),\psi_F),	
\end{equation}
where $\varepsilon(s,\cdot,\psi_F)$ is the $\varepsilon$-factor defined by H. Jacquet, I. Piatetski-Shapiro and J. Shalika in \cite{JPSS}.
Recall that $\det(\varphi)(-1)=(\det(\varphi),-1)_F$ and $(\cdot,\cdot)_F$ is the Hilbert symbol defined over $F$.
Decompose $\varphi$ and $\phi$ as \eqref{edec}, whose index sets are denoted by $\RI^\phi_{\rm gp}$ and $\RI^\varphi_{\rm gp}$ respectively,
and
define the character $\chi^\star(\phi,\varphi)$ to be the pair $(\chi^\star_\phi(\phi,\varphi),\chi^\star_\varphi(\phi,\varphi))$ (or simply $(\chi^\star_\phi, \chi^\star_{\varphi})$), where
\begin{equation}\label{eq:dist-eps-odd-W}
	{\bf \chi^\star_\phi}((e_{i'})_{i'\in \RI^{\phi}_{\gp}})=\prod_{i'\in \RI^{\phi}_{\gp}}\CE(\varphi,\phi_{i'})^{e_{i'}}
\end{equation}
and
\begin{equation}\label{eq:dist-eps-even-W}
	{\bf \chi^\star_\varphi}((e_{i})_{i\in \RI^{\varphi}_{\gp}})=\prod_{i\in \RI^{\varphi}_{\gp}}\CE(\varphi_i,\phi)^{e_{i}}.
\end{equation}
By convention, if $\varphi$ or $\phi$ equals $0$, then $\chi^\star(\phi,\varphi)=(1,1)$.

Note that $\chi^\star_\phi$ and $\chi^\star_\varphi$ belong to $\widehat{\CS}_\phi$ and $\widehat{\CS}_\varphi$, respectively, and are
independent of the choice of $\psi_F$ defining local root numbers (see \cite[Section 6]{GGP}).
It is easy to see that
$$
\chi^\star_\phi((1))=\chi^\star_\varphi((1))=\CE(\varphi,\phi).
$$
By \cite[Sections 6 and 18]{GGP}, the character $\chi^\star(\phi,\varphi)$  of $\CS_\phi \times \CS_{\varphi}$ only depends on $\phi$ and $[\varphi]_c$.

For $\pi\in\Pi(G_n)$ with $G_n=\SO(V_{2n})$ and $\sig\in\Pi(H_m)$ with $H_m=\SO(V_{2m+1})$,
define the multiplicity for the pair $(\pi,\sig)$ by
\begin{equation}\label{mult}
m(\pi,\sig)= \begin{cases}
\dim\Hom_{R_{\CO_\ell}}(\pi,\sig\otimes\psi_{\CO_\ell}) & \text{ if }n> m,\\
\dim\Hom_{R_{\CO_\ell}}(\sig,\pi\otimes\psi_{\CO_\ell}) & \text{ if }n\leq m.
\end{cases}
\end{equation}

\begin{thm}[Theorem in Section 4.9 of \cite{W12b}]\label{thm:W12b}\label{lm:Local-GGP}
Assume that the twisted endoscopic and twisted endoscopic character identities as described in \cite[Sections 4.2 and 4.3]{W12b} hold.
Suppose that $\varphi$ and $\phi$ are generic $L$-parameters of $G_n=\SO(V_{2n})$ and $H_m=\SO(W_{2m+1})$.
There exists an isometry class of quadratic spaces $V\times W$, unique up to a scalar multiplication, that satisfies the following conditions:
\begin{align}
&\disc(V)= \det(\varphi) \label{eq:thm:DV} \\
&\Hss(W)= (-1,-1)^{\frac{m(m+1)}{2}}((-1)^{m+1},\disc(W))\CE(\varphi,\phi)
\label{eq:thm:HW}\\
&\Hss(V)= (-1,-1)^{\frac{n(n+1)}{2}}((-1)^n \disc(W),\disc(V))\CE(\varphi,\phi).
\label{eq:thm:HV}
\end{align}
Moreover, under the local Langlands correspondence $\iota_{a}$
$$m(\pi_a(\varphi,\chi^\star_\varphi),\pi_a(\phi,\chi^\star_{\phi}))=1,$$
where $a=-\disc(W)\disc(V)$.
\end{thm}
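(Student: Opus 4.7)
The plan is to derive this statement as an explicit translation of the local Gan-Gross-Prasad conjecture for generic parameters, together with the epsilon dichotomy of Gan-Gross-Prasad, into the rationality data of the underlying quadratic spaces via the normalization condition \eqref{eq:LLC-normalization}. The twisted endoscopic character identities listed as hypothesis are precisely what is needed to convert the abstract multiplicity-one statement of Theorem \ref{conj:smo} into a concrete statement about which pure inner form $V \times W$ carries the distinguished pair.

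First, invoke Theorem \ref{conj:smo} applied to the generic parameter $\varphi \oplus \phi \in \Phi_\gen(G_n^* \times H_m^*)$. This gives a unique pair $(\pi,\sigma)$ inside the Vogan packet $\Pi_{\varphi\oplus\phi}[G_n^* \times H_m^*]$ for which $\dim\Hom_{R_{\CO_\ell}(F)}(\pi \otimes \sigma^\vee,\psi_{\CO_\ell})=1$. Next, apply the epsilon dichotomy from \cite[Conjecture 17.3]{GGP}, which (under the assumed endoscopic identities) identifies this distinguished pair through its characters on the component groups: the character on $\CS_\varphi$ is exactly $\chi^\star_\varphi$ of \eqref{eq:dist-eps-even-W}, and the character on $\CS_\phi$ is $\chi^\star_\phi$ of \eqref{eq:dist-eps-odd-W}, built from the local root numbers $\CE(\cdot,\cdot)$ defined in \eqref{eq:CE}.

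The third step is to translate these component-group data into the quadratic forms defining $V$ and $W$. The equality $\disc(V) = \det(\varphi)$ is forced by the fact that $\varphi$ takes values in the $L$-group of $\SO(V)$, giving \eqref{eq:thm:DV}. To extract the Hasse invariants, I would evaluate $\chi^\star_\varphi$ and $\chi^\star_\phi$ at the central elements, obtaining
\[
\chi^\star_\varphi((1)) = \chi^\star_\phi((1)) = \CE(\varphi,\phi).
\]
Then feed this value into the normalization \eqref{eq:LLC-normalization} with the specific choice $a = -\disc(W)\disc(V)$: the identity $\iota_a([\pi]_c)((1)) = \CE(\varphi,\phi)$ forces $\Hss(V)$ to equal the expression in \eqref{eq:thm:HV}, and the analogous argument for the odd orthogonal side (using the parameterization of the Vogan packet for $\SO(W)$ from \cite[\S10]{GGP} in terms of the Hasse invariant of $W$) yields \eqref{eq:thm:HW}. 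Uniqueness up to a scalar multiplication of the quadratic form is automatic, since a non-degenerate quadratic space over a $p$-adic field is classified up to isometry by its dimension, discriminant, and Hasse invariant, all of which are now pinned down.

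The main obstacle I anticipate is the bookkeeping between three interacting normalizations: the Whittaker datum encoded by the parameter $a$ in $\iota_a$; the (QD) condition of Lemma \ref{lm:QD} that converts $a$ into a Hasse invariant; and the choice of base point of the rational orbit $\CO_\ell$, which enters because $\disc(\CO_\ell)$ and $\Hss(\CO_\ell)$ determine, via \eqref{eq:decomp} and \eqref{eq:Hasse-W}, how $W$ sits inside $V$. The selection $a = -\disc(W)\disc(V)$ is precisely the unique value that aligns all three, making the Whittaker normalization compatible with the Bessel datum $(V_{\udl{p}_\ell},\psi_{\CO_\ell})$ used to define the multiplicity $m(\pi,\sigma)$. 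Once these normalizations are correctly matched, the theorem follows; the delicate calculation is essentially the content of \cite[\S4]{W12b}, and my proof would amount to checking that the statements there specialize, under the current conventions, to the three formulas \eqref{eq:thm:DV}, \eqref{eq:thm:HW}, and \eqref{eq:thm:HV}.
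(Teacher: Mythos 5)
The paper does not give a proof of this statement: it is quoted verbatim as the theorem of \cite[\S 4.9]{W12b}, with only a remark that the assumption on twisted endoscopic character identities was later discharged by M\oe glin--Waldspurger's stabilization of the twisted trace formula. So there is no internal proof to compare against.

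As an outline, your proposal correctly names all the moving parts: Theorem~\ref{conj:smo} for multiplicity one across the Vogan packet, the epsilon-dichotomy formula identifying the distinguished member via $\chi^\star_\varphi$ and $\chi^\star_\phi$, the central-element evaluation $\chi^\star_\varphi((1))=\chi^\star_\phi((1))=\CE(\varphi,\phi)$, the normalization \eqref{eq:LLC-normalization}, and the classification of $p$-adic quadratic spaces by dimension, discriminant, and Hasse invariant. The arithmetic you leave implicit does close: substituting $a=-\disc(W)\disc(V)$ and $\iota_a([\pi]_c)((1))=\CE(\varphi,\phi)$ into \eqref{eq:LLC-normalization} gives
\[
\Hss(V)=\CE(\varphi,\phi)\,(-1,-1)^{\frac{n(n+1)}{2}}\bigl((-1)^n(-\disc(W)\disc(V)),\disc(V)\bigr),
\]
and the Hilbert-symbol identities $(\disc(V),\disc(V))=(-1,\disc(V))$ and symmetry collapse the last factor to $((-1)^n\disc(W),\disc(V))$, recovering \eqref{eq:thm:HV}. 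That said, you acknowledge that "the delicate calculation is essentially the content of \cite[\S4]{W12b}," which is the very theorem you are citing; so what you have written is a faithful roadmap to Waldspurger's derivation rather than an independent argument. That is appropriate here, since the paper itself treats the result purely as an imported input, but you should be explicit that your "proof" is a reconstruction of the cited source and not a self-contained deduction from what appears earlier in the paper.

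One small correction: the parameter fed into Theorem~\ref{conj:smo} is the pair $(\varphi,\phi)\in\Phi_\gen(G_n^*\times H_m^*)$, not the direct sum $\varphi\oplus\phi$; the latter notation suggests a single parameter on a larger group, which is not what the GGP setup uses.
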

It must be mentioned that the assumption on the twisted endoscopic and twisted endoscopic character identities for the quasi-split groups in Theorem \ref{thm:W12b} is removed
by the work of M\oe glin and Waldspurger (\cite{MW-STTF}) on the stabilization of the twisted trace formula.

We note that in \cite[Theorem 5.2]{AG}, Atobe and Gan give a version of local GGP conjecture in terms of the weak local Langlands correspondence.

\begin{rmk}\label{rmk:O-V-W}
Suppose that $V\times W$ satisfies the conditions in Theorem \ref{thm:W12b}, so is
$\lam V\times \lam W$ for any $\lam \in\CZ$.
More precisely, after choosing $\disc(W)\in \CZ$,
one can choose an $F$-rational orbit $\CO_\ell$ with $\disc (\CO_\ell)=(-1)^{\min\{2m+1,2n\}}\disc(V)\disc(W)$.
Then there is a unique isometry class $V\times W$ satisfying
Equations \eqref{eq:thm:DV}, \eqref{eq:thm:HW} and \eqref{eq:thm:HV}, which is associated with $\CO_\ell$.
\end{rmk}

The relation between the multiplicity and the parabolic induction is given by the following proposition of M\oe glin and Waldspurger.

\begin{prop}[Proposition (1.3) \cite{MW12}]\label{pro:MW12}
Assume that $\pi$ is the induced representation
$$
\Ind^{G_n}_{P}\tau_1|\det|^{\alpha_1}\otimes\cdots\otimes\tau_r|\det|^{\alpha_r}\otimes\pi_0
$$
and $\sig$ is the induced representation
$$
\Ind^{H_m}_{Q}\tau'_1|\det|^{\beta_1}\otimes\cdots\otimes\tau'_t|\det|^{\beta_t}\otimes\sig_0
$$
where $\alpha_1\geq \alpha_2\geq \cdots\geq \alpha_r\geq 0$,
$\beta_1\geq\beta_2\geq\cdots\geq\beta_t\geq 0$,
all $\tau_i$ and $\tau'_j$ are unitary tempered irreducible representations of general linear groups, and $\pi_0$ and $\sig_0$ are tempered irreducible representations of classical groups of smaller rank. Then the following equation of multiplicities holds:
$$
m(\pi,\sig^{\vee})=m(\pi_0,\sig_0).
$$
\end{prop}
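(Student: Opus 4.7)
\textbf{Proof Plan for Proposition \ref{pro:MW12}.}
My plan is to reduce the computation of $m(\pi, \sigma^\vee)$ to a Bessel-type multiplicity on smaller classical groups by an orbit analysis, much in the spirit of the Bernstein--Zelevinsky geometric lemma combined with Mackey theory for the Bessel subgroup $R_{\CO_\ell}$. First, I would write $\tau=\tau_1|\det|^{\alpha_1}\otimes\cdots\otimes\tau_r|\det|^{\alpha_r}$ and $\tau'=\tau'_1|\det|^{\beta_1}\otimes\cdots\otimes\tau'_t|\det|^{\beta_t}$ so that $\pi=\Ind_P^{G_n}(\tau\otimes\pi_0)$ and $\sigma=\Ind_Q^{H_m}(\tau'\otimes\sigma_0)$. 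By Frobenius reciprocity applied to the outer induction (and dualizing appropriately to pass to $\sigma^\vee$), the space $\Hom_{R_{\CO_\ell}(F)}(\pi,\sigma^\vee\otimes\psi_{\CO_\ell})$ can be rewritten in terms of Hom spaces against Jacquet modules; the inducing data is then confronted with the Bessel subgroup through the double coset space $P\backslash G_n/R_{\CO_\ell}$ (and symmetrically on the $Q$ side).

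The geometric lemma yields a finite filtration of $\pi|_{R_{\CO_\ell}}$ (and of $\sigma|_{R_{\CO_\ell'}}$) indexed by such double cosets $w$, whose graded pieces are compactly induced modules built from Jacquet modules of $\tau_i|\det|^{\alpha_i}$ and of $\pi_0$ along parabolics intersected with $w^{-1}R_{\CO_\ell}w$. The plan is to identify a distinguished \emph{open/negligible} double coset $w_0$ whose graded piece reproduces precisely the parallel induction relevant for the smaller pair $(\pi_0,\sigma_0^\vee)$, and to show that every other coset contributes zero. On the open piece the outer inductions on both sides can be absorbed into each other and the computation collapses to $\dim\Hom_{R_{\CO_{\ell''}}}(\pi_0,\sigma_0^\vee\otimes\psi_{\CO_{\ell''}})$ for the Bessel pair of smaller orthogonal groups, which is exactly $m(\pi_0,\sigma_0)$ using $\sigma_0^\vee\cong \sigma_0$ (valid for tempered representations of special orthogonal groups).

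The heart of the argument, and the main obstacle, is the vanishing of the contributions from the non-open double cosets. Here the hypotheses $\alpha_1\geq\cdots\geq\alpha_r\geq 0$ and $\beta_1\geq\cdots\geq\beta_t\geq 0$, together with the fact that the $\tau_i$, $\tau'_j$, $\pi_0$, $\sigma_0$ are unitary tempered, become essential. On each non-open orbit the associated Jacquet module of the inducing datum carries exponents which can be bounded using the Casselman square-integrability/tempered criteria, whereas the Bessel functional imposes growth conditions incompatible with those exponents unless the orbit is open. One makes this quantitative by following the argument of M\oe glin--Waldspurger: express the semisimplification of the Jacquet module along each intersecting parabolic in terms of $\tau_i|\det|^{\pm\alpha_i}$, and use positivity $\alpha_i\geq 0$ to rule out any matching with a tempered supercuspidal support on the smaller group side. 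The boundary case $\alpha_i=0$ (and $\beta_j=0$) requires the more delicate analysis of tempered parabolic induction and uses that unitarity of $\tau_i$ prevents cancellation, ensuring that even in these borderline cases only the open orbit contributes.

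After this orbit-vanishing analysis is in place, one performs the symmetric reduction on the $Q$-induction side to eliminate $\tau'$ as well, arriving at $m(\pi,\sigma^\vee)=m(\pi_0,\sigma_0^\vee)=m(\pi_0,\sigma_0)$. The step-by-step plan is therefore: (i) Frobenius reciprocity to expose the Bessel Hom as a sum over double cosets; (ii) geometric lemma to compute each piece; (iii) exponent/temperedness analysis to kill non-open orbits; (iv) identification of the open-orbit contribution with the smaller Bessel multiplicity; (v) symmetric treatment on the dual induction and invocation of $\sigma_0^\vee\cong\sigma_0$. The subtle arithmetic of the exponents in step (iii), especially at the boundary $\alpha_i=0$ or $\beta_j=0$, is the principal technical difficulty.
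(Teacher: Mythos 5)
The paper does not prove this statement: Proposition~\ref{pro:MW12} is quoted verbatim as Proposition~(1.3) of M\oe glin--Waldspurger \cite{MW12} and is used as a black box throughout Sections~\ref{sec-ITQ}--\ref{sec-MTHM}. There is therefore no ``paper's own proof'' to compare your attempt against; the comparison has to be against the argument in \cite{MW12} itself.

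On the merits of your plan: the Mackey/geometric-lemma outline you describe --- filter $\pi|_{R_{\CO_\ell}}$ over $P\backslash G_n/R_{\CO_\ell}$, show non-open orbits contribute nothing by an exponent estimate, and identify the open orbit piece with the smaller Bessel multiplicity --- is a reasonable and recognizable strategy, and it does reflect part of what M\oe glin and Waldspurger actually do (their argument combines second adjointness with careful exponent estimates on twisted Jacquet modules, together with the integral realization of Bessel functionals from Waldspurger's earlier papers). However, your final step is where I see a concrete gap. You write $m(\pi,\sigma^\vee)=m(\pi_0,\sigma_0^\vee)=m(\pi_0,\sigma_0)$ and justify the last equality by ``$\sigma_0^\vee\cong\sigma_0$ (valid for tempered representations of special orthogonal groups).'' This self-duality is \emph{not} automatic. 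For $\SO(2n+1)$ every irreducible smooth representation is self-dual (M\oe glin--Vign\'eras--Waldspurger), but for $\SO(2n)$ one has only $\sigma_0^\vee\cong\sigma_0\circ\Ad(c)$ for the outer involution $c\in\RO(V_{2n})\smallsetminus\SO(V_{2n})$; a tempered $\sigma_0$ need not satisfy $\sigma_0^\vee\cong\sigma_0$. Since the pair $(\pi_0,\sigma_0)$ always involves one even and one odd orthogonal group, this issue is genuinely present. The correct bookkeeping is to track the dual through the geometric-lemma reduction carefully --- the inducing data of $\sigma^\vee$ carry the exponents $-\beta_j$ and the dual inner representation $\sigma_0^\vee$, and the asymmetry in the statement ($\sigma^\vee$ on the left but $\sigma_0$ on the right) must come out of the orbit analysis and the precise normalization of the Bessel character on the chosen open orbit, not from an ad hoc self-duality claim. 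You would need either to redo step (v) so that the open-orbit contribution literally produces $\Hom_{R_{\CO_{\ell''}}}(\pi_0\otimes\sigma_0^\vee,\psi_{\CO_{\ell''}})$ which by the definition \eqref{lbm}--\eqref{mult} \emph{is} $m(\pi_0,\sigma_0)$, or to invoke the $c$-conjugation symmetry of Bessel models explicitly; as written, the last equality is not justified.

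One further soft point: in step (iii) you appeal to ``Casselman-type'' growth bounds to kill the non-open orbits, but at the boundary $\alpha_i=0$ (resp.~$\beta_j=0$) the exponent inequality is not strict, and ruling out those orbits requires the finer tempered analysis (Harish-Chandra $\Xi$-function bounds, as in the Appendix here and in \cite{BP15}), not merely positivity. Your sketch acknowledges this is ``delicate,'' but it is exactly where the argument in \cite{MW12} spends most of its effort, so it should not be waved through.
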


\section{Proof of Proposition \ref{th:itq}}\label{sec-ITQ}

The proof of Proposition \ref{th:itq} takes a few steps. We first prove the following lemma, which implies Proposition \ref{pp:iq}.
\begin{lem}\label{lm:giq}
For any $\pi\in\Pi(G_n)$, the following hold.
\begin{enumerate}
\item Each Bernstein component of the $\ell$-th twisted Jacquet module $\CJ_{\CO_\ell}(\pi)$ is finitely generated.
\item If $\CJ_{\CO_\ell}(\pi)\ne 0$, then there exists an irreducible representation $\sig\in\Pi(G^{\CO_{\ell}}_{n})$
such that
$$
\Hom_{G^{\CO_{\ell}}_{n}(F)}(\CJ_{\CO_\ell}(\pi),\sig)\ne0.
$$
\item If $\CJ_{\CO_\ell}(\pi)\ne 0$, then the $\ell$-th local descent $\CD_{\CO_\ell}(\pi)$ of $\pi$ is not zero.
\end{enumerate}
\end{lem}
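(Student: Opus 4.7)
The plan is to establish parts (1), (2), (3) in that order, with (1) providing the foundation, (2) an immediate consequence via a standard Zorn-type argument, and (3) following formally from (2).

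For part (1), the strategy is to factor the twisted Jacquet construction through the ordinary Jacquet module. Writing $P_{1^\ell}=M_{1^\ell}N_{1^\ell}$ as in \eqref{eq:V-p-ell}, one first considers the (untwisted) Jacquet module $\pi_{N_{1^\ell}}$, which by Bernstein--Casselman theory is admissible (hence finitely generated on each Bernstein component) as a smooth representation of the Levi $M_{1^\ell}(F)\cong \GL_1(F)^\ell\times G_{n-\ell}(F)$. The passage to $\CJ_{\CO_\ell}(\pi)$ is then obtained by further coinvariants along the unipotent subgroup contained in the stabilizer direction of $\psi_{\CO_\ell}$ and restriction to the stabilizer $G_n^{\CO_\ell}(F)\hookrightarrow G_{n-\ell}(F)$. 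Using Bernstein's second adjointness for the parabolic side and a Mackey-type analysis for the restriction to the orthogonal subgroup fixing $w_0$, I would show that each Bernstein block is preserved in finite-generation; the key geometric point is that the action of $G_{n-\ell}(F)$ on the set of characters $\psi_{\ell, w_0}$ has compact stabilizers transverse to $G_n^{\CO_\ell}(F)$, so finite generation propagates through the twisting.

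For part (2), given a nonzero $\CJ_{\CO_\ell}(\pi)$, choose any Bernstein component on which it is nonzero. By part (1) this component is a finitely generated smooth representation of $G_n^{\CO_\ell}(F)$. A standard Zorn's lemma argument then produces a maximal proper subrepresentation: the union of any chain of proper subrepresentations is proper precisely because a finite generating set cannot lie in a proper union, using smoothness and the fact that in the category of smooth representations of a reductive $p$-adic group every nonzero finitely generated object admits an irreducible quotient. This yields a $G_n^{\CO_\ell}(F)$-equivariant surjection $\CJ_{\CO_\ell}(\pi)\twoheadrightarrow\sigma$ for some $\sigma\in\Pi(G_n^{\CO_\ell})$, giving a nonzero element of $\Hom_{G_n^{\CO_\ell}(F)}(\CJ_{\CO_\ell}(\pi),\sigma)$.

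For part (3), recall that $\CD_{\CO_\ell}(\pi)=\CJ_{\CO_\ell}(\pi)/\CS_{\CO_\ell}(\pi)$ where $\CS_{\CO_\ell}(\pi)=\bigcap_{\Fl_\sigma}\ker(\Fl_\sigma)$ is the intersection of the kernels of all Bessel functionals. By part (2) there exists at least one $\sigma$ and one nonzero $\Fl_\sigma\in\Hom_{G_n^{\CO_\ell}(F)}(\CJ_{\CO_\ell}(\pi),\sigma)$; its kernel is a proper subrepresentation of $\CJ_{\CO_\ell}(\pi)$, and therefore $\CS_{\CO_\ell}(\pi)$ is contained in this proper subrepresentation. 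Hence $\CD_{\CO_\ell}(\pi)\neq 0$.

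The main obstacle is part (1). The untwisted analog is classical via Bernstein's second adjointness, but the twisted Jacquet setting introduces the subtlety that one must pass coinvariants relative to a nontrivial character supported on part of the unipotent radical and then restrict to the stabilizing orthogonal subgroup $G_n^{\CO_\ell}(F)$, which is smaller than the Levi factor. Keeping track of finite generation of each Bernstein component through this combined twist-and-restrict operation is precisely the technical point alluded to in the acknowledgment to Beuzart-Plessis, and I would expect to need either a careful filtration argument (in the spirit of Bernstein--Zelevinsky geometrical lemmas) or an explicit exhibition of a finite generating set coming from a finitely generated admissible model of $\pi_{N_{1^\ell}}$.
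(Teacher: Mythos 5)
Parts (2) and (3) of your argument are sound and align with the paper: once each Bernstein component of $\CJ_{\CO_\ell}(\pi)$ is known to be finitely generated, the Zorn's lemma argument produces an irreducible quotient, and then (3) is formal because the kernel of the resulting Bessel functional is proper, so $\CS_{\CO_\ell}(\pi)$ cannot be all of $\CJ_{\CO_\ell}(\pi)$.

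Part (1), however, contains a genuine gap that invalidates your approach. You propose to factor the twisted Jacquet construction through the ordinary Jacquet module $\pi_{N_{1^\ell}}$ and then pass to "further coinvariants along the unipotent subgroup contained in the stabilizer direction of $\psi_{\CO_\ell}$." But $\CJ_{\CO_\ell}(\pi)=V_\pi/V_\pi(V_{\udl{p}_\ell},\psi_{\CO_\ell})$ is the $\psi_{\CO_\ell}$-twisted coinvariant of $\pi$ under the full unipotent $V_{\udl{p}_\ell}=N_{1^\ell}$, with a nontrivial character, whereas $\pi_{N_{1^\ell}}$ is the \emph{trivial}-character coinvariant under that same group. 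These are distinct quotients of $\pi$ by distinct subspaces; neither is a quotient of the other. In particular, for $\pi$ supercuspidal you have $\pi_{N_{1^\ell}}=0$ while $\CJ_{\CO_\ell}(\pi)$ can be nonzero, so the finite generation of (the Bernstein components of) $\pi_{N_{1^\ell}}$ tells you nothing about $\CJ_{\CO_\ell}(\pi)$. The subsequent appeal to second adjointness and to "compact stabilizers" of the character under $G_{n-\ell}(F)$ is also off: the stabilizer of $\psi_{\ell,w_0}$ contains the noncompact reductive subgroup $G_n^{\CO_\ell}$, and second adjointness is an untwisted-parabolic statement that does not apply to Bessel-type twisted coinvariants.

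The paper's actual proof of (1) proceeds by a quite different route. One starts from the $G_n(F)\times G_n(F)$-equivariant surjection $\CS(G_n(F))\twoheadrightarrow \pi\otimes\pi^\vee$, tensors with $\CS(G_n^{\CO_\ell}(F))$, takes $(V_{\udl{p}_\ell},\psi_{\CO_\ell})$-coinvariants and then $G_n^{\CO_\ell}$-coinvariants on the left, and identifies the source with the Schwartz space $\CS\bigl((G_n^{\CO_\ell}\ltimes V_{\udl{p}_\ell})\backslash(G_n\times G_n^{\CO_\ell}),\psi_{\CO_\ell}\bigr)$ of the Bessel homogeneous space; this surjects onto $\pi^\vee\otimes\CJ_{\CO_\ell}(\pi)$. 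The finite generation of each Bernstein component of that Schwartz space is then imported from Theorem A of Aizenbud–Avni–Gourevitch on spherical pairs over close local fields. In other words, the paper sidesteps any attempt at a filtration or Mackey-theoretic reduction of the twisted Jacquet functor and instead uses a global surjection from a Schwartz space whose finiteness properties are already known. You would need to replace your factorization argument with this (or some comparable) input; as stated, your part (1) does not go through.
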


\begin{proof}
It is clear that (3) follows from (2). Assume that (1) holds. Then (2) follows, since any smooth representation of finite length has
an irreducible quotient. Hence we only need to show that (1) holds.

For any $\pi\in\Pi(G_n)$, let $\End(\pi)$ be the space of continuous endomorphisms of the space of $\pi$.
We have the following $G_n(F)\times G_n(F)$-equivariant homomorphism:
$$
f\in \CS(G_n(F)):=C^\infty_c(G_n(F))\mapsto \pi(f)\in\End(\pi)\simeq\pi\otimes\pi^\vee
$$
where $\pi(f)(v):=\int_{G_n(F)} f(g)\pi(g)v\ud g$ is the convolution operator induced by $\pi$ for all Bruhat-Schwartz functions
$f\in \CS(G_n(F))$. It is not hard to check that this homomorphism is surjective.
By the separation of variables,
the homomorphism induces a surjective homomorphism
\begin{eqnarray*}
\CS(G_n(F)\times G^{\CO_{\ell}}_{n}(F))
&=&\CS(G_n(F))\otimes \CS(G^{\CO_{\ell}}_{n}(F))\\
&\twoheadrightarrow&
\pi\otimes\pi^\vee\otimes\CS(G^{\CO_{\ell}}_{n}(F)).
\end{eqnarray*}
By taking the $(V_{\udl{p}_\ell},\psi_{\CO_\ell})$-coinvariant for the action by the left translation of $\CS(G_n(F))$,
one has the projection
$$
{}_{(V_{\udl{p}_\ell},\psi_{\CO_\ell})}\!(\CS(G_n)(F))\otimes \CS(G^{\CO_{\ell}}_{n}(F))
\twoheadrightarrow \CJ_{\CO_\ell}(\pi)\otimes\pi^\vee\otimes \CS(G^{\CO_{\ell}}_{n}(F)).
$$
Then taking $G^{\CO_{\ell}}_{n}$-coinvariant for the action by the left translation, one obtains a surjective homomorphism
$$
{}_{G^{\CO_{\ell}}_{n}}[{}_{(V_{\udl{p}_\ell},\psi_{\CO_\ell})}\!(\CS(G_n(F)))\otimes \CS(G^{\CO_{\ell}}_{n}(F))]
\twoheadrightarrow \pi^\vee\otimes
{}_{G^{\CO_{\ell}}_{n}}[\CJ_{\CO_\ell}(\pi)\otimes \CS(G^{\CO_{\ell}}_{n}(F))].
$$
Note that $G^{\CO_{\ell}}_{n}(F)$ acts on ${}_{G^{\CO_{\ell}}_{n}}[\CJ_{\CO_\ell}(\pi)\otimes \CS(G^{\CO_{\ell}}_{n}(F))]$ via the left translation on the second variable. As smooth representations of $G^{\CO_{\ell}}_{n}(F)$, one has a surjection
$$
{}_{G^{\CO_{\ell}}_{n}}[\CJ_{\CO_\ell}(\pi)\otimes \CS(G^{\CO_{\ell}}_{n}(F))]\twoheadrightarrow \CJ_{\CO_\ell}(\pi).
$$
We need to show that the map
$$
\Fp \colon \CJ_{\CO_\ell}(\pi)\otimes \CS(G^{\CO_{\ell}}_{n}(F))\to \CJ_{\CO_\ell}(\pi)
$$
defined by
$
v\otimes f\mapsto \int_{G^{\CO_{\ell}}_{n}(F)}f(h^{-1})\CJ_{\CO_\ell}(\pi)(h)v\ud h
$
factors through the quotient ${}_{G^{\CO_{\ell}}_{n}}[\CJ_{\CO_\ell}(\pi)\otimes \CS(G^{\CO_{\ell}}_{n}(F))]$ and is surjective.
First, for any smooth vector $v\in \CJ_{\CO_\ell}(\pi)$,
suppose that $v$ is fixed by a compact open subgroup $K_0$ of $G^{\CO_{\ell}}_{n}(F)$.
Let $1_{K_0}$ be a characteristic function of $K_0$ in $\CS(G^{\CO_{\ell}}_{n}(F))$.
By choosing a suitable non-zero constant $c_0$, we have
$
v\otimes c_0\cdot 1_{K_0}\mapsto v.
$
It follows that this map is surjective.
Then, if $v\otimes f$ is of form $\sum (\pi(h_i)v_i\otimes L(h_i)f_i-v_i\otimes f_i)$ for some $h_i$, $v_i$ and $f_i$, one must have that $\Fp(v\otimes f)=0$.
Thus, the map $\Fp$ factors through the quotient ${}_{G^{\CO_{\ell}}_{n}}[\CJ_{\CO_\ell}(\pi)\otimes \CS(G^{\CO_{\ell}}_{n}(F))]$.

Because
$$
\CS((G^{\CO_{\ell}}_{n}\ltimes V_{\udl{p}_\ell})\bks G_n\times G^{\CO_{\ell}}_{n},\psi_{\CO_\ell})
={}_{G^{\CO_{\ell}}_{n}}({}_{(V_{\udl{p}_\ell},\psi_{\CO_\ell})}\!(\CS(G_n))\otimes \CS(G^{\CO_{\ell}}_{n})),
$$
we obtain a projection
$$
\CS((G^{\CO_{\ell}}_{n}(F)\ltimes V_{\udl{p}_\ell}(F))\bks G_n(F)\times G^{\CO_{\ell}}_{n}(F),\psi_{\CO_\ell})\twoheadrightarrow \pi^\vee\otimes\CJ_{\CO_\ell}(\pi).
$$
By Theorem A and Remark of Theorem A in \cite{AAG}, as a representation of $G_n(F)\times G_n^{\CO_\ell}(F)$,
each Bernstein component of
$$\CS((G^{\CO_{\ell}}_{n}(F)\ltimes V_{\udl{p}_\ell}(F))\bks G_n(F)\times G^{\CO_{\ell}}_{n}(F),\psi_{\CO_\ell})$$
is finitely generated, and so is each Bernstein component of $\CJ_{\CO_\ell}(\pi)$.
This finishes the proof of (1).
\end{proof}

Assume that $\pi\in\Pi(G_n)$ has a generic $L$-parameter.
By the corollary in \cite[Section 2.14]{MW12},   $\pi$ can be written as the irreducible induced representation (standard module)
\begin{equation}\label{pigp}
\Ind^{G_n(F)}_{P(F)}\tau_1|\det|^{\alpha_1}\otimes\cdots \otimes \tau_t|\det|^{\alpha_t}\otimes\pi_0
\end{equation}
where $\alpha_1>\alpha_2>\cdots>\alpha_t>0$, all $\tau_i$ and $\pi_0$ are irreducible unitary tempered representations.
One may write the Levi subgroup of $P$ as $\GL_{a_1}\times\cdots\times\GL_{a_t}\times G_{n_0}$ and has that $\pi_0\in\Pi_\temp(G_{n_0})$.

As a corollary to Proposition \ref{pro:MW12}, one has
\begin{cor}\label{pp:gtq}
For any $\pi\in\Pi(G_n)$ that has a generic $L$-parameter and is written as in \eqref{pigp}, the following hold.
\begin{enumerate}
\item The first occurrence indexes $\ell_0(\pi)$ and $\ell_0(\pi_0)$, with $\pi_0$ being in \eqref{pigp}, enjoy the relation:
$
\ell_0(\pi)=n-n_0+\ell_0(\pi_0).
$
\item Every irreducible quotient of the local descent $\CD_{\CO_{\ell_0(\pi)}}(\pi)$ of $\pi$ is square-integrable.
\end{enumerate}
\end{cor}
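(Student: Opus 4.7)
My plan is to deduce both parts of the corollary from Proposition \ref{pro:MW12}, combined with the Langlands classification and the semisimplicity of tempered parabolic induction for $p$-adic classical groups. For the lower bound $\ell_0(\pi) \geq n - n_0 + \ell_0(\pi_0)$ in Part (1), Lemma \ref{lm:giq} applied to $\pi_0$ supplies an irreducible quotient $\sigma_0 \in \Pi(G_{n_0}^{\CO_{\ell_0(\pi_0)}})$ of $\CJ_{\CO_{\ell_0(\pi_0)}}(\pi_0)$, so $m(\pi_0, \sigma_0) \neq 0$. Setting $\ell := n - n_0 + \ell_0(\pi_0)$ and fixing a compatible $F$-rational orbit $\CO_\ell$, I form
\begin{equation*}
\sigma := \Ind\bigl(\tau_1|\det|^{\alpha_1} \otimes \cdots \otimes \tau_t|\det|^{\alpha_t} \otimes \sigma_0\bigr)
\end{equation*}
on $G_n^{\CO_\ell}(F)$, re-using the $\tau_i|\det|^{\alpha_i}$ from the standard module of $\pi$; then Proposition \ref{pro:MW12} yields $m(\pi, \sigma^\vee) = m(\pi_0, \sigma_0) \neq 0$, so $\CJ_{\CO_\ell}(\pi) \neq 0$ and $\ell_0(\pi) \geq \ell$.

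For the upper bound, suppose $\CJ_{\CO_\ell}(\pi) \neq 0$ and use Lemma \ref{lm:giq}(2) to pick an irreducible quotient $\sigma \in \Pi(G_n^{\CO_\ell})$. Realize $\sigma$ as the Langlands quotient of a standard module $\sigma^{\mathrm{std}} = \Ind(\tau'_1|\det|^{\beta_1} \otimes \cdots \otimes \tau'_s|\det|^{\beta_s} \otimes \sigma_0)$, with $\beta_1 \geq \cdots \geq \beta_s \geq 0$, unitary tempered $\tau'_j$, and tempered $\sigma_0$. Dualizing the surjection $\sigma^{\mathrm{std}} \twoheadrightarrow \sigma$ produces an injection $\sigma^\vee \hookrightarrow (\sigma^{\mathrm{std}})^\vee$, and left-exactness of the Bessel $\Hom$-functor then yields
\begin{equation*}
m(\pi, \sigma^\vee) \leq m(\pi, (\sigma^{\mathrm{std}})^\vee) = m(\pi_0, \sigma_0),
\end{equation*}
with the equality furnished by Proposition \ref{pro:MW12}. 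Tracking ranks, $\sigma_0$ sits on $G_{n_0}^{\CO_{\ell'}}$ with $\ell' = \ell - (n - n_0) + \sum_j \deg\tau'_j$, and first occurrence of $\pi_0$ forces $\ell' \leq \ell_0(\pi_0)$. Since $\sum_j \deg\tau'_j \geq 0$, this gives $\ell \leq n - n_0 + \ell_0(\pi_0)$. Applied at the maximum $\ell = \ell_0(\pi)$ the same inequality forces $\sum_j \deg\tau'_j = 0$, so $\sigma = \sigma_0$ is already tempered.

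To finish Part (2), it remains to upgrade tempered to square-integrable. Suppose for contradiction that the tempered $\sigma$ is not discrete series. By semisimplicity of tempered parabolic induction for $p$-adic classical groups, $\sigma$ is an irreducible summand of some
\begin{equation*}
\Sigma := \Ind(\delta_1 \otimes \cdots \otimes \delta_u \otimes \sigma^\sharp),
\end{equation*}
with each $\delta_i$ a unitary discrete series on a proper GL factor and $\sigma^\sharp$ a discrete series on a strictly smaller classical group. The summand embedding $\sigma^\vee \hookrightarrow \Sigma^\vee$ and Proposition \ref{pro:MW12} applied to $(\pi, \Sigma)$ yield
\begin{equation*}
0 < m(\pi, \sigma^\vee) \leq m(\pi, \Sigma^\vee) = m(\pi_0, \sigma^\sharp);
\end{equation*}
but $\sigma^\sharp$ then realizes a Bessel model for $\pi_0$ at level $\ell^\sharp = \ell_0(\pi_0) + \sum_i \deg\delta_i > \ell_0(\pi_0)$, contradicting the first occurrence of $\pi_0$.

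The main obstacle is the rank bookkeeping ensuring Proposition \ref{pro:MW12} applies with matched Bessel data on the $\pi$-side and the $\pi_0$-side through the Levi of the standard module; this compatibility is implicit in the setup of Section \ref{ssec-MPI}. With that in hand, the two key analytic inputs, namely the left-exactness comparison between $\sigma^\vee$ and $(\sigma^{\mathrm{std}})^\vee$ and the semisimplicity of tempered parabolic induction, combine cleanly to deliver both parts of the corollary.
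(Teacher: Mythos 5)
Your overall strategy — Proposition \ref{pro:MW12}, the Langlands classification, and the structure theory of tempered representations — is surely what the authors have in mind (no proof is written in the paper), and the upper‑bound and the square‑integrability upgrade are in the right spirit. But there are two real gaps, plus a persistent notational slip.

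\textbf{Gap in the lower bound.} You set $\ell := n - n_0 + \ell_0(\pi_0)$ and then form $\sigma := \Ind\bigl(\tau_1|\det|^{\alpha_1}\otimes\cdots\otimes\tau_t|\det|^{\alpha_t}\otimes\sigma_0\bigr)$ \emph{on} $G_n^{\CO_\ell}(F)$. This induction is ill-defined: the quadratic space of $G_n^{\CO_\ell}$ has dimension $\Fn - 2\ell - 1 = \Fn_0 - 2\ell_0(\pi_0) - 1$, which is exactly the dimension of the quadratic space of $G_{n_0}^{\CO_{\ell_0(\pi_0)}}$ where $\sigma_0$ lives. A parabolic of $G_n^{\CO_\ell}$ whose Levi carries $\GL$-blocks of total rank $\sum_i \deg\tau_i = n - n_0 > 0$ therefore has a classical factor that is strictly smaller than $G_{n_0}^{\CO_{\ell_0(\pi_0)}}$, so $\sigma_0$ cannot serve as the inducing datum. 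The $\GL$-blocks $\tau_i|\det|^{\alpha_i}$ are already accounted for on the $\pi$-side of Proposition \ref{pro:MW12}; the correct choice is to take $\sigma := \sigma_0$ itself, viewed on $G_n^{\CO_\ell}$ via the identification of quadratic spaces, so that the $\sigma$-side $\GL$-data in Proposition \ref{pro:MW12} is empty. (Putting any $\GL$-factor on the $\sigma$-side only \emph{lowers} the Bessel level $\ell$, hence weakens the lower bound; balancing the ranks gives precisely $\ell = n - n_0 + \ell_0(\pi_0)$ when that side is empty.)

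\textbf{Temperedness of $\sigma_0$.} Proposition \ref{pro:MW12} requires $\sigma_0$ tempered, but Lemma \ref{lm:giq}(2) only provides some irreducible quotient. You should make explicit that the upper-bound argument, applied to the tempered $\pi_0$ itself (whose standard module is trivial: $r=0$, $\pi_0 = \pi_{00}$), already forces any irreducible quotient at level $\ell_0(\pi_0)$ to be tempered; only then can one feed that $\sigma_0$ into Proposition \ref{pro:MW12} for the lower bound. As written, the lower bound silently assumes what the upper bound must first establish.

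\textbf{Minor.} There is a recurring $\sigma$ vs.\ $\sigma^\vee$ slip. The multiplicity to track is $m(\pi,\sigma) = \dim\Hom_{G_n^{\CO_\ell}(F)}(\CJ_{\CO_\ell}(\pi),\sigma)$; to compare it with Proposition \ref{pro:MW12} one should embed $\sigma$ into the costandard module (equivalently, into $(\sigma^{\vee,\mathrm{std}})^\vee$, where $\sigma^{\vee,\mathrm{std}}$ is the standard module of $\sigma^\vee$), rather than pass to $m(\pi,\sigma^\vee)$. The rank bookkeeping is unaffected since $\sigma_0$ and $\sigma_0^\vee$ sit on the same group, so the conclusion survives, but the argument as stated is not quite tracking the multiplicity that was shown to be nonzero.
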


Finally we prove the following proposition, which proves Proposition \ref{pp:sl} and finishes the proof of Proposition \ref{th:itq}.

Let $\pi\in\Pi(G_n)$ of a generic $L$-parameter. Assume that the character $\psi_{\CO_{\ell_0}}$ is given by $\psi_{\ell_0,w_0}$ (defined in \eqref{eq:psi-ell}).
Then the quadratic space $W_{\ell_0}$ defining $G^{\CO_{\ell_0}}$ is of form \eqref{eq:w}.
For each $\ell\leq \ell_0$, we may choose the {\it compatible} $F$-rational orbit $\CO_\ell$ such that its corresponding character is $\psi_{\ell,w_0}$.
Then its stabilizer $G^{\CO_{\ell}}=\SO(W_\ell,q)$, where
 $W_\ell=X^{+}_{\ell_0-\ell}\oplus W_{\ell_0}\oplus X^-_{\ell_0-\ell}$
 and $X^{\pm}_{\ell_0-\ell}=\Span\{e^{\pm}_{\ell+1}, e^{\pm}_{\ell+2},\dots, e^{\pm}_{\ell_0}\}$.
\begin{prop}[Stability of Local Descent]\label{pp:sld}
For any $\pi\in\Pi(G_n)$ with a generic $L$-parameter,
then the $\ell$-th local descent $\CD_{\CO_{\ell}}(\pi)$ is non-zero for $\ell\leq \ell_0$ and the above compatible $\CO_{\ell}$.
\end{prop}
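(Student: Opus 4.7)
The plan is to reduce the proposition to showing that the twisted Jacquet module $\CJ_{\CO_\ell}(\pi)$ is nonzero for every $\ell \leq \ell_0$ with the compatible $\CO_\ell$, since Lemma \ref{lm:giq}(3) then yields $\CD_{\CO_\ell}(\pi) \neq 0$. At the top level $\ell = \ell_0$, Lemma \ref{lm:giq}(2) produces an irreducible quotient $\sigma^{(0)} \in \Pi(G_n^{\CO_{\ell_0}})$ of $\CJ_{\CO_{\ell_0}}(\pi)$, and Corollary \ref{pp:gtq}(2) ensures that $\sigma^{(0)}$ is square-integrable, hence tempered. In the notation of \eqref{mult}, this gives a nonzero Bessel pairing between $\pi$ and $\sigma^{(0)}$ with respect to $\psi_{\CO_{\ell_0}}$.

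For $\ell < \ell_0$, the compatible decomposition $W_\ell = X^+_{\ell_0-\ell} \oplus W_{\ell_0} \oplus X^-_{\ell_0-\ell}$ supplies a standard parabolic $Q$ of $G_n^{\CO_\ell}$ with Levi $M_Q \cong \GL_1^{\ell_0-\ell} \times G_n^{\CO_{\ell_0}}$. I would set
$$\sigma_\ell \ := \ \Ind_{Q(F)}^{G_n^{\CO_\ell}(F)}\bigl(|\cdot|^{\beta_1} \otimes \cdots \otimes |\cdot|^{\beta_{\ell_0-\ell}} \otimes \sigma^{(0)}\bigr),$$
with real exponents $\beta_1 \geq \cdots \geq \beta_{\ell_0-\ell} \geq 0$ (for instance all zero); this is an induced representation of precisely the form required by Proposition \ref{pro:MW12}, whose tempered bottom component is $\sigma^{(0)}$. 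On the $G_n$-side, realize $\pi$ as the standard module \eqref{pigp} with tempered $\pi_0 \in \Pi_\temp(G_{n_0})$; by Corollary \ref{pp:gtq}(1) we have $\ell_0 = n - n_0 + \ell_0(\pi_0)$, so the Bessel rank attached to the pair $(\pi_0, \sigma^{(0)})$ equals $\ell_0(\pi_0)$.

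Next, apply Proposition \ref{pro:MW12} twice. First, to the pair $(\pi, \sigma^{(0)})$ at Bessel level $\ell_0$, giving
$m\bigl(\pi, (\sigma^{(0)})^{\vee}\bigr) = m(\pi_0, \sigma^{(0)})$;
the left side is nonzero by the first paragraph (using that Bessel multiplicity is insensitive to replacing $\sigma^{(0)}$ by its contragredient, a standard consequence of the symmetry of the pairing together with Theorem \ref{conj:smo}), and hence so is the right. Second, to $(\pi, \sigma_\ell)$ at Bessel level $\ell$, giving
$$m(\pi, \sigma_\ell^{\vee}) \ = \ m(\pi_0, \sigma^{(0)}) \ \neq \ 0.$$
This non-vanishing forces $\Hom_{R_{\CO_\ell}(F)}\bigl(\pi \otimes \sigma_\ell, \psi_{\CO_\ell}\bigr) \neq 0$ and thus $\CJ_{\CO_\ell}(\pi) \neq 0$, which by Lemma \ref{lm:giq}(3) completes the argument.

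The main obstacle is the bookkeeping of the Bessel data so that both invocations of Proposition \ref{pro:MW12} are with respect to the same anisotropic vector $w_0$ of the decomposition \eqref{eq:decomp}, ensuring that $\psi_{\CO_\ell}$ and $\psi_{\CO_{\ell_0}}$ fit coherently with the Levi factorizations of $\pi$ and $\sigma_\ell$; this is exactly what the ``compatible'' orbit choice preceding the statement enforces. A secondary, minor point is the passage between $m(\pi, \sigma)$ and $m(\pi, \sigma^{\vee})$, which is handled by the standard symmetry of the Bessel pairing under contragredience.
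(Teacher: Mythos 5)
Your argument is correct and follows the same backbone as the paper's proof: realize $\pi$ as a standard module \eqref{pigp}, extract an irreducible tempered quotient $\sigma^{(0)}$ of $\CJ_{\CO_{\ell_0}}(\pi)$ via Corollary \ref{pp:gtq}, build an induced representation $\sigma_\ell$ of $G_n^{\CO_\ell}(F)$ from $\sigma^{(0)}$ along a parabolic with Levi $\GL_1^{\ell_0-\ell}\times G_n^{\CO_{\ell_0}}$, and invoke Proposition \ref{pro:MW12} twice to deduce $m(\pi,\sigma_\ell^\vee)=m(\pi_0,\sigma^{(0)})\ne 0$, hence $\CJ_{\CO_\ell}(\pi)\ne 0$.

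The one genuine divergence is in the choice of the inducing datum. You induct from characters $|\cdot|^{\beta_1}\otimes\cdots\otimes|\cdot|^{\beta_{\ell_0-\ell}}$ of $\GL_1^{\ell_0-\ell}$ (suggesting all $\beta_i=0$), which produces a representation $\sigma_\ell$ that is tempered but generically reducible, since the trivial character of $\GL_1$ is self-dual. The paper instead takes a single block: a unitary, non-self-dual, irreducible supercuspidal $\tau$ of $\GL_{\ell_0-\ell}(F)$, and sets $\sigma_1=\Ind(\tau\otimes\sigma^{(0)})$. The non-self-duality of $\tau$ forces $\sigma_1$ to be irreducible tempered, so the conclusion of Proposition \ref{pro:MW12} directly produces an irreducible tempered quotient $\sigma_1^\vee$ of $\CJ_{\CO_\ell}(\pi)$. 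Since Proposition \ref{pro:MW12} is stated for (possibly reducible) standard modules and the non-vanishing of $m(\pi,\sigma_\ell^\vee)$ already forces $\CJ_{\CO_\ell}(\pi)\ne 0$ via Lemma \ref{lm:giq}, your choice does prove Proposition \ref{pp:sld}. However, the paper's proof of the stability statement is explicitly designed to simultaneously establish Proposition \ref{th:itq}, which asserts that $\CD_{\CO_\ell}(\pi)$ has an \emph{irreducible tempered} quotient; there, the irreducibility of $\sigma_1$ is what makes the conclusion immediate. With your reducible $\sigma_\ell$ one would have to add the observation that $\sigma_\ell^\vee$ (being tempered of finite length) is semisimple, so a nonzero map $\CJ_{\CO_\ell}(\pi)\to\sigma_\ell^\vee$ still yields an irreducible tempered quotient. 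Finally, a small point of precision: the passage from ``$\sigma^{(0)}$ is a quotient of $\CJ_{\CO_{\ell_0}}(\pi)$'' to $m(\pi,(\sigma^{(0)})^\vee)\ne 0$ rests on $m(\pi,\sigma)=m(\pi,\sigma^\vee)$, which for special orthogonal groups comes from the MVW-type identification $\sigma^\vee\cong\sigma$ (odd case) or $\sigma^\vee\cong\sigma^c$ together with the $c$-invariance of Bessel models, rather than from Theorem \ref{conj:smo} or a symmetry of the pairing as you cite; this is a justification issue, not a gap.
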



\begin{proof}
By Corollary \ref{pp:gtq}, there exists an $F$-rational orbit $\CO_{\ell_0}$ associated to the partition $\udl{p}_{\ell_0}$ such that
the twisted Jacquet module $\CJ_{\CO_{\ell_0}}(\pi)$ has an irreducible quotient $\sig$ belonging to $\Pi_\temp(G_n^{\CO_{\ell_0}})$ and hence
we have that $m(\pi,\sig)\ne 0$.

For any occurrence index $\ell< \ell_0(\pi)$, we want to show that the twisted Jacquet module
$\CJ_{\CO_\ell}(\pi)$ is non-zero for the $F$-rational orbit $\CO_\ell$, which is compatible with $\CO_{\ell_0}$.
Note that the quadratic spaces defining both orthogonal groups $G^{\CO_\ell}_{n}$ and $G^{\CO_{\ell_0}}_{n}$  have the same anisotropic kernel if $\CO_\ell$ is compatible with $\CO_{\ell_0}$.
Let $\tau$ be a unitary non-self-dual irreducible supercuspidal representation of $\GL_{\ell_0-\ell}(F)$.
Define
$$
\sig_1=\Ind^{G^{\CO_\ell}_{n}(F)}_{P_{\ell_0-\ell}(F)}\tau\otimes \sig,
$$
where $P_{\ell_0-\ell}$ is a parabolic subgroup of $G^{\CO_\ell}_{n}$, whose Levi subgroup is isomorphic to $\GL_{\ell_0-\ell}\times G^{\CO_{\ell_0}}_{n}$.
It is clear that $\sig_1$ is an irreducible tempered representation of $G^{\CO_\ell}_{n}(F)$.
By applying Proposition \ref{pro:MW12}, we obtain an identity:
$
m(\pi, \sig^\vee_1)=m(\pi,\sig)\ne 0.
$
Hence the $\ell$-th twisted Jacquet module $\CJ_{\CO_\ell}(\pi)$ has a quotient $\sig^\vee_1$, as representations of $G_n^{\CO_\ell}(F)$.
In particular, the $\ell$-local descent $\CD_{\CO_\ell}(\pi)$ is nonzero, and has the irreducible tempered representation $\sig_1^\vee$ as
a quotient. This finishes the proof.
\end{proof}

\section{Descent of Local $L$-parameters}\label{sec-DLP}

We introduce a notion of the {\sl descent of the local $L$-parameters} and
determine the structure of the descent of local $L$-parameters, which forms one of the technical cores of the proofs of the main results in the paper.
To do so, we have to calculate explicitly the relevant local roots numbers.

Let $\varphi\in{\Phi}_\gen(G_n^*)$, which is a $2n$-dimensional, self-dual local $L$-parameter,
either of orthogonal type or of symplectic type.
For $\chi\in\wh{\CS}_\varphi$, define the {\it $\ell$-th descent} of $(\varphi,\chi)$ for any $\ell\in\{0,1,\cdots,n\}$,
which is denoted by $\FD_\ell(\varphi,\chi)$,
to be the set of generic self-dual local $L$-parameters $\phi$, satisfying the following conditions:
\begin{enumerate}
\item $\phi$ are local $L$-parameters of dimension $2(n-\ell)$ for $G_n^{\CO_\ell}(F)$, which have the type different from that of $\varphi$, and
\item the equation $\chi^\star_\varphi(\varphi,\phi)=\chi$ holds.
\end{enumerate}


\begin{defn}[Descent for $L$-parameters]\label{df:foi-L}
For a parameter $\varphi$ in $\Phi_\gen(G_n)$ and $\chi\in \widehat{\CS}_\varphi$, the first occurrence index $\ell_0:=\ell_0(\varphi,\chi)$ of $(\varphi,\chi)$ is the integer $\ell_0$ in $\{0,1,\dots,n\}$,
such that
$\FD_{\ell_0}(\varphi,\chi)\ne \emptyset$
but for any $\ell\in \{0,1,\dots,n\}$ with $\ell>\ell_0$, $\FD_{\ell}(\varphi,\chi)=\emptyset$.
The $\ell_0$-th descent $\FD_{\ell_0}(\varphi,\chi)$ of $(\varphi,\chi)$ is called
 the first descent of $(\varphi,\chi)$ or simply the descent of $(\varphi,\chi)$.
\end{defn}

Note that $\ell_0(\varphi,\chi)=n$ if and only if $\chi=1$ by convention. As in the local descent on the representation side, this case
will be excluded from the discussion at the first occurrence index, because in this case, the Bessel model becomes the Whittaker model.

Note that by definition $\FD_{\ell}(\varphi,\chi)=\FD_{\ell}(\varphi^c,\chi)$, and hence $\FD_{\ell}(\varphi,\chi)$ is stable under $c$-conjugate.
It follows that $\FD_{\ell}([\varphi]_c,\chi)$ is well-defined, and that $\phi^c\in\FD_{\ell}(\varphi,\chi)$ if and only if
$\phi\in \FD_{\ell}(\varphi,\chi)$.
In fact, the $c$-conjugation preserves the Bessel models, and hence the local descent of representations is stable under $c$-conjugate and
any two $c$-conjugate $L$-parameters have the same local descent.

\subsection{Local root number}

Let $\phi$ be a local $L$-parameter of $G_n$ and $\psi_F$ be a  non-trivial additive character of $F$, as before.
The {\sl local epsilon factor} defined by P. Deligne and J. Tate \cite{Ta77} is given by
\begin{equation}
\varepsilon(s,\phi,\psi_F)=\varepsilon(\frac{1}{2},\phi,\psi_F)q^{a(\phi,\psi)(\frac{1}{2}-s)}
\end{equation}
where $a(\phi,\psi)$ is the Artin conductor of $\phi$ and
$\varepsilon(\frac{1}{2},\phi,\psi_F)$ is the {\sl local root number}.
By \cite[Proposition 15.1]{GR06} and \cite{Ta77}, one also has the following properties of the local epsilon factors:
\begin{itemize}
	\item $\varepsilon(s,\phi,\psi_a)=\det\phi(a)|a|^{-\dim\phi(\frac{1}{2}-s)}\varepsilon(s,\phi,\psi_F)$, where $\psi_a$ is defined by $\psi_a(x)=\psi_F(ax)$;
	\item $\varepsilon(s,\phi\otimes |\cdot|^{s_0},\psi_F)=q_F^{-s_0a(\phi,\psi)}\varepsilon(s,\phi,\psi_F)$;
	\item $\varepsilon(s,\phi,\psi_F)\varepsilon(1-s,\phi^\vee,\psi_F)=\det(\phi)(-1)$; and
	\item $\ve(s,\phi\oplus \phi',\psi_F)=\ve(s,\phi,\psi_F)\ve(s,\phi',\psi_F)$.
\end{itemize}

Denote $\mu_n$ to be the algebraic $n$-dimensional irreducible representation of $\SL_2(\BC)$ in this paper.
Let us decompose $\phi$ as
$$
\phi=\oplus_{n\geq 0}\rho_n\otimes \mu_{n+1},
$$
where $(\rho_n,V_{\rho_n})$ is a semisimple complex representation of $\CW_F$, which may possibly be zero.
By the work of B. Gross and M. Reeder \cite{GR10} for instance,
one has the local epsilon factor
$$
\varepsilon(s,\phi,\psi_F)=\varepsilon(\frac{1}{2},\phi,\psi_F)q^{a(\phi)(\frac{1}{2}-s)},
$$
where
$
a(\phi)=\sum_{n\geq 0}(n+1)a(\rho_n)+\sum_{n\geq 1}n\cdot \dim V^{\CI}_{\rho_n},
$
and
\begin{equation}\label{eq:GR}
\varepsilon(\frac{1}{2},\phi,\psi_F)
=\prod_{n\geq 0}\varepsilon(\frac{1}{2},\rho_n,\psi_F)^{n+1}
\prod_{n\geq 1}\det(-\rho_{n}(\mathrm{Fr})\vert V^{\CI}_{\rho_n})^{n},	
\end{equation}
with $\CI=\CI_F$ being the inertia group and $V_{\rho_n}$ being the vector space defining $\rho_{n}$,
and with $a(\rho_n)$ being the Artin conductor of $\rho_n$.
More details on the normalization of $\psi$ and the Haar measure defining the Artin conductor can be found in \cite[Section 2]{GR10}.

By \cite[Propositions 5.1 and 5.2]{GGP}, if $\phi$ is a self-dual $L$-parameter and $\det(\phi)=1$, then $\varepsilon(\frac{1}{2},\phi,\psi_F)$ is independent of the choice of $\psi_F$ and is simply denoted by $\ve(\phi)$.
Moreover, $\ve(\phi)=\pm 1.$


For example, if $\tau$ is a character of $F^\times$, we may rewrite $\tau$ as $|\cdot|^{s_0}\omega$, where $\omega$ is a unitary character of $F^\times$.
Then
$$
\varepsilon(s,\tau,\psi_F)=q^{n(\frac{1}{2}-s-s_0)}
\frac{g(\bar{\omega},\psi_{\varpi^{-n}})}{|g(\bar{\omega},\psi_{\varpi^{-n}})|},
$$
where $n$ is the conductor of $\omega$ (i.e., $\omega$ is trivial on $1+\varpi^n\Fo_F$ but not trivial on $1+\varpi^{n-1}\Fo_F$)
and the Gauss sum is defined by
$$
g(\omega,\psi_a)=\int_{\Fo^\times_F}\omega(u)\psi_F(au)\ud u.
$$
In particular, if $\tau$ is unramified, then $\varepsilon(s,\tau,\psi_F)=1$.
If $\tau$ is a ramified quadratic character, then
 it is of conductor 1.
Thus $\varepsilon(s,\tau,\psi_F)=\varepsilon(\frac{1}{2},\tau,\psi_F)q^{\frac{1}{2}-s}$
and $\varepsilon^2(\frac{1}{2},\tau,\psi_F)=\tau(-1)$.

More generally, let $\pi=[\tau|\cdot|^{\frac{1-r}{2}},\tau|\cdot|^{\frac{r-1}{2}}]$ be the square-integrable representation of
a general linear group determined by
the line segment of Bernstein and Zelevinsky in \cite{BZ}, with the local $L$-parameter $\varphi_{\tau}\boxtimes \mu_r$.
Here $\varphi_\tau$ is the associated irreducible representation of $\CW_F$ via the local Langlands correspondence for general linear groups.
Denote by $\omega_\pi$ the central character of $\pi$.
If $\tau$ is a quadratic character of $\GL_1(F)$ and $\omega_\pi=1$ (that is, if $\tau$ is non-trivial, then $r$ is even),
then
\begin{equation}\label{eq:eps-tensor-character}
\varepsilon(s,\varphi_{\tau}\boxtimes \mu_r,\psi_F)
=\begin{cases}
	q^{(r-1)(\frac{1}{2}-s)} &\text{ if $\tau=1$ and $r$ is odd},\\
	-\tau(\varpi)q^{(r-1)(\frac{1}{2}-s)} &\text{ if $\tau$ is unramified and $r$ is even}, \\
	\tau(-1)^{\frac{r}{2}}q^{r(\frac{1}{2}-s)}& \text{ if $\tau$ is ramified.}
\end{cases}	
\end{equation}
Remark that under the assumption $\varphi_\tau\boxtimes \mu_r$ is self-dual and $\det(\varphi_\tau\boxtimes \mu_r)=1$.
If $\tau$ is a self-dual supercuspidal representation of $\GL_a(F)$ with $a>1$,
then
\begin{equation}\label{eq:eps-tensor-scsp}
\varepsilon(s,\varphi_{\tau}\boxtimes \mu_r,\psi_F)
=\varepsilon(\frac{1}{2},\tau,\psi_F)^{r}q^{r a(\tau)(\frac{1}{2}-s)}.
\end{equation}


\begin{lem} \label{lm:eps-tensor}
Let $\varphi_1=\varphi_{\pi}\boxtimes\mu_n$ and $\varphi_2=\varphi_{\tau}\boxtimes\mu_m$ be two irreducible local $L$-parameters, with
$\varphi_{\pi}$ and $\varphi_{\tau}$ being self-dual and of dimensions $a$ and $b$, respectively.
\begin{enumerate}
\item If $m$ and $n$ are even, then $\varepsilon(\varphi_1\otimes\varphi_2)=1$.
\item \label{lm:eps-symplectic-odd} If $\varphi_\pi\otimes\varphi_\tau$ is of symplectic type, then
$$
\varepsilon(\varphi_1\otimes\varphi_2)=\begin{cases}
	1, & \text{ when  $m+n$ is odd,}\\
	\varepsilon(\pi\times\tau), & \text{ when  $mn$ is odd.}
\end{cases}
$$
\item \label{lm:eps-ncong} If $\varphi_\pi\otimes\varphi_\tau$ is of orthogonal type
and $\pi\ncong  \tau$, then
$$
\varepsilon(\varphi_1\otimes\varphi_2)=
	\det(\varphi_\pi)(-1)^{\frac{bmn}{2}} \det(\varphi_\tau)(-1)^{\frac{amn}{2}},
$$
when $m+n$ is odd, and
$
\varepsilon(\varphi_1\otimes\varphi_2)=\varepsilon(\pi\times\tau,\psi_F),
$
when $mn$ is odd.
\item \label{lm:eps-cong} If $\varphi_\pi\otimes\varphi_\tau$ is of orthogonal type and $\pi\cong  \tau$, then
\begin{eqnarray*}
\varepsilon(\varphi_1\otimes\varphi_2)
&=&
\det(\varphi_\pi)(-1)^{\frac{bmn}{2}} \det(\varphi_\tau)(-1)^{\frac{amn}{2}}(-1)^{\min\{m,n\}}\\
&=&(-1)^{\min\{m,n\}},
\end{eqnarray*}
when $m+n$ is odd, and
$
\varepsilon(\varphi_1\otimes\varphi_2)=
\varepsilon(\pi\times\tau,\psi_F),
$
when $mn$ is odd.
\end{enumerate}
\end{lem}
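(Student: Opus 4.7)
The plan is to reduce the computation to one about the underlying twist $\rho := \varphi_\pi \otimes \varphi_\tau$ by combining the Clebsch-Gordan decomposition of $\mu_n \otimes \mu_m$ with the Gross-Reeder formula \eqref{eq:GR}. Setting $M := \min(m,n)$ and $r_k := |n-m| + 2k + 1$, Clebsch-Gordan gives
$$
\mu_n \otimes \mu_m = \bigoplus_{k=0}^{M-1}\mu_{r_k}, \qquad \varphi_1 \otimes \varphi_2 = \bigoplus_{k=0}^{M-1}\rho \boxtimes \mu_{r_k}.
$$
For each summand $\rho \boxtimes \mu_{r_k}$ only the term at level $n_0 = r_k - 1$ in the decomposition underlying \eqref{eq:GR} contributes, collapsing the formula to
$$
\varepsilon(\tfrac{1}{2}, \rho \boxtimes \mu_{r_k}, \psi_F) = \varepsilon(\tfrac{1}{2}, \rho, \psi_F)^{r_k}\, \det(-\rho(\Fr)|V_\rho^{\CI})^{r_k - 1}.
$$
Multiplicativity of $\varepsilon$-factors together with $\sum_k r_k = mn$ and $\sum_k (r_k - 1) = mn - M$ yields the master formula
$$
\varepsilon(\tfrac{1}{2}, \varphi_1 \otimes \varphi_2, \psi_F) = \varepsilon(\tfrac{1}{2}, \rho, \psi_F)^{mn}\, \det(-\rho(\Fr)|V_\rho^{\CI})^{mn-M}.
$$
The local Langlands correspondence for $\GL$ identifies $\varepsilon(\tfrac{1}{2}, \rho, \psi_F)$ with $\varepsilon(\pi \times \tau, \psi_F)$.

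The next step is to invoke the general identity $\varepsilon(\tfrac{1}{2}, \rho, \psi_F)^2 = \det\rho(-1) = \det\varphi_\pi(-1)^b \det\varphi_\tau(-1)^a$. When $mn$ is odd, so that $M$ is odd and $mn - M$ is even, the determinant factor collapses to $1$ and the master formula yields $\varepsilon(\pi \times \tau, \psi_F)$, handling parts (2)(b), (3)(b), (4)(b). In part (1), $m, n$ both even forces $mn$ divisible by $4$, whence $\varepsilon(\tfrac{1}{2}, \rho, \psi_F)^{mn} = \det\rho(-1)^{mn/2} = 1$, and a parity argument on $\dim V_\rho^{\CI}$ combined with the self-duality of $\rho$ reduces $\det(-\rho(\Fr)|V_\rho^{\CI})^{mn-M}$ to $1$ as well, giving $\varepsilon = 1$. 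When $m+n$ is odd, $mn$ is still even and $\varepsilon(\tfrac{1}{2}, \rho, \psi_F)^{mn} = \det\varphi_\pi(-1)^{bmn/2} \det\varphi_\tau(-1)^{amn/2}$, already matching the factor appearing on the right-hand side of (3)(a).

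What remains is to evaluate the determinant factor $\det(-\rho(\Fr)|V_\rho^{\CI})^{mn-M}$ in the three cases (2)(a), (3)(a), (4)(a), all with $m+n$ odd. In (2)(a), $\rho$ is symplectic, so $\det\rho(-1) = 1$ and a direct analysis of the symplectic pairing on $V_\rho^{\CI}$ forces the determinant factor to be $1$, yielding $\varepsilon = 1$. In (3)(a), with $\rho$ orthogonal and $\varphi_\pi \ncong \varphi_\tau$, the orthogonal pairing on $V_\rho^{\CI}$ and the distinctness of the irreducibles $\varphi_\pi, \varphi_\tau$ force the determinant factor to be $1$, leaving the stated formula. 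In (4)(a), the self-duality $\varphi_\pi \cong \varphi_\pi^\vee$ produces a canonical isomorphism $\rho \cong \End(V_{\varphi_\pi})$, whose trivial one-dimensional summand has Frobenius acting as the identity; on this line $-\Fr$ contributes $-1$ to $\det(-\rho(\Fr)|V_\rho^{\CI})$, introducing an extra factor of $(-1)^{mn-M}$ relative to (3)(a). Since $mn$ is even this equals $(-1)^{\min(m,n)}$, and the residual $\det\varphi_\pi(-1)^{(a+b)mn/2} = \det\varphi_\pi(-1)^{amn}$ is trivial because $amn$ is even, giving the stated value $(-1)^{\min(m,n)}$.

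The main obstacle is the precise evaluation of $\det(-\rho(\Fr)|V_\rho^{\CI})$ in the orthogonal cases (3)(a) and (4)(a). The key step is to identify $V_\rho^{\CI}$ via the self-duality pairings on $\varphi_\pi$ and $\varphi_\tau$, control the Frobenius eigenvalues on it, and isolate the contribution of the distinguished trivial summand that is present only when $\pi \cong \tau$. Once these determinant factors are pinned down, the identification with the stated formulas reduces to a bookkeeping of signs in the master formula, using the parity constraints on $m$ and $n$.
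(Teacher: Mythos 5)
Your overall strategy---Clebsch--Gordan decomposition of $\mu_n\otimes\mu_m$, then the Gross--Reeder formula \eqref{eq:GR} applied to each summand $\rho\boxtimes\mu_{r_k}$ with $\rho=\varphi_\pi\otimes\varphi_\tau$, yielding the master formula
$$
\varepsilon(\varphi_1\otimes\varphi_2)
=\varepsilon(\tfrac12,\rho,\psi_F)^{mn}\,\det\bigl(-\rho(\mathrm{Fr})\vert V_\rho^{\CI}\bigr)^{mn-M}
$$
---is the same as the paper's, which obtains the equivalent intermediate formula
$\varepsilon(\pi\times\tau,\psi_F)^{mn}\bigl(-\chi(\varpi)^{d(\pi)}\bigr)^{mn-M}$
when $\pi\cong\chi\tau$ for an unramified $\chi$, and
$\varepsilon(\pi\times\tau,\psi_F)^{mn}$ when there is no such $\chi$ (so $V_\rho^\CI=0$). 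Your parts (1), (2)(b), (3)(b), (4)(b) and the symplectic parity argument in (2)(a) are fine.

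The gap is in your evaluation of $\det(-\rho(\mathrm{Fr})\vert V_\rho^{\CI})$ in cases (3)(a) and (4)(a). In (3)(a) you assert that ``the orthogonal pairing on $V_\rho^\CI$ and the distinctness of the irreducibles $\varphi_\pi,\varphi_\tau$ force the determinant factor to be~$1$.'' This does not follow. The orthogonal pairing on $V_\rho^\CI$ only tells you $\det(\mathrm{Fr}\vert V_\rho^\CI)=\pm1$, not that it equals a specific sign, and---more importantly---$\pi\ncong\tau$ does \emph{not} imply $V_\rho^\CI=0$: it is entirely possible that $\pi\cong\chi\tau$ for a nontrivial unramified $\chi$, and then $V_\rho^\CI\cong\chi^{-1}\otimes\End_\CI(V_{\varphi_\tau})$ is nonzero. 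Pinning down the sign in this sub-case requires the fact that $\chi(\varpi)^{d(\pi)}=-1$ precisely when $\pi\ncong\tau$ and $+1$ when $\pi\cong\tau$ (which follows from self-duality of $\pi$, $\tau$: $\chi^2$ lies in the group of unramified twists fixing $\pi$, so $\chi(\varpi)^{d(\pi)}\in\{\pm1\}$, and the $-1$ case forces $\pi\ncong\chi\pi$ hence $\pi\ncong\tau$). Your argument never engages with this, so (3)(a) is not proved.

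In (4)(a) you isolate the trivial line $\BC\cdot\mathrm{Id}\subset\End(V_{\varphi_\pi})^\CI$ and claim that ``$-\mathrm{Fr}$ contributes $-1$'' on it, ``introducing an extra factor of $(-1)^{mn-M}$ relative to (3)(a).'' But $V_\rho^\CI=\End_\CI(V_{\varphi_\pi})$ has dimension $e=d(\pi)$, which may be $>1$; the remaining $(e-1)$-dimensional complement also contributes. The correct computation is that $\mathrm{Fr}$ acts by a cyclic permutation of order $e$ on the $e$ one-dimensional $\CI$-isotype projectors, so $\det(\mathrm{Fr}\vert V_\rho^\CI)=(-1)^{e-1}$ and therefore $\det(-\mathrm{Fr}\vert V_\rho^\CI)=(-1)^e(-1)^{e-1}=-1$ for every $e$. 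Your argument gives the right answer but for the wrong (incomplete) reason; as written it would also fail to explain why the comparison with (3)(a) is legitimate, since (3)(a) generally involves a different $\chi$-twist. The paper sidesteps both issues by invoking the explicit formula (6.2.5) of Bushnell--Kutzko for $\varepsilon((\varphi_\pi\otimes\varphi_\tau)\boxtimes\mu_r)$, which packages the determination of $\det(-\mathrm{Fr}\vert V_\rho^\CI)$ as $-\chi(\varpi)^{d(\pi)}$ and reduces everything to the sign $\chi(\varpi)^{d(\pi)}=\pm1$. To repair your proof you would need either to prove that formula yourself (including the identification $\dim V_\rho^\CI=d(\pi)$ and the cyclic-permutation computation) or cite it as the paper does.
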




\begin{rmk}\label{rk:dependence-psi}
If $mn$ is odd, and $\varphi_\pi$ and $\varphi_\tau$ are of orthogonal type, then $\det(\varphi_\pi\otimes\varphi_\tau)=\pm 1$
and $\varepsilon(\varphi_\pi\otimes\varphi_\tau)$ depends possibly on the additive character $\psi_F$.
Thus, we add $\psi_F$ in the $\varepsilon$ for  this case.
In this case the local root number is calculated in \cite[Theorem 6.2 (2)]{GGP}.
However, after the normalization in \eqref{eq:CE}, the local root number is  independent on $\psi_F$ (see \cite[Theorem 6.2 (1)]{GGP}).
\end{rmk}

\begin{proof}
Since
$\mu_n\otimes\mu_m=\bigoplus^{\min\{m,n\}}_{i=1}\mu_{n+m+1-2i}$, as representations of $\SL_2(\BC)$, one has
$$
\varphi_1\otimes\varphi_2=\bigoplus^{\min\{m,n\}}_{i=1}(\varphi_\pi\otimes\varphi_\tau) \boxtimes \mu_{n+m+1-2i}.
$$
If $\pi\ncong \chi\tau$ for any unramified character $\chi$,
there is no $\varphi_{\pi}\otimes\varphi_{\tau}(\CI)$-invariant vector.
Following \eqref{eq:eps-tensor-scsp}, one has
\begin{equation}\label{eq:eps-tensor-decomp-equal}
\varepsilon(\varphi_1\otimes\varphi_2)=\prod^{\min\{m,n\}}_{i=1}\varepsilon(\pi\times\tau,\psi_F)^{n+m+1-2i}
=\varepsilon(\pi\times\tau,\psi_F)^{mn}.
\end{equation}
Suppose that $\pi\cong \chi\tau$ for some unramified character $\chi$.
Following Equation \eqref{eq:GR} and (6.2.5) in \cite{BK93}, one has
$$
\varepsilon((\varphi_\pi\otimes\varphi_\tau)\boxtimes\mu_r)=\varepsilon(\pi\times\tau,\psi_F)^r(-\chi(\varpi)^{d(\pi)})^{r-1},
$$
where $d(\pi)$ is  the number of all unramified characters $\chi$ such that $\pi\cong \chi\pi$.
And an unramified character $\chi'$ satisfies $\pi\cong \chi'\pi$ if and only if the order of $\chi'$ divides $d(\pi)$.
It follows that
\begin{align}
\varepsilon(\varphi_1\otimes\varphi_2)=&\prod^{\min\{m,n\}}_{i=1}\varepsilon(\pi\times\tau,\psi_F)^{n+m+1-2i}(-\chi(\varpi)^{d(\pi)})^{n+m-2i} \nonumber\\
=&\varepsilon(\pi\times\tau,\psi_F)^{mn}(-\chi(\varpi)^{d(\pi)})^{mn-\min\{m,n\}}. \label{eq:eps-tensor-decomp-neq}
\end{align}
Since $\pi$ and $\tau$ are self-dual, the unramified character $\chi$ has the property that $\pi\cong \chi^2\pi$.
Hence, the order of $\chi^2$ divides $d(\pi)$, equivalently $\chi(\varpi)^{d(\pi)}=\pm 1$.
If $\chi(\varpi)^{d(\pi)}=-1$, then
the order of $\chi$  equals $2d(\pi)$, which implies $\pi\ncong \chi\pi$ and then $\pi\ncong\tau$.
In this case,
$\varepsilon(\varphi_1\otimes\varphi_2)=\varepsilon(\pi\times\tau,\psi_F)^{mn}$.


If  $\chi(\varpi)^{d(\pi)}=1$, then the order $\chi$ divides of $d(\pi)$,
which implies $\pi\cong\chi\tau\cong\tau$ and
\begin{equation}\label{eq:lm:d-even}
\varepsilon(\varphi_1\otimes\varphi_2)=\varepsilon(\pi\times\tau,\psi_F)^{mn}(-1)^{mn-\min\{m,n\}}.	
\end{equation}
By \eqref{eq:eps-tensor-decomp-neq},
one obtains Equation \eqref{eq:lm:d-even}.


Combining with the case $\pi\ncong\chi\tau$ for any unramified  $\chi$,
we summarize
$$
\varepsilon(\varphi_1\otimes\varphi_2)=\begin{cases}
	\varepsilon(\pi\times\tau,\psi_F)^{mn}  &\text{ if }\pi\ncong \tau\\
	\varepsilon(\pi\times\tau,\psi_F)^{mn}(-1)^{mn-\min\{m,n\}}  &\text{ if }\pi\cong \tau.
\end{cases}
$$
Recall that $\varepsilon(\varphi)^2=(\det\varphi)(-1)=\pm 1$
if $\varphi$ is self-dual.
Since $\varphi_\pi\otimes\varphi_\tau$ is self-dual, we have
\begin{equation}\label{eq:eps-square-det}
\varepsilon(\varphi_\pi\otimes\varphi_\tau)^2=\det(\varphi_\pi)(-1)^b\det(\varphi_\tau)(-1)^a.
\end{equation}
If $m$ and $n$ are even, then $4$ divides $mn$ and $\min\{m,n\}$ is even.
By \eqref{eq:eps-tensor-decomp-equal}, \eqref{eq:eps-tensor-decomp-neq} and \eqref{eq:eps-square-det},
we have $\varepsilon(\varphi_1\otimes\varphi_2)=1$.

Suppose that $\varphi_{\pi}\otimes\varphi_{\tau}$ is of symplectic type.
Then $\det(\varphi_{\pi}\otimes\varphi_{\tau})=1$  and
$\varphi_\pi$ and $\varphi_\tau$ are of different type.
If $m+n$ is odd, then $mn$ is even and
$\varepsilon(\pi\times\tau,\psi_F)^{mn}=1$.
As $\varphi_\pi$ and $\varphi_\tau$ are irreducible and of different type,
there is no unramified character $\chi$ satisfying $\pi\cong \chi\tau$.
Therefore, $\varepsilon(\varphi_1\otimes\varphi_2)=1$ if $m+n$ is odd, and
$\varepsilon(\varphi_1\otimes\varphi_2)=\varepsilon(\pi\times\tau)$ if $mn$ is odd.

Suppose that $\varphi_{\pi}\otimes\varphi_{\tau}$ is of orthogonal type.
If $\pi\ncong  \tau$, then, when $mn$ is even,
$$
\varepsilon(\varphi_1\otimes\varphi_2)=(\varepsilon(\pi\times\tau,\psi_F)^2)^{\frac{mn}{2}}=\det(\varphi_\pi)(-1)^{\frac{bmn}{2}}\det(\varphi_\tau)(-1)^{\frac{amn}{2}},
$$
which is independent of the choice of $\psi_F$; and
when $mn$ is odd,
$
\varepsilon(\varphi_1\otimes\varphi_2)=\varepsilon(\pi\times\tau,\psi_F),
$
which depends on the choice of $\psi_F$.

If $\pi\cong  \tau$, then, when $mn$ is even, $\varepsilon(\varphi_1\otimes\varphi_2)$ equals
$$
\det(\varphi_\pi)(-1)^{\frac{bmn}{2}}\det(\varphi_\tau)(-1)^{\frac{amn}{2}}(-1)^{\min\{m,n\}}=(-1)^{\min\{m,n\}};
$$
and when $mn$ is odd,
$
\varepsilon(\varphi_1\otimes\varphi_2)=\varepsilon(\pi\times\tau,\psi_F)
$
as $mn-\min\{m,n\}$ is even.

\end{proof}

In some special cases, the result is simple. The following example is about the quadratic unipotent $L$-parameters, which will be more
explicitly discussed in Section \ref{sec:ex}.

\begin{exmp}\label{ex:eps-tensor-char}
Let $\varphi_\pi=\chi\boxtimes\mu_n$ and $\varphi_\tau=\xi\boxtimes\mu_m$,
where $\chi$ and $\xi$ are quadratic characters.
If $m+n$ is odd, then
$$
\varepsilon(\pi\times\tau)=\begin{cases}
	(-\chi\xi(\varpi))^{\min\{m,n\}}, &\text{ if $\chi\xi$ is unramified,}\\
	\chi\xi(-1)^{\frac{mn}{2}}, & \text{ if $\chi\xi$ is ramified.}
\end{cases}
$$
\end{exmp}

\begin{rmk}\label{lm:chi-star-odd}
Let $\varphi$ and $\phi$ be discrete $L$-parameters of different type.
Then $\chi^\star(\varphi,\phi)=(\chi^\star_\varphi,\chi^\star_\phi)$, with $\chi^\star_\varphi$ and $\chi^\star_\phi$ as defined in \eqref{eq:dist-eps-odd-W} and \eqref{eq:dist-eps-even-W}, yields a pair of characters on $A_\varphi$ and $A_\phi$, which are independent of $\psi_F$.
In fact, one may decompose $\varphi$ and $\phi$ as
$\varphi=\boxplus^r_{i=1}\varphi_i$ and $\phi=\boxplus^s_{j=1}\phi_j$,
where $\varphi_i$ and $\phi_j$ are irreducible and of different type for all $i,j$.
Since  $\varphi_i\otimes \phi_j$ is of symplectic type, $\CE(\varphi_i,\phi)$ and $\CE(\varphi,\phi_j)$ are $\pm 1$ and independent of $\psi_F$.
\end{rmk}


In the remark, we extend the character $\chi^\star(\varphi,\phi)$ to be a character of $A_\varphi\times A_\phi$, which is well-defined and independent of $\psi_F$.
And it is allowed that the orthogonal parameter $\varphi$ or $\phi$ is of odd dimension.
Then the character $\chi^\star(\varphi,\phi)$ is still well-defined.

\subsection{Descent of local $L$-parameters}\label{sec:descent-L-parameters}
The main result in this subsection is to explicitly determine the descent of local $L$-parameters, which is Theorem \ref{pp:dsqlp}.



Let $\varphi$ be a local $L$-parameter of $G_n$ for a square-integrable representation of $G_n(F)$.
It can be decomposed as
\begin{equation}\label{eq:dsphi}
\varphi=\boxplus^{r}_{i=1}\boxplus^{r_i}_{j=1}\rho_i\boxtimes\mu_{2\alpha_{i,j}}\boxplus
\boxplus^{s}_{i=1}\boxplus^{s_i}_{j=1}\varrho_i\boxtimes\mu_{2\beta_{i,j}+1}	
\end{equation}
where all $\rho_i$ and $\varrho_i$ are irreducible self-dual distinct  representations of $\CW_F$ of dimension $a_i$ and $b_i$, respectively,
and
$1\leq \alpha_{i,1}< \alpha_{i,2}<\dots<\alpha_{i,r_i}$
and
$0\leq \beta_{i,1}< \beta_{i,2}<\dots<\beta_{i,s_i}$ for all $i$ are integers.
For such a local $L$-parameter $\varphi$ as in \eqref{eq:dsphi}, we define the even and odd parts according to the dimension of $\mu_k$'s:
\begin{eqnarray}
\varphi_e&:=&\boxplus^{r}_{i=1}\boxplus^{r_i}_{j=1}\rho_i\boxtimes\mu_{2\alpha_{i,j}}
\label{eq:decomp-e}\\
\varphi_o&:=&\boxplus^{s}_{i=1}\boxplus^{s_i}_{j=1}\varrho_i\boxtimes\mu_{2\beta_{i,j}+1}.
\label{eq:decomp-o}
\end{eqnarray}
For an irreducible representation $\rho$ of $\CW_F$, denote by $\varphi(\rho)$ the $\rho$-isotypic component of $\varphi$ when restricted
to $\CW_F$. It is clear that $\varphi(\rho)$ is still a local $L$-parameter.
For instance,
$
\varphi(\rho_i)=\boxplus^{r_i}_{j=1}\rho_i\boxtimes\mu_{2\alpha_{i,j}}.
$
Hence $\varphi(\rho)=0$ if $\rho$ is not isomorphic to any of the $\rho_i$ or $\varrho_i$ for all $i$.


For a local $L$-parameter $\varphi$ of $G_n$ for a square-integrable representation of $G_n(F)$, we decompose it as $\varphi=\boxplus_{i\in \RI} \varphi_i$.
Let $\varphi'$ be a subrepresentation of $\varphi$, i.e., $\varphi'=\boxplus_{i\in \RI'} \varphi_i$ where $\RI'\subseteq \RI$.
For an element $(e_i)\in \CS_\varphi$, denote $(e_i)\vert_{\varphi'}$ to be the elements in $\CS_\varphi$ such that $e_i=0$ for $i\in \RI'$.
For example, the element $1_{\varphi'}$ is given by $e_i=1$ for $i\in \RI'$ and $e_i=0$ for $i\notin \RI'$.

Define the {\it sign alternative index set, $\sgn_{\ast,\rho}(\chi)$}, as follows:
when $\rho\cong\rho_i$ for some $i$,
\begin{equation}\label{sgn-o}
\sgn_{o,\rho_i}(\chi):=\{j\colon 0\leq j< r_i,~  \chi(1_{\rho_i\boxtimes \mu_{2\alpha_{i,j}}\boxplus \rho_i\boxtimes \mu_{2\alpha_{i,j+1}}})=-1\};
\end{equation}
and when $\rho\cong\varrho_i$ for some $i$,
\begin{equation}\label{sgn-e}
\sgn_{e,\varrho_i}(\chi)=\{j\colon 1\leq j< s_i,~  \chi(1_{\varrho_i\boxtimes \mu_{2\beta_{i,j}+1}\boxplus \varrho_i\boxtimes \mu_{2\beta_{i,j+1}+1}})=-1\}.
\end{equation}
By convention, $\alpha_{i,0}=0$.
For each $(\varphi,\chi)$, define
\begin{equation}\label{eq:phi-sgn}
\varphi_{\sgn}=\boxplus^{r}_{i=1}\boxplus_{j\in \sgn_{o,\rho_i}(\chi)}\rho_i\boxtimes\mu_{2\alpha_{i,j}+1}\boxplus
\boxplus^{s}_{i=1}\boxplus_{j\in \sgn_{e,\varrho_i}(\chi)}\varrho_i\boxtimes\mu_{2\beta_{i,j}+2}.	
\end{equation}
Note that $\varphi_{\sgn}$ is uniquely determined by the given local Langlands data $(\varphi,\chi)$, but it may possibly be zero.
If $\varphi_\sgn$ is not zero, then $\varphi_\sgn$ and $\varphi$ are of different type.
It is worthwhile to mention that $\varphi_\sgn$ is a common component of each elements in the local descent $\FD_{\ell}(\varphi,\chi)$ (see Definition \ref{df:foi-L}), whose dimension gives a upper bound for the index  of the first occurrence,
i.e., $2\ell_0\leq \dim\varphi-\dim \varphi_{\sgn}$.
According to Section~\ref{sec:clp}, such types of $L$-parameters are closely related to the cuspidal local $L$-parameters. By \cite{M11} and also 
\cite{Xu17}, their $L$-packets contain at least one irreducible supercuspidal representation.

From $\varphi$, we define two new parameters $\lceil\varphi_o\rceil$  and $\varphi^\dagger$ by
\begin{equation}\label{eq:varphi-o-dagger}
\lceil\varphi_o\rceil=\boxplus^{s}_{i=1} \varrho_i\boxtimes\mu_{2\beta_{i,s_i}+1}\ \text{and}\
\varphi^\dagger=\boxplus^s_{i=1}\varrho_i\boxtimes 1,	
\end{equation}
respectively. Note that for each $i$, the piece $\varrho_i\boxtimes\mu_{2\beta_{i,s_i}+1}$ is the one with maximal dimension among the
summands $\varrho_i\boxtimes\mu_{2\beta_{i,j}+1}$'s for $j=1,2,\cdots,s_i$.
Both $\lceil\varphi_o\rceil$ and $\varphi^\dagger$ are of the same type as $\varphi$, and are possibly of orthogonal type and have odd dimension.
Define that  $\chi_{\lceil\varphi_o\rceil}=\chi\vert_{\lceil\varphi_o\rceil}$, the restriction of $\chi$ on the elements $((e_i)\vert_{\lceil\varphi_o\rceil})$, is a character of $A_{\lceil\varphi_o\rceil}$.
Also $\chi_{\lceil\varphi_o\rceil}$ is considered as a character of $\CS_{\lceil\varphi_o\rceil}$ by restriction.

Note that there is an isomorphism from $A_{\lceil\varphi_o\rceil}$ to $A_{\varphi^\dagger}$ given by $(e_i)\in A_{\lceil\varphi_o\rceil}\mapsto (e'_i)\in A_{\varphi^\dagger}$ and $e_i=e'_i$,
where  $e_i$ and $e'_i$ correspond to the component $\varrho_i\boxtimes \mu_{2\beta_{i,s_i}+1}$ in $A_{\lceil\varphi_o\rceil}$ and $\varrho_i\boxtimes 1$ in $A_{\varphi^\dagger}$, respectively.
Hence we have that $\CS_{\lceil\varphi_o\rceil}\cong \CS_{\varphi^\dagger}$.


For a local generic $L$-parameter $\varphi$ with the decomposition \eqref{edec}, define its discrete part  by
\begin{equation}\label{eq:discrete}
\varphi_{\Box}=\boxplus_{i\in \RI_\gp}\varphi_i,
\end{equation}
which is a discrete $L$-parameter.

\begin{thm}\label{pp:dsqlp}
Let $\varphi$ be a generic $L$-parameter of even dimension with the discrete part $\varphi_\Box$ of the decomposition \eqref{eq:dsphi}.
Then for $\chi\in\widehat{\CS}_\varphi$, the descent of the parameter $(\varphi,\chi)$ at the first occurrence index $\ell_0$ can be
completely determined by the following
$$
\FD_{\ell_0}(\varphi,\chi)=\cup_{\psi}[ \psi\boxplus\varphi_\sgn]_c,
$$
where $\psi$ runs through over all discrete $L$-parameters satisfying the following conditions with the minimal dimension:
\begin{enumerate}
	\item\label{item:prop-dim} $\dim\psi\equiv \sum^{r}_{i=1}\#\sgn_{o,\rho_i}(\chi)\cdot \dim\rho_i (\bmod{2})$;
	\item\label{item:prop-1} $\psi=(\boxplus^k_{i=1}\delta_i\boxtimes 1) \boxplus(\boxplus^r_{i=1}m_i\rho_i\boxtimes\mu_{2\alpha_{i,r_i}+1})\boxplus(\boxplus^s_{j=1}n_j\varrho_j\boxtimes\mu_{2\beta_{j,s_j}+2})$ with the multiplicity $m_i,n_j\in\{0,1\}$, which are multiplicity-free;
	\item \label{item:prop-2} all $\delta_i$ and $\rho_i$ are of the same type as $\psi$ and
	$$\{\delta_i\colon 1\leq i\leq k\}\cap \{\rho_i\colon 1\leq i\leq r\}=\emptyset;$$
	\item \label{item:prop-3}
	$\chi_{\lceil\varphi_o\rceil}\vert_{\CS_{\lceil\varphi_o\rceil}}=\chi^\star_{\varphi^\dagger}\vert_{\CS_{\varphi^\dagger}}$ via  $\CS_{\lceil\varphi_o\rceil}\cong \CS_{\varphi^\dagger}$, where $\chi^\star_{\varphi^\dagger}$ is the character in $\chi^\star(\varphi^\dagger,\psi)$.
\end{enumerate}
\end{thm}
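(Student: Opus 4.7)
The plan is to translate the sign equation $\chi^\star_\varphi(\varphi,\phi)=\chi$ into a system of equations indexed by the irreducible summands of $\varphi_\Box$ and $\phi$, and then use the explicit root-number formulas of Lemma~\ref{lm:eps-tensor} to identify the minimal discrete $\phi$ of opposite type to $\varphi$ that solves this system. The factorization $\CE(\varphi_i,\phi)=\det(\varphi_i)(-1)^{\dim\phi/2}\prod_k\varepsilon(\tfrac{1}{2},\varphi_i\otimes\phi_k)$ over the irreducible summands $\phi_k$ of $\phi$ will be the main tool. As a first reduction, $\phi$ must be discrete at the first occurrence index, since any non-discrete pair $\tau|\cdot|^s\oplus\tau^\vee|\cdot|^{-s}$ in $\phi$ contributes a trivial factor to $\CE(\varphi_i,\phi)$ (by duality of local root numbers for self-dual $\varphi_i$), so it can be dropped without violating $\chi^\star_\varphi(\varphi,\phi)=\chi$, contradicting minimality of $\dim\phi$ at $\ell_0$.

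Next I would isolate the unavoidable part $\varphi_\sgn$ by an isotypic analysis. For each $\rho_i$, a component $\phi_k=\rho_i\boxtimes\mu_m$ with $m$ odd contributes a factor $(-1)^{\min\{m,2\alpha_{i,j}\}}$ to $\CE(\rho_i\boxtimes\mu_{2\alpha_{i,j}},\phi)$ by Lemma~\ref{lm:eps-tensor}(4), which equals $-1$ exactly when $m<2\alpha_{i,j}$; so such a $\phi_k$ produces a sign flip at each $j$ with $m<2\alpha_{i,j}$, and no flip at higher $j$. Reading off the sign alternation pattern of $\chi$ on $\varphi(\rho_i)$ via $\sgn_{o,\rho_i}(\chi)$ forces $\phi$ to include at least one summand $\rho_i\boxtimes\mu_m$ with $m$ odd in each open interval $(2\alpha_{i,j},2\alpha_{i,j+1})$ for $j\in\sgn_{o,\rho_i}(\chi)$; the minimal such choice is $m=2\alpha_{i,j}+1$. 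A parallel analysis on the $\varrho_i$-isotypic block, using even $m$, produces the pieces $\varrho_i\boxtimes\mu_{2\beta_{i,j}+2}$. Together these summands comprise $\varphi_\sgn$, which is thus a common summand of every $\phi\in\FD_{\ell_0}(\varphi,\chi)$.

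Writing $\phi=\varphi_\sgn\boxplus\psi$, the residual sign conditions transfer to $\psi$. The signs of the top even-$\mu$ pieces $\chi(1_{\rho_i\boxtimes\mu_{2\alpha_{i,r_i}}})$ and of the top odd-$\mu$ pieces $\chi(1_{\varrho_j\boxtimes\mu_{2\beta_{j,s_j}+1}})$ are not yet controlled by $\varphi_\sgn$ and force the optional summands $\rho_i\boxtimes\mu_{2\alpha_{i,r_i}+1}$ and $\varrho_j\boxtimes\mu_{2\beta_{j,s_j}+2}$ in $\psi$ with multiplicities $m_i,n_j\in\{0,1\}$, giving condition~\eqref{item:prop-1}. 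Any residual signs not absorbed by these optional pieces are realized by additional minimal-dimensional discrete summands $\delta_i\boxtimes 1$, where the $\delta_i$ must be distinct from the $\rho_i$ (condition~\eqref{item:prop-2}) to preserve discreteness of $\phi$ and the minimality of $\dim\psi$. Finally, condition~\eqref{item:prop-3} emerges from comparing the sign contributions on the top odd-$\mu$ components: for each $\phi_k$ in $\psi$, the ratio $\CE(\varrho_i\boxtimes\mu_{2\beta_{i,s_i}+1},\phi_k)/\CE(\varrho_i,\phi_k)$ involves factors that cancel against the $\varphi_\sgn$ contribution, so the residual condition simplifies to equating $\chi_{\lceil\varphi_o\rceil}|_{\CS_{\lceil\varphi_o\rceil}}$ and $\chi^\star_{\varphi^\dagger}|_{\CS_{\varphi^\dagger}}$ under the natural identification $\CS_{\lceil\varphi_o\rceil}\cong\CS_{\varphi^\dagger}$.

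The main obstacle will be establishing minimality of $\dim\psi$ and deriving the precise form of condition~\eqref{item:prop-3}. Minimality requires ruling out any smaller discrete $\phi$ of opposite type producing the desired sign pattern through indirect cancellations across different isotypic blocks or between the mandatory $\varphi_\sgn$ and the optional top-up pieces; this hinges on the distinctness of the $\rho_i$, $\varrho_j$ and the parity constraint~\eqref{item:prop-dim}. Verifying~\eqref{item:prop-3} demands a delicate $\varepsilon$-factor bookkeeping for tensor products $\varrho_i\boxtimes\mu_{2\beta_{i,s_i}+1}\otimes\phi_k$ across all possibilities for $\phi_k$, invoking Lemma~\ref{lm:eps-tensor} systematically and exploiting the observation that the twist by $\mu_{2\beta_{i,s_i}+1}$ modifies $\varepsilon$ only by factors controlled by $\mu$-dimensions, which vanish on the relevant subquotient $\CS_{\lceil\varphi_o\rceil}$.
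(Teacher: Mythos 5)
Your proposal takes essentially the same approach as the paper's proof: translate $\chi^\star_\varphi(\varphi,\phi)=\chi$ into sign constraints on each isotypic block via the explicit root-number formulas, isolate the mandatory component $\varphi_\sgn$ from the alternation pattern of $\chi$, and then characterize the remaining $\psi$. Working directly from Lemma~\ref{lm:eps-tensor} rather than through the packaged Lemma~\ref{lm:key-lem} is not a genuinely different route, since Lemma~\ref{lm:key-lem} is precisely the consolidation of those $\varepsilon$-computations and you would have to reproduce it inline.

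That said, there are two concrete gaps. First, your claim that the signs $\chi(1_{\rho_i\boxtimes\mu_{2\alpha_{i,r_i}}})$ ``are not yet controlled by $\varphi_\sgn$'' and thereby force the optional summands $\rho_i\boxtimes\mu_{2\alpha_{i,r_i}+1}$ in $\psi$ is not right. By the formula $\chi^\star_\varphi(1_{\rho_i\boxtimes\mu_{2\alpha_{i,r_i}}})=(-1)^{\#\phi^{<\alpha_{i,r_i}}_o(\rho_i)}$ (the $\det$ factor is trivial because $\dim\phi$ is even), minimality already forces this to equal $(-1)^{\#\sgn_{o,\rho_i}(\chi)}$, so it is determined by $\varphi_\sgn$; the top-up piece $\rho_i\boxtimes\mu_{2\alpha_{i,r_i}+1}$ lies outside the range of that counting function. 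Its actual role is to flip the parity of $\#\phi_o(\rho_i)$, which affects the $\varrho_i$-block constraints through $\prod_l\CE(\varrho_i,\varrho'_l)^{s'_l}$, not the $\rho_i$-block ones, so your narrative mis-attributes which sign conditions force which summands. Second and more seriously, you leave the verification of condition~(4) — the equivalence $\chi|_{\CS_{\varphi_o}}=\chi^\star_\varphi|_{\CS_{\varphi_o}}$ if and only if $\chi_{\lceil\varphi_o\rceil}=\chi^\star_{\varphi^\dagger}$ under $\CS_{\lceil\varphi_o\rceil}\cong\CS_{\varphi^\dagger}$ — entirely undone, gesturing only at a cancellation heuristic about ratios of $\CE$'s. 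This is the technical core of the theorem; the paper proves it by first establishing the identity $\chi^\star_\varphi((e_{i,j})\vert_{\varphi_o})=\chi^\star_{\varphi^\dagger}((\sum_j e_{i,j}))\cdot\prod_{i,j}((-1)^{\#\varphi^{>\beta_{i,j}}_{\sgn,e}(\varrho_i)})^{e_{i,j}}$, and then checking it on generators of $\CS_{\varphi_o}$ with separate cases according to the parity of $\dim\varrho_i$. Without proving that identity or an equivalent, the argument stops precisely where the theorem's content lies, and the minimality step (excluding smaller $\phi$ that could work via cross-block cancellations) is similarly flagged but not established.
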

Note that $\varphi_\sgn$, $\varphi^\dagger$ and $\lceil\varphi_o\rceil$ are associated to $\varphi_\Box$.
Recall that $[\phi]_c=\{\phi,\phi^c\}$ is the $c$-conjugacy class of $\phi$,
and $\chi^\star(\varphi^\dagger,\psi)$ is well-defined even though $\varphi^\dagger$ or $\psi$ is of orthogonal type of odd dimension,
following from Remark \ref{lm:chi-star-odd}.
In addition, $\psi=0$ is allowed.
We will see some examples in Section \ref{sec:ex}.

Now, let us sketch the proof of Theorem \ref{pp:dsqlp}.
First,  we only need to calculate the character $\chi^\star(\varphi,\phi)$ defined in \eqref{eq:dist-eps-odd-W} and \eqref{eq:dist-eps-even-W} for general discrete parameters $\varphi$ and $\phi$ by Corollary \ref{pp:gtq}.
Following from Lemma \ref{lm:eps-tensor} and their decompositions of form \eqref{eq:dsphi},
we may reduce $\chi^\star(\varphi,\phi)$ to the symplectic root numbers of type $\CE(\varrho_i,\varrho'_\ell)$,
where $\varrho_i$ and $\varrho'_\ell$ are irreducible self-dual distinct representations of  $\CW_F$ and are of different type.
The formula is stated in Lemma  \ref{lm:key-lem} in below.

Next, at the first occurrence $\ell_0$, we have the property that the descent of the parameter $(\varphi,\chi)$ has the minimal dimension such that $\FD_{\ell_0}(\varphi,\chi)$ is not empty.
Following this property, we apply Lemma \ref{lm:key-lem} repeatedly on the descent parameters in $\FD_{\ell_0}(\varphi,\chi)$ and
then we give a refined description on those descent parameters, which are Theorem \ref{pp:dsqlp}.
Its proof will be given in Section \ref{sec:proof}.

The rest of Section \ref{sec:descent-L-parameters} is devoted to calculate the character $\chi^\star_\varphi(\varphi,\phi)$ for two  discrete parameters $\varphi$ and $\phi$. Due to symmetry, the formula for $\chi^\star_\phi(\varphi,\phi)$ is similar.
In order to state Lemma \ref{lm:key-lem}, we introduce more notation first.

For an $L$-parameter $\varphi$ with the decomposition \eqref{eq:dsphi},
define
\begin{equation}\label{diamond}
\varphi^{\diamond m}_*(\rho):=\begin{cases}
	\boxplus_{\{j\colon 1\leq j\leq r_i,~\alpha_{i,j}\diamond m\}}\rho_i\boxtimes\mu_{2\alpha_{i,j}}, &\text{ if $\rho\cong\rho_i$ for some $i$}\\
	\boxplus_{\{j\colon 1\leq j\leq s_i,~\beta_{i,j}\diamond m\}}\varrho_i\boxtimes\mu_{2\beta_{i,j}+1}, &\text{ if $\rho\cong\varrho_i$ for some $i$}\\
	0, & \text{ otherwise,}
\end{cases}
\end{equation}
where $\diamond\in\{>,<,\geq,\leq\}$;
$*=e$ if $\varphi$ and $\rho$ are of the different type, and $*=o$, otherwise.
Also define
\begin{equation}\label{minus}
\varphi^-_*(\rho)=\varphi_*\boxminus\varphi_*(\rho),
\end{equation}
which is the remaining part of the parameter $\varphi_*$ without the $\rho$-isotypic component $\varphi_*(\rho)$.
For a subrepresentation $\varphi'$ of $\varphi$,
write $\#\varphi'$ to be the number of the irreducible summands in $\varphi'$.

We decompose $\phi$  as in \eqref{eq:dsphi}, i.e.,
\begin{equation}\label{eq:dsphi-varphi}
\phi=\boxplus^{r'}_{i=1}\boxplus^{r'_i}_{j=1}\rho'_i\boxtimes\mu_{2\alpha'_{i,j}}\boxplus
\boxplus^{s'}_{i=1}\boxplus^{s'_i}_{j=1}\varrho'_i\boxtimes\mu_{2\beta'_{i,j}+1},
\end{equation}
where $\rho'_i$ (resp. $\varrho'_i$) is of dimension $a'_i$ (resp. $b'_i$).

\begin{lem}\label{lm:key-lem}
Let $\varphi$ and $\phi$ be discrete $L$-parameters decomposed as in \eqref{eq:dsphi} and \eqref{eq:dsphi-varphi} respectively, and be  of different type.
Then as the character of $\CS_\varphi$,
\begin{align}
\chi^\star_\varphi((e_{i,j})\vert_{\varphi_e})=&
\prod^{r}_{i=1}\prod^{r_{i}}_{j=1}
(\det(\rho_i)(-1)^{\alpha_{i,j}\dim\phi}
(-1)^{\#\phi^{<\alpha_{i,j}}_o(\rho_i)})^{e_{i,j}}\label{eq:lm:chi-star-e}\\
\chi^\star_\varphi((e_{i,j})\vert_{\varphi_o})=&\prod^{s}_{i=1}
\prod^{s_{i}}_{j=1}
((\prod^{s'}_{l=1}
\CE(\varrho_{i},\varrho'_{l})^{s'_l})
(-1)^{\#\phi^{>\beta_{i,j}}_e(\varrho_i)})^{e_{i,j}},\label{eq:lm:chi-star-o}
\end{align}
where $\CE(\cdot,\cdot)$ is defined in \eqref{eq:CE}.
\end{lem}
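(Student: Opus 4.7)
The plan is to reduce the computation to a componentwise check and then apply Lemma~\ref{lm:eps-tensor} case by case, carefully collecting signs. Since $\chi^\star_\varphi$ is defined on the generators of $A_\varphi$ (one generator per irreducible constituent of $\varphi$), it suffices to evaluate $\CE(\varphi_{i,j},\phi)$ on each summand $\varphi_{i,j}=\rho_i\boxtimes\mu_{2\alpha_{i,j}}$ or $\varrho_i\boxtimes\mu_{2\beta_{i,j}+1}$. By multiplicativity of the Rankin--Selberg $\varepsilon$-factor, $\varepsilon(\pi^{\GL}(\varphi_{i,j})\times\pi^{\GL}(\phi))$ breaks into a product over the irreducible summands $\phi_{k,l}$ of $\phi$, and I apply Lemma~\ref{lm:eps-tensor} to each factor.

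The case analysis is driven by the parities of the $\SL_2$-dimensions and the type of $\rho_i\otimes\rho'$. Since $\varphi$ and $\phi$ have different type, the Weil components $\rho_i$ (same type as $\phi$) and $\varrho'_l$ (same type as $\phi$) are of equal type, so $\rho_i\otimes\varrho'_l$ is orthogonal; similarly $\varrho_i\otimes\rho'_l$ is orthogonal, while the mixed tensors $\rho_i\otimes\rho'_l$ and $\varrho_i\otimes\varrho'_l$ are symplectic. In the even case $\varphi_{i,j}=\rho_i\boxtimes\mu_{2\alpha_{i,j}}$: tensoring with each summand of $\phi_e$ has both $\SL_2$-dimensions even and contributes $1$ by part~(1) of Lemma~\ref{lm:eps-tensor}; tensoring with each $\varrho'_l\boxtimes\mu_{2\beta'_{l,k}+1}\subset\phi_o$ falls into part~(3) or (4) (orthogonal tensor, $m+n$ odd), producing a factor $\det(\rho_i)(-1)^{\dim\varrho'_l\cdot\alpha_{i,j}}\det(\varrho'_l)(-1)^{\dim\rho_i\cdot\alpha_{i,j}}$ plus a supplementary $(-1)^{\min\{2\alpha_{i,j},2\beta'_{l,k}+1\}}$ when $\rho_i\cong\varrho'_l$.

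The collection step is the key bookkeeping. Using that $\rho_i$ is necessarily even-dimensional (it is paired with an even-dimensional $\mu$ to yield a piece of $\varphi$'s type, forcing $\rho_i$ to be symplectic in the orthogonal case and, in the symplectic case, the $\det(\varrho'_l)(-1)$ contribution collapses mod~$2$), the $\det(\varrho'_l)(-1)$ factors disappear, while the $\det(\rho_i)(-1)$ factors accumulate over all of $\phi_o$ to $\det(\rho_i)(-1)^{\alpha_{i,j}\dim\phi_o}$; since $\dim\phi_e$ is even this equals $\det(\rho_i)(-1)^{\alpha_{i,j}\dim\phi}$. The supplementary $(-1)^{\min\{\cdot,\cdot\}}$ is nontrivial precisely when $\beta'_{l,k}<\alpha_{i,j}$, which counts exactly $\#\phi_o^{<\alpha_{i,j}}(\rho_i)$ summands and yields the final sign. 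Finally, the normalization $\det(\varphi_{i,j})(-1)^n$ in \eqref{eq:CE} is trivial on an even-$\mu$ summand because $\det(\rho_i\boxtimes\mu_{2\alpha_{i,j}})=(\det\rho_i)^{2\alpha_{i,j}}=1$, giving formula~\eqref{eq:lm:chi-star-e}.

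The odd case $\varphi_{i,j}=\varrho_i\boxtimes\mu_{2\beta_{i,j}+1}$ proceeds by the symmetric bookkeeping: tensors with $\phi_e$ (orthogonal, $m+n$ odd) fall into parts~(3)/(4) of Lemma~\ref{lm:eps-tensor}, where the "$\rho_i\cong\varrho'_l$" correction is replaced by a "$\varrho_i\cong\rho'_l$" correction and gives $(-1)^{\min\{2\beta_{i,j}+1,2\alpha'_{l,k}\}}$, nontrivial precisely when $\alpha'_{l,k}>\beta_{i,j}$, producing $(-1)^{\#\phi_e^{>\beta_{i,j}}(\varrho_i)}$; tensors with each $\varrho'_l\boxtimes\mu_{2\beta'_{l,k}+1}$ fall into part~(2) with $mn$ odd, yielding the symplectic root number, which after the $\det(\varrho_i)(-1)^n$ normalization in \eqref{eq:CE} becomes precisely $\CE(\varrho_i,\varrho'_l)$; this is independent of $k$, so the product over the $s'_l$ values of $k$ contributes $\CE(\varrho_i,\varrho'_l)^{s'_l}$, giving \eqref{eq:lm:chi-star-o}. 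The main obstacle is the meticulous tracking of the $\det(-1)^{\cdot}$ normalizations alongside the $(-1)^{\min}$ corrections when the Weil constituents coincide; once self-duality of $\det\rho_i$ and the parity of the relevant dimensions are used systematically, the telescoping into $\alpha_{i,j}\dim\phi$ and into $\#\phi^{<\alpha_{i,j}}_o(\rho_i)$ (resp.\ $\#\phi^{>\beta_{i,j}}_e(\varrho_i)$) is forced.
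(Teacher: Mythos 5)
Your overall strategy matches the paper's: evaluate $\chi^\star_\varphi$ on generating elements of $\CS_\varphi$, split $\phi$ into its even and odd parts and further by $\CW_F$-isotype, then apply Lemma~\ref{lm:eps-tensor} factor by factor. The case analysis you sketch (type of $\rho_i\otimes\varrho'_l$ versus $\rho_i\otimes\rho'_l$, parity of $m+n$ and $mn$, the supplementary $(-1)^{\min}$ when the Weil components coincide giving $\#\phi^{<\alpha}_o(\rho_i)$ resp.\ $\#\phi^{>\beta}_e(\varrho_i)$) is the same bookkeeping the paper performs. So the skeleton is sound.

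There are, however, two genuine gaps in the execution. First, you claim that the $\CE$-normalization is simply $\det(\varphi_{i,j})(-1)^{n}$ and hence trivial on an even-$\mu$ summand because $\det(\rho_i\boxtimes\mu_{2\alpha})=1$. But the normalization in the sense of \eqref{eq:CE} (and of \cite[\S 6]{GGP}, which the paper follows) involves \emph{both} determinants: $\CE(A,B)$ carries both $\det(A)(-1)^{\dim B/2}$ and $\det(B)(-1)^{\dim A/2}$. When $\varphi$ is symplectic (so $\phi$, $\varrho'_l$ are orthogonal and $\rho_i$ is an \emph{orthogonal} Weil constituent, hence possibly of odd dimension $a$), the factor $\det(\phi)(-1)^{a\alpha}$ from the normalization is not trivial. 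Your accompanying claim that "$\rho_i$ is necessarily even-dimensional" holds only when $\varphi$ is orthogonal; in the symplectic case the $\det(\varrho'_l)(-1)^{a\alpha}$ contributions from the $\varepsilon$-factors accumulate to exactly $\det(\phi)(-1)^{a\alpha}$ and do \emph{not} "disappear" or "collapse mod~$2$". The reason the final formula \eqref{eq:lm:chi-star-e} nonetheless has no $\det\phi$ is that this accumulated factor cancels precisely against the $\det(\phi)(-1)^{a\alpha}$ coming from the $\CE$-normalization — a cancellation your argument never performs because it drops both terms for the wrong reasons. As written, your derivation in the symplectic case proves the right answer only by two compensating omissions.

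Second, and more structurally, the lemma asserts a formula on $\CS_\varphi$, not on the full component group $A_\varphi$; as the paper itself notes immediately after the statement, over $A_\varphi$ the odd-part formula \eqref{eq:lm:chi-star-o} would be different. When $\varphi$ is orthogonal and contains odd-dimensional orthogonal constituents $\varrho\boxtimes\mu_{2\beta+1}$, the single generator $1_{\varrho\boxtimes\mu_{2\beta+1}}$ lies in $A_\varphi$ but \emph{not} in $\CS_\varphi$. The paper therefore evaluates on paired generators $1_{\varrho\boxtimes\mu_{2\beta+1}\,\boxplus\,\varrho_0\boxtimes\mu_{2\beta_0+1}}$ (its "Type (3)"), so that the $\det(-1)$ prefactors from the two odd-dimensional pieces combine to give something well defined on $\CS_\varphi$ without appealing to an extension to $A_\varphi$. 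Your proposal treats every irreducible constituent as a separate generator and never distinguishes odd- from even-dimensional Weil parts, so it silently proves a statement about a (possibly ill-defined) extension to $A_\varphi$ rather than the statement about $\CS_\varphi$ actually claimed. You should either evaluate on the paired generators as the paper does, or explicitly invoke the extension of $\chi^\star$ to $A_\varphi\times A_\phi$ from the remark preceding the lemma and justify why it is consistent with \eqref{eq:CE}.
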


It is clear that $A_\varphi\cong A_{\varphi_e}\times A_{\varphi_o}$ and
 $\CS_\varphi\cong A_{\varphi_e}\times \CS_{\varphi_o}$.
The isomorphism is given by $(e_i)\mapsto ((e_i)\vert_{\varphi_e},(e_i)\vert_{\varphi_o})$.
Equations \eqref{eq:lm:chi-star-e} and \eqref{eq:lm:chi-star-o} give a formula for all values of $\chi^\star_\varphi$ on $\CS_\varphi$.
Over $A_\varphi$, the formula \eqref{eq:lm:chi-star-o} will be slightly different.
And note that $\rho_i$ in \eqref{eq:dsphi} and $\rho'_i$ in \eqref{eq:dsphi-varphi} are of different types, so are $\varrho_i$ and $\varrho'_i$.

\begin{proof}
To prove this lemma, we evaluate $\chi_\varphi^\star$ at three types of elements in $\CS_\varphi$: {\bf Type (1):} $1_{\rho\boxtimes\mu_{2\alpha}}$;
{\bf Type (2):} $1_{\varrho\boxtimes\mu_{2\beta+1}}$ where $\dim\varrho$ is even; and
{\bf Type (3):} $1_{\varrho\boxtimes\mu_{2\beta+1}\boxplus\varrho_0\boxtimes \mu_{2\beta_0+1}}$ where $\dim\varrho$ and $\dim \varrho_0$ are odd.
For each type, the evaluation is reduced to the calculation of the symplectic roots of form $\varepsilon((\varsigma\boxtimes\mu_\kappa)\otimes\phi)$, where $\varsigma\boxtimes\mu_\kappa$ is the summand occurring in the subscript of above types.
From \eqref{eq:decomp-e}, \eqref{eq:decomp-o}, and \eqref{minus}, we may write that
$\phi=\phi_e\boxplus\phi_o=\phi_e\boxplus\phi_o(\varsigma)\boxplus\phi_o^-(\varsigma)$.
Then
\begin{equation}\label{eq:epsilon-decomp}
\varepsilon((\varsigma\boxtimes\mu_\kappa)\otimes\phi)=\varepsilon((\varsigma\boxtimes\mu_{\kappa})\otimes\phi_e)
\cdot \varepsilon((\varsigma\boxtimes\mu_{\kappa})\otimes\phi_o(\varsigma))
\cdot \varepsilon((\varsigma\boxtimes\mu_{\kappa})\otimes\phi^-_o(\varsigma)).
\end{equation}
Finally, we apply Lemma \ref{lm:eps-tensor} to calculate each factor on the right hand side of \eqref{eq:epsilon-decomp}.

{\bf Type (1).}
We verify Equation \eqref{eq:lm:chi-star-e}.
Consider a  summand of form $\rho\boxtimes\mu_{2\alpha}$ in $\varphi$.
Write $a=\dim\rho$.
Since the dimension of $\rho\boxtimes\mu_{2\alpha}$ is even,
$\apair{(e_i)\vert_{\rho\boxtimes\mu_{2\alpha}}}\cong \BZ_2$ is a subgroup of $\CS_\varphi$
 in both symplectic and  orthogonal types.
For all types, it is sufficient to show
\begin{equation}\label{eq:prop:even-summand}
\chi^\star_{\varphi}(1_{\rho\boxtimes\mu_{2\alpha}})=\det(\rho)(-1)^{\alpha\dim\phi}(-1)^{\#\phi^{<\alpha}_o(\rho)}.	
\end{equation}
Here $1_{\rho\boxtimes\mu_{2\alpha}}$ corresponds to the non-trivial element in $\apair{(e_i)\vert_{\rho\boxtimes\mu_{2\alpha}}}$.

By definition, if   $\varphi$ is of symplectic type,
then
\begin{equation}\label{eq:prop:even-B}
\chi^\star_{\varphi}(1_{\rho\boxtimes\mu_{2\alpha}}) = \det(\phi)(-1)^{a\alpha}
\ve((\rho\boxtimes\mu_{2\alpha})\otimes\phi);	
\end{equation}
and if $\varphi$ is of orthogonal type, then
$
\chi^\star_{\varphi}(1_{\rho\boxtimes\mu_{2\alpha}}) = \ve((\rho\boxtimes\mu_{2\alpha})\otimes\phi),
$
as $\det(\rho)=1$.
Similar to \eqref{eq:epsilon-decomp}, we have $\ve((\rho\boxtimes\mu_{2\alpha})\otimes\phi)$ equal to
$$
\varepsilon((\rho\boxtimes\mu_{2\alpha})\otimes\phi_e)
\cdot \varepsilon((\rho\boxtimes\mu_{2\alpha})\otimes\phi_o(\rho))
\cdot \varepsilon((\rho\boxtimes\mu_{2\alpha})\otimes\phi^-_o(\rho)).
$$
As discussed in Remark \ref{rk:dependence-psi}, the local root number in this case is independent of the choice of the additive character $\psi_F$.
We omit $\psi_F$ in the calculation.
By Lemma \ref{lm:eps-tensor}, $\varepsilon((\rho\boxtimes\mu_{2\alpha})\otimes(\rho'_i\boxtimes\mu_{2\alpha'_{i,j}}))=1$.
It follows that $\varepsilon((\rho\boxtimes\mu_{2\alpha})\otimes\phi_e)=1$.

Next, we calculate $\varepsilon((\rho\boxtimes\mu_{2\alpha})\otimes\phi_o(\rho))$.
By definition in \eqref{diamond}, we may write
$
\phi_o(\rho)=\phi^{<\alpha}_o(\rho)\boxplus\phi^{\geq\alpha}_o(\rho).
$
It follows that
$$
\varepsilon((\rho\boxtimes\mu_{2\alpha})\otimes\phi_o(\rho))
=
\varepsilon((\rho\boxtimes\mu_{2\alpha})\otimes\phi^{<\alpha}_o(\rho))
\cdot\varepsilon((\rho\boxtimes\mu_{2\alpha})\otimes\phi^{\geq\alpha}_o(\rho)).
$$

Since $\phi^{<\alpha}_o(\rho)=\boxplus_{\{1\leq j\leq s'_{i_0}\colon \beta'_{i_{0},j}<\alpha\}}\varrho'_{i_0}\boxtimes\mu_{2\beta'_{i_0,j}+1}$,
we have, by Part \eqref{lm:eps-cong} of Lemma \ref{lm:eps-tensor}, that
\begin{align}
 & \varepsilon((\rho\boxtimes\mu_{2\alpha})\otimes\phi^{<\alpha}_o(\rho))\nonumber\\
=& \prod_{\{1\leq j\leq s'_{i_0}\colon \beta'_{i_{0},j}<\alpha\}}\det(\rho)(-1)^{b'_{i_0}\alpha(2\beta'_{i_0,j}+1)}
\det(\varrho'_{i_0})(-1)^{a\alpha(2\beta'_{i_0,j}+1)}(-1)^{2\beta'_{i_0,j}+1}
\nonumber\\
=&(-1)^{\#\phi^{<\alpha}_o(\rho)}\times \prod_{\{1\leq j\leq s'_{i_0}\colon \beta'_{i_{0},j}<\alpha\}}\det(\rho)(-1)^{b'_{i_0}\alpha}
\det(\varrho'_{i_0})(-1)^{a\alpha},
\label{eq:lm:even-<}
\end{align}
where $i_0$ is the index of $\varrho'_{i_0}$ with $\varrho'_{i_0}=\rho$ and
$\#\phi^{<\alpha}_o(\rho)$ is the number of irreducibles in $\phi^{<\alpha}_o(\rho)$, i.e., the cardinality of $\{1\leq j\leq s'_{i_0}\colon \beta'_{i_0,j}<\alpha_{i,j}\}$.

Since $\phi^{\geq\alpha}_o(\rho)=\boxplus_{\{1\leq j\leq s'_{i_0}\colon \beta'_{i_{0},j}\geq\alpha\}}\varrho'_{i_0}\boxtimes\mu_{2\beta'_{i_0,j}+1}$,
we have, by Part \eqref{lm:eps-cong} of Lemma \ref{lm:eps-tensor} again, that
\begin{align}
 & \varepsilon((\rho\boxtimes\mu_{2\alpha})\otimes\phi^{\geq\alpha}_o(\rho))\nonumber\\
=& \prod_{\{1\leq j\leq s'_{i_0}\colon \beta'_{i_{0},j}\geq\alpha\}}\det(\rho)(-1)^{b'_{i_0}\alpha(2\beta'_{i_0,j}+1)}
\det(\varrho'_{i_0})(-1)^{a\alpha(2\beta'_{i_0,j}+1)}(-1)^{2\alpha}
\nonumber\\
=&\prod_{\{1\leq j\leq s'_{i_0}\colon \beta'_{i_{0},j}\geq\alpha\}}\det(\rho)(-1)^{b'_{i_0}\alpha }
\det(\varrho'_{i_0})(-1)^{a\alpha}. \label{eq:lm:even->}
\end{align}

On the other hand, because $\phi^-_o(\rho)=\boxplus^{s'}_{\substack{i=1\\ i\ne i_0}}\boxplus^{s'_i}_{j=1}\varrho'_{i}\boxtimes\mu_{2\beta'_{i,j}+1}$, we have, by Part \eqref{lm:eps-ncong} of Lemma \ref{lm:eps-tensor}, that
\begin{equation}\label{eq:lm:even-c}
\varepsilon((\rho\boxtimes\mu_{2\alpha})\otimes\phi^-_o(\rho))= \prod^{s'}_{\substack{i=1\\ i\ne i_0}}\prod^{s'_i}_{j=1}\det(\rho)(-1)^{b'_i\alpha(2\beta'_{i,j}+1)}
\det(\varrho'_i)(-1)^{a\alpha(2\beta'_{i,j}+1)}.	
\end{equation}

Finally, by taking the product of \eqref{eq:lm:even-<}, \eqref{eq:lm:even->}, and \eqref{eq:lm:even-c}, we obtain that
\begin{align*}
&
\varepsilon((\rho\boxtimes\mu_{2\alpha})\otimes\phi_o(\rho))
\cdot \varepsilon((\rho\boxtimes\mu_{2\alpha})\otimes\phi^-_o(\rho))\\
=&\left(\prod^{s'}_{i=1}\prod^{s'_i}_{j=1}\det(\rho)(-1)^{b'_i\alpha}
\det(\varrho'_i)(-1)^{a\alpha}\right)\times (-1)^{\#\phi^{<\alpha}_o(\rho)}\\
=&\det(\rho)(-1)^{(\sum^{s'}_{i=1}b'_is'_i)\alpha}
\times
(\prod^{s'}_{i=1}\det(\varrho'_i)(-1)^{s'_i})^{a\alpha}
\times
(-1)^{\#\phi^{<\alpha}_o(\rho)}\\
=&\det(\rho)(-1)^{\alpha\dim\phi}
\times \det(\phi)(-1)^{a\alpha}\times
(-1)^{\#\phi^{<\alpha}_o(\rho)}.
\end{align*}
Indeed, since $\phi_e$ is of even dimension,
$\sum^{s'}_{i=1}b'_is'_i$ and $\dim\phi$ are of the same parity.

If $\varphi$ is of symplectic type, continuing with \eqref{eq:prop:even-B}, we obtain \eqref{eq:prop:even-summand}.
If $\varphi$ is of orthogonal type, then $\phi$ is  of symplectic type,
which implies  that $\det(\phi)=1$.
One also has  \eqref{eq:prop:even-summand}.

Next, we show \eqref{eq:lm:chi-star-o} by considering  summands of form $\varrho\boxtimes\mu_{2\beta+1}$ in $\varphi$.
Write $b=\dim\varrho$.

{\bf Type (2).}
Assume that  $\varrho$ is of even dimension (that is, $b$ is even). In this case, since $\varrho\boxtimes\mu_{2\beta+1}$ has even dimension,
$\apair{(e_i)\vert_{\varrho\boxtimes\mu_{2\beta+1}}}\cong \BZ_2$ is a subgroup of $\CS_\varphi$ regardless to the type of $\varphi$.
Similarly, denote by $1_{\varrho\boxtimes\mu_{2\beta+1}}$  the nontrivial element in $\apair{(e_i)\vert_{\varrho\boxtimes\mu_{2\beta+1}}}$.
By definition, $\chi^\star_{\varphi}(1_{\varrho\boxtimes\mu_{2\beta+1}})$ equals
\begin{equation}\label{eq:prop:odd-1}
\begin{cases}
	\det(\phi)(-1)^{\frac{b(2\beta+1)}{2}}
\ve((\varrho\boxtimes\mu_{2\beta+1})\otimes\phi) &\text{ if $\varphi$ is of symplectic type}\\
	\det(\varrho)(-1)^{m}\ve((\varrho\boxtimes\mu_{2\beta+1})\otimes\phi) &\text{ if $\varphi$ is of orthogonal type},
\end{cases}	
\end{equation}
where $m=\frac{\dim\phi}{2}$.
Under the assumption, we need to show that
\begin{equation}\label{eq:prop:odd-summand-1}
\chi^\star_{\varphi}(1_{\varrho\boxtimes\mu_{2\beta+1}})=
\prod^{s'}_{i=1}
\CE(\varrho,\varrho'_i)^{s'_i}
(-1)^{\#\phi^{>\beta}_e(\varrho)}.	
\end{equation}
Recall from \eqref{eq:CE} that
$$
\CE(\varrho,\varrho'_i)=\begin{cases}
	\det(\varrho'_i)(-1)^{\frac{b}{2}}\varepsilon(\varrho\otimes\varrho'_i), &\text{ if $\varphi$ is of symplectic type}\\
	\det(\varrho)(-1)^{\frac{b'_i}{2}}\varepsilon(\varrho\otimes\varrho'_i), &\text{ if $\varphi$ is of orthogonal type}.
\end{cases}
$$
We may write that $\phi=\phi_e\boxplus\phi_o=\phi_e(\varrho)\boxplus\phi^-_e(\varrho)\boxplus\phi_o$, following
from \eqref{eq:decomp-e}, \eqref{eq:decomp-o}, and \eqref{minus} again.
It follows that
\begin{align*}
 \ve((\varrho\boxtimes\mu_{2\beta+1})\otimes\phi)
=& \varepsilon((\varrho\boxtimes\mu_{2\beta+1})\otimes\phi_e)
\cdot
\varepsilon((\varrho\boxtimes\mu_{2\beta+1})\otimes\phi_o,\psi_{F});\\
\ve((\varrho\boxtimes\mu_{2\beta+1})\otimes\phi_e)
=& \varepsilon((\varrho\boxtimes\mu_{2\beta+1})\otimes\phi_e(\varrho))
\cdot \varepsilon((\varrho\boxtimes\mu_{2\beta+1})\otimes\phi^-_e(\varrho)).
\end{align*}
Here only the term $\varepsilon((\varrho\boxtimes\mu_{2\beta+1})\otimes\phi_o,\psi_{F})$ is possibly dependent of $\psi_{F}$.
By Part \eqref{lm:eps-symplectic-odd} of Lemma \ref{lm:eps-tensor}, when   $\varrho\otimes\varrho'_i$ is of symplectic type, we have that
$$
\varepsilon((\varrho\boxtimes\mu_{2\beta+1})\otimes (\varrho'_i\boxtimes\mu_{2\beta'_{i,j}+1}),\psi_{F})=\varepsilon(\varrho\otimes \varrho'_{i}),
$$
which is independent of $\psi_{F}$.
As $\phi_o=\boxplus^{s'}_{i=1}\boxplus^{s'_i}_{j=1}\varrho'_i\boxtimes\mu_{2\beta'_{i,j}+1}$, we have that
\begin{align*}
\varepsilon((\varrho\boxtimes\mu_{2\beta+1})\otimes\phi_o,\psi_{F})
=&\prod^{s'}_{i=1}\prod^{s'_i}_{j=1}
\varepsilon((\varrho\boxtimes\mu_{2\beta+1})\otimes (\varrho'_i\boxtimes\mu_{2\beta'_{i,j}+1}),\psi_{F})\\
=& \prod^{s'}_{i=1}
\varepsilon(\varrho\otimes \varrho'_{i})^{s'_i},
\end{align*}
which is independent of $\psi_{F}$.

Now, let us calculate the first two terms: $\varepsilon((\varrho\boxtimes\mu_{2\beta+1})\otimes\phi_e(\varrho))$ and
$\varepsilon((\varrho\boxtimes\mu_{2\beta+1})\otimes\phi^-_e(\varrho))$.

Since $\phi^-_e(\varrho)=\boxplus^{r'}_{\substack{i=1\\i\ne i_0}}\boxplus^{r'_i}_{j=1}\rho'_i\boxtimes\mu_{2\alpha'_{i,j}}$,
by Part \eqref{lm:eps-ncong} of Lemma \ref{lm:eps-tensor}, we have that
\begin{align}
&\varepsilon((\varrho\boxtimes\mu_{2\beta+1})\otimes\phi^-_e(\varrho))\nonumber\\
=&\prod^{r'}_{\substack{i=1\\i\ne i_0}}\prod^{r'_i}_{j=1}	\det(\varrho)(-1)^{a'_i\alpha'_{i,j}(2\beta+1)}
\det(\rho'_i)(-1)^{b\alpha'_{i,j}(2\beta+1)}, \label{eq:lm:odd-c}
\end{align}
where $i_0$ is the index of $\rho'_{i_0}$ with $\rho'_{i_0}=\varrho$.

Similar to  \eqref{eq:lm:even-<} and \eqref{eq:lm:even->}, we write that $\phi_e(\varrho)=\phi^{\leq \beta}_e(\varrho)\boxplus \phi^{> \beta}_e(\varrho)$. Since $\phi^{\leq \beta}_e(\varrho)=\boxplus_{\{1\leq j\leq r'_{i_0}\colon \alpha'_{i_0,j}\leq \beta\}}\rho'_{i_0}\boxtimes\mu_{2\alpha_{i'_0,j}}$, we have, by Part \eqref{lm:eps-cong} of Lemma \ref{lm:eps-tensor}, that
\begin{align}
 & \varepsilon((\varrho\boxtimes\mu_{2\beta+1})\otimes\phi^{\leq \beta}_e(\varrho))\nonumber\\
=& \prod_{\{1\leq j\leq r'_{i_0}\colon \alpha'_{i_0,j}\leq \beta\}}\det(\varrho)(-1)^{a'_{i_0}\alpha'_{i_0,j}(2\beta+1)}
\det(\rho'_{i_0})(-1)^{b\alpha'_{i_0,j}(2\beta+1)}\cdot (-1)^{2\alpha'_{i_0,j}}\nonumber\\
=&\prod_{\{1\leq j\leq r'_{i_0}\colon \alpha'_{i_0,j}\leq \beta\}}\det(\varrho)(-1)^{a'_{i_0}\alpha'_{i_0,j}}
\det(\rho'_{i_0})(-1)^{b\alpha'_{i_0,j}}. \label{eq:lm:odd-<}
\end{align}
Since $\phi^{> \beta}_e(\varrho)=\boxplus_{\{1\leq j\leq r'_{i_0}\colon \alpha'_{i_0,j}> \beta\}}\rho'_{i_0}\boxtimes\mu_{2\alpha'_{i_0,j}}$,
we have, by Part \eqref{lm:eps-cong} of Lemma \ref{lm:eps-tensor}, that
\begin{align}
 & \varepsilon((\varrho\boxtimes\mu_{2\beta+1})\otimes\phi^{> \beta}_e(\varrho))\nonumber\\
=& \prod_{\{1\leq j\leq r'_{i_0}\colon \alpha'_{i_0,j}> \beta\}}\det(\varrho)(-1)^{a'_{i_0}\alpha'_{i_0,j}(2\beta+1)}
\det(\rho'_{i_0})(-1)^{b\alpha'_{i_0,j}(2\beta+1)}\cdot (-1)^{2\beta+1}\nonumber\\
=&(-1)^{\#\phi^{>\beta}_e(\varrho)}\times \prod_{\{1\leq j\leq r'_{i_0}\colon \alpha'_{i_0,j}> \beta\}}\det(\varrho)(-1)^{a'_{i_0}\alpha'_{i_0,j}}
\det(\rho'_{i_0})(-1)^{b\alpha'_{i_0,j}}, \label{eq:lm:odd->}
\end{align}
where $\#\phi^{>\beta}_e(\varrho)$ is the number of irreducibles in $\phi^{>\beta}_e(\varrho)$.

Finally, by taking the product of \eqref{eq:lm:odd-c}, \eqref{eq:lm:odd-<}, and \eqref{eq:lm:odd->}, we obtain that
\begin{align*}
 &\varepsilon((\varrho\boxtimes\mu_{2\beta+1})\otimes\phi_e(\varrho))
\cdot \varepsilon((\varrho\boxtimes\mu_{2\beta+1})\otimes\phi^-_e(\varrho))\\
=&\left(\prod^{r'}_{i=1}\prod^{r'_i}_{j=1}
\det(\varrho)(-1)^{a'_i\alpha'_{i,j}}
\det(\rho'_i)(-1)^{b\alpha'_{i,j}}\right)
\times (-1)^{\#\phi^{>\beta}_e(\varrho)}\\
=&\det(\varrho)(-1)^{\sum^{r'}_{i=1}\sum^{r'_i}_{j=1} a'_i\alpha'_{i,j}}
\cdot (-1)^{\#\phi^{>\beta}_e(\varrho)}.
\end{align*}
Recall that $b$ is even and $\det\rho'_i(-1)^b=1$.

If $\varphi$ is of symplectic type,  then $\varrho$ is of symplectic type, which implies that $\det(\varrho)(-1)=1$.
Continuing with \eqref{eq:prop:odd-1} and by
$$
\det(\phi)(-1)=\det(\phi_o)(-1)=\prod^{s'}_{i=1}\det(\varrho_i)(-1)^{s'_i},
$$
one has \eqref{eq:prop:odd-summand-1}.

If $\varphi$ is of orthogonal type and $b$ is even, then $\varrho$ and $\rho'_i$ are of orthogonal type and $\varrho'_i$ is of symplectic type.
Because
$$
m\equiv\sum^{r'}_{i=1}\sum^{r'_i}_{j=1} a'_i\alpha'_{i,j}+\sum^{s'}_{i=1}s'_i\frac{b'_i}{2}\bmod{2},
$$
continuing with \eqref{eq:prop:odd-1}, one obtains \eqref{eq:prop:odd-summand-1}.

{\bf Type (3).}
Assume that $b=\dim\varrho$ is odd, which implies that $\varphi$ is of orthogonal type.
Let $\varrho_0\boxtimes \mu_{2\beta_0+1}$ be any different summand  in $\varphi$ such that $b_0:=\dim\varrho_0$ is odd.
Consider the following  subgroup of $\CS_\varphi$
$$\apair{(e_i)\vert_{\varrho\boxtimes\mu_{2\beta+1}\boxplus\varrho_0\boxtimes \mu_{2\beta_0+1}}\colon (e_i)\in\CS_\varphi }\cong \BZ_2.$$
Denote by $1_{\varrho\boxtimes\mu_{2\beta+1}\boxplus\varrho_0\boxtimes \mu_{2\beta_0+1}}$ the non-trivial element in the above subgroup.
If $\varrho_0\boxtimes \mu_{2\beta_0+1}$ does not exist, we do not need to consider this case as $\chi_\varphi^\star$ is a character of $\CS_\varphi$.
Then
we have
\begin{align*}
 & \chi^\star_{\varphi}(1_{\varrho\boxtimes\mu_{2\beta+1}\boxplus\varrho_0\boxtimes \mu_{2\beta_0+1}})\\
=& \det(\varrho)(-1)^{m}\det(\varrho_0)(-1)^{m}
\ve((\varrho\boxtimes\mu_{2\beta+1})\otimes\phi)
\ve((\varrho_0\boxtimes\mu_{2\beta_0+1})\otimes\phi).
\end{align*}
Similar to the above calculation, one obtains that
\begin{align*}
 &\chi^\star_{\varphi}(1_{\varrho\boxtimes\mu_{2\beta+1}\boxplus\varrho_0\boxtimes \mu_{2\beta_0+1}})\\
=& \det(\varrho)(-1)^{m}\cdot \det(\varrho_0)(-1)^{m}\\
&\times \prod^{r'}_{i=1}\prod^{r'_i}_{j=1}
(\det(\varrho)(-1)\det(\varrho_{0})(-1))^{a'_i\alpha'_{i,j}}
\prod^{r'}_{i=1}\prod^{r'_i}_{j=1}\det(\rho'_i)(-1)^{(b+b_0)\alpha'_{i,j}} \\
&\times\prod^{s'}_{i=1}
\varepsilon(\varrho\otimes \varrho'_{i})^{s'_i}
(-1)^{\#\phi^{>\beta}_e(\varrho)}
\prod^{s'}_{i=1}
\varepsilon(\varrho_0\otimes \varrho'_{i})^{s'_i}
(-1)^{\#\phi^{>\beta}_e(\varrho_0)}\\
=&\prod^{s'}_{i=1}
(\det(\varrho)(-1)^{\frac{b'_{i}}{2}}\varepsilon(\varrho\otimes \varrho'_{i}))^{s'_i}
(-1)^{\#\phi^{>\beta}_e(\varrho)}\\
&\times \prod^{s'}_{i=1}
(\det(\varrho_{0})(-1)^{\frac{b'_{i}}{2}}\varepsilon(\varrho_0\otimes \varrho'_{i}))^{s'_i}
(-1)^{\#\phi^{>\beta}_e(\varrho_0)}.
\end{align*}
Finally, by putting all the calculations above together, we obtain \eqref{eq:lm:chi-star-o}.
\end{proof}

At the end of this section, we present an example of Lemma \ref{lm:key-lem},
which also will be used in the proof of Theorem \ref{pp:dsqlp}.
Let $\varphi^\dagger$ be a discrete $L$-parameter of the decomposition defined in \eqref{eq:varphi-o-dagger}.
\begin{exmp}\label{ex:psi}
Let $\psi$ be a discrete $L$-parameter of the different type from $\varphi^\dagger$ and $\psi$ be given in Item \eqref{item:prop-1} of Theorem \ref{pp:dsqlp}.
Applying Lemma \ref{lm:key-lem} to both $\varphi^\dagger$ and $\psi$, one has
\begin{equation}\label{eq:chi-dagger}
\chi^\star_{\varphi^\dagger}((e_{i}))= \prod^s_{i=1}\left(
(\prod^{k}_{l=1}
\CE(\varrho_i,\delta_l))\cdot
(\prod^{r}_{j=1}\CE(\varrho_i,\rho_{j})^{m_{j}})
 \cdot (-1)^{n_{i}} \right)^{e_{i}}.	
\end{equation}
\end{exmp}
Remark that from this example the explicit description on $\FD_{\ell_0}(\varphi,\chi)$ is reduced to find $\psi$ satisfying Equation \eqref{eq:chi-dagger} of the minimal dimension.

\subsection{Proof of Theorem \ref{pp:dsqlp}} \label{sec:proof}
Following Corollary \ref{pp:gtq}, we have
$$
\FD_{\ell_0}(\varphi,\chi)=\FD_{\ell_0}(\varphi_\Box,\chi),
$$
and all local $L$-parameters $\phi$ in $\FD_{\ell_0}(\varphi_\Box,\chi)$ are discrete.
The proof is reduced to the case that $\varphi$ is discrete.
It is enough to show that Items  \eqref{item:prop-dim}, \eqref{item:prop-1},  \eqref{item:prop-2}, and \eqref{item:prop-3} are {\sl necessary} and
{\sl sufficient} to characterize the set $\FD_{\ell_0}(\varphi,\chi)$.
First, assume that $\chi^\star_\varphi=\chi$.
Applying Lemma \ref{lm:key-lem}, we conclude that Items  \eqref{item:prop-1} and \eqref{item:prop-2} are the necessary conditions for the descent parameters in $\FD_{\ell_0}(\varphi,\chi)$.
Note that Item \eqref{item:prop-dim} holds by the definition.
Then assume that $\phi$ is of the decomposition $\psi\boxplus\varphi_\sgn$, where $\psi$ is given in Items  \eqref{item:prop-1} and satisfies \eqref{item:prop-2}.
We show that for such $\phi$,
$\chi^\star_\varphi(\varphi,\phi)=\chi$ is equivalent to $\chi_{\lceil\varphi_o\rceil}=\chi^\star_{\varphi^\dagger}$, which is Item \eqref{item:prop-3}.
The calculation of $\chi^\star_{\varphi^\dagger}$ is given in Example \ref{ex:psi}.
Finally, by the minimality condition on $\dim\phi$, Items  \eqref{item:prop-dim}, \eqref{item:prop-1}, \eqref{item:prop-2}, and \eqref{item:prop-3} are sufficient.

{\bf Necessity.} Take $\phi$ in $\FD_{\ell_0}(\varphi,\chi)$.
Then $\phi$ is of the minimal even dimension such that $\chi^\star_\varphi=\chi$ for the pair $(\varphi,\phi)$.
By Corollary \ref{pp:gtq}, $\phi$ is discrete.

First, we consider the subrepresentation $\phi_o(\rho_i)$.
By \eqref{eq:lm:chi-star-e} in Lemma \ref{lm:key-lem}, the parity of $\#\phi^{<\alpha_{i,j}}_o(\rho_i)$ is determined by $\chi(1_{\rho_i\boxtimes\mu_{\alpha_i,j}})$.
By \eqref{eq:lm:chi-star-o}, the value $\chi^\star_\varphi((e_{i,j})\vert_{\varphi_o})$
is partially affected by
 $\varepsilon(\varrho\otimes \varrho'_{i'})^{s'_{i'}}$ with $\varrho'_{i'}\cong \rho_i$ where $s'_{i'}=\#\phi_o(\rho_i)$, and more precisely by the parity of $s'_{i'}$.
By the minimality of $\dim\phi$, one has that
$$
\phi^{<\alpha_{i,r_i}}_o(\rho_i)=\boxplus^{r}_{i=1}\boxplus_{j\in \sgn_{o,\rho_i}(\chi)}\rho_i\boxtimes\mu_{2\alpha_{i,j}+1},
$$
where $\sgn_{o,\rho_i}(\chi)$ is defined in \eqref{sgn-o}.

Next, we consider the component $\phi_e(\varrho_i)$.
For $1\leq j_1<j_2\leq s_i$, by Lemma \ref{lm:key-lem}, we have that
\begin{equation}\label{eq:prop:varrho}
\chi(1_{\varrho_i\boxtimes\mu_{2\beta_{i,j_1}+1}\boxplus\varrho_i\boxtimes \mu_{2\beta_{i,j_2}+1}})=(-1)^{\#\phi^{>\beta_{i,j_1}}_e(\varrho_i)+\#\phi^{>\beta_{i,j_2}}_e(\varrho_i)}.
\end{equation}
The parity of $\#\phi^{>\beta_{i,j_1}}_e(\varrho_i)\pm \#\phi^{>\beta_{i,j_2}}_e(\varrho_i)$ is uniquely determined by $\chi(1_{\varrho_i\boxtimes\mu_{2\beta_{i,j_1}+1}\boxplus\varrho_i\boxtimes \mu_{2\beta_{i,j_2}+1}})$.
In addition, $\chi^\star_{\varphi}((e_i)\vert_{\varphi^-(\varrho)})$ is independent of $\phi_e(\varrho_i)$. The only requirement on $\phi^{<\beta_{i,s_i}}_e(\varrho_i)$ is that \eqref{eq:prop:varrho} holds for all $1\leq j_1<j_2\leq s_i$.
Then by the minimality of $\dim\phi$ one has, for $1\leq j<r_i$, that
$$
\phi^{>\beta_{i,j}}_e(\varrho_i)\boxminus\phi^{>\beta_{i,j+1}}_e(\varrho_i)
=\begin{cases}
	\varrho_i\boxtimes\mu_{2(\beta_{i,j}+1)} &\text{ if } \chi(\cdot)=-1,\\
	0&\text{ if } \chi(\cdot)=1.
\end{cases}
$$
Here $\chi(\cdot)=\chi(1_{\varrho_i\boxtimes\mu_{2\beta_{i,j}+1}\boxplus\varrho_i\boxtimes \mu_{2\beta_{i,j+1}+1}})$ for simplicity.
Thus, we obtain that
$$
\phi^{<\beta_{i,s_i}}_e(\varrho_i)=
\boxplus^{s}_{i=1}\boxplus_{j\in \sgn_{e,\varrho_i}(\chi)}\varrho_i\boxtimes\mu_{2\beta_{i,j}+2}，
$$
where $\sgn_{e,\varrho_i}(\chi)$ is defined in \eqref{sgn-e}.

Now we rewrite $\phi=\psi\boxplus \varphi_\sgn$, where $\psi$ and $\varphi_\sgn$ (see \eqref{eq:phi-sgn} for definition)
have no common irreducibles.
By the above discussion, Example \ref{ex:psi} and Lemma \ref{lm:key-lem}, we may assume that $\psi^{\leq \alpha_{i,r_i}}_o(\rho_i)$ and $\psi^{\leq \beta_{i,s_i}}_e(\varrho_i)$ are zero for all $\rho_i$ and $\varrho_i$.
In the rest of the proof, we will repeatedly apply Lemma \ref{lm:key-lem} and the minimality of $\dim\phi$ to obtain the requirements on $\psi$.
First, no matter what $\psi_e(\rho)$ for $\rho\notin\{\varrho_1,\dots,\varrho_{s}\}$ are, $\chi^\star_\varphi$ does not change in \eqref{eq:lm:chi-star-e} and \eqref{eq:lm:chi-star-o}.
It follows that $\psi_e(\rho)=0$ for $\rho\notin\{\varrho_1,\dots,\varrho_{s}\}$.
Next, note that for all $\varrho_i$ only the parity of $\#\psi^{>\beta_{i,s_i}}_e(\varrho_i)$ has non-trivial contribution in \eqref{eq:lm:chi-star-e} and \eqref{eq:lm:chi-star-o}, which implies, for all $1\leq j\leq s$, that
\begin{equation}\label{eq:prop-pf-tau-1}
\psi_e(\varrho_i)=n_j\varrho_j\boxtimes\mu_{2\beta_{j,s_j}+2},	
\end{equation}
where $n_j\in\{0,1\}$.
Finally, let us consider $\psi_o(\rho)$ in two cases:  $\rho\notin\{\rho_1,\dots,\rho_r\}$ or $\rho\cong\rho_i$ for some $i$.
If $\rho\notin\{\rho_1,\dots,\rho_r\}$, only the parity of $\#\psi_o(\rho)$ is involved in \eqref{eq:lm:chi-star-o}.
When $\psi_o(\rho)\ne 0$,
one has that
\begin{equation}\label{eq:prop-pf-tau-2}
\psi_o(\rho)=\rho\boxtimes 1. 	
 \end{equation}
If $\rho\cong\rho_i$ for some $i$, only the parity of $\#\sgn_{o,\rho_i}(\chi)-\#\psi^{\geq\alpha_{i,r_i}}_o(\rho_i)$ possibly changes the value of $\chi^\star_\varphi$ in \eqref{eq:lm:chi-star-o}.
Thus, we obtain that
\begin{equation}\label{eq:prop-pf-tau-3}
\psi_o(\rho_i)=m_i\rho_i\boxtimes\mu_{2\alpha_{i,r_i}+1},	
\end{equation}
where $m_i\in\{0,1\}$.
Combining \eqref{eq:prop-pf-tau-1}, \eqref{eq:prop-pf-tau-2} and \eqref{eq:prop-pf-tau-3} all together, we obtain Items \eqref{item:prop-1} and \eqref{item:prop-2}, which are necessary conditions on $\psi$.

{\bf Sufficiency.}
Assume that $\phi=\psi\boxplus\varphi_\sgn$ and $\psi$ is of the form in Item \eqref{item:prop-1} satisfying Item \eqref{item:prop-2}.
By the above discussion and \eqref{eq:lm:chi-star-e}, $\chi\vert_{\varphi_e}=\chi^\star_\varphi\vert_{\varphi_e}$.
In order to complete the proof,
it is sufficient to show that $\chi\vert_{\varphi_o}=\chi^\star_\varphi\vert_{\varphi_o}$ if and only if $\chi_{\lceil\varphi_o\rceil}=\chi^\star_{\varphi^\dagger}$.
In the rest of the proof, all the characters are on the corresponding subgroups $\CS_{\varphi_o}$, $\CS_{\lceil\varphi_o\rceil}$ and $\CS_{\varphi^\dagger}$.

By applying \eqref{eq:lm:chi-star-o} in Lemma \ref{lm:key-lem} to $\varphi$ and $\varphi^\dagger$, and by \eqref{eq:chi-dagger} in Example \ref{ex:psi}, we have
\begin{equation}\label{eq:prop:chi-star-dagger}
\chi^\star_\varphi((e_{i,j})\vert_{\varphi_o})=\chi^\star_{\varphi^\dagger}((\sum^{s_i}_{j=1}e_{i,j}))\cdot
\prod^{s}_{i=1}\prod^{s_{i}}_{j=1}((-1)^{\#\varphi^{>\beta_{i,j}}_{\sgn,e}(\varrho_i)})^{e_{i,j}}.	
\end{equation}

Define $f\colon (e_{i,j})\vert_{\varphi_o}\in\CS_{\varphi_o}\mapsto (\sum^{s_i}_{j=1}e_{i,j})\in A_{\varphi^\dagger},$
where $\CS_{\varphi_o}$ is considered as a subgroup of $\CS_\varphi$.
It is easy to check that $f$ is surjective on $\CS_{\varphi^\dagger}$.
(In general it is not surjective on $A_{\varphi^\dagger}$.)
Denote $f\vert_{\lceil\varphi_o\rceil}$ to be the restriction map into $\CS_{\lceil\varphi_o\rceil}$.
Then $f\vert_{\lceil\varphi_o\rceil}$ is an isomorphism.
Following \eqref{eq:prop:chi-star-dagger} and by $\#\varphi^{>\beta_{i,s_i}}_{\sgn,e}(\varrho_i)=0$ for all $1\leq i\leq s$, one has that
\begin{equation}\label{eq:prop:chi-ceil=dagger}
\chi^\star_\varphi((e_{i,j})\vert_{\lceil\varphi_o\rceil})=\chi^\star_{\varphi^\dagger}((e_{i,s_i}))=\chi^\star_{\varphi^\dagger}(f((e_{i,j})\vert_{\lceil\varphi_o\rceil})).
\end{equation}
By \eqref{eq:prop:chi-ceil=dagger},
if $\chi\vert_{\varphi_o}=\chi^\star_\varphi\vert_{\varphi_o}$
then $\chi_{\lceil\varphi_o\rceil}=\chi^\star_{\varphi^\dagger}$ as $f\vert_{\lceil\varphi_o\rceil}$ is an isomorphism.

Suppose that $\chi_{\lceil\varphi_o\rceil}=\chi^\star_{\varphi^\dagger}$.
Let $\varrho_i\boxtimes \mu_{2\beta_{i,j}+1}$ be an irreducible subrepresentation of $\varphi_o$.
We consider two cases: $\dim\varrho_i$ is even and $\dim\varrho_i$ is odd, respectively,
as $\CS_{\varphi_o}$ is generated by the elements of two types $1_{\varrho_i\boxtimes \mu_{2\beta_{i,j}+1}}$, and $1_{\varrho_i\boxtimes \mu_{2\beta_{i,j}+1}\boxplus\varrho_{i'}\boxtimes \mu_{2\beta_{i',j'}+1}}$ with $i\ne i'$, or $i=i'$ and $j\ne j'$.

Assume that $\dim\varrho_i$ is even.
In this case, $1_{\varrho_i\boxtimes \mu_{2\beta_{i,j}+1}}$ is in $\CS_{\varphi_o}$.
Set
$$
j_0=\#\{j\leq l< s_i\colon l\in\sgn_{e,\varrho_i}(\chi)\}=\#\varphi^{>\beta_{i,j}}_{\sgn,e}(\varrho_i).
$$
By the definition of $\sgn_{e,\varrho_i}(\chi)$, one has that
$$
\chi(1_{\varrho_i\boxtimes \mu_{2\beta_{i,j}+1}})=(-1)^{j_0}\chi(1_{\varrho_i\boxtimes \mu_{2\beta_{i,s_i}+1}})=
(-1)^{j_0}\chi_{\lceil\varphi_o\rceil}(1_{\varrho_i\boxtimes \mu_{2\beta_{i,s_i}+1}}).
$$
By \eqref{eq:prop:chi-star-dagger}, we have that
$
\chi^\star_\varphi(1_{\varrho_i\boxtimes \mu_{2\beta_{i,j}+1}})=\chi^\star_{\varphi^\dagger}(1_{\varrho_i\boxtimes 1})(-1)^{\#\varphi^{>\beta_{i,j}}_{\sgn,e}(\varrho_i)}.
$
Thus, we obtain that $\chi(1_{\varrho_i\boxtimes \mu_{2\beta_{i,j}+1}})=\chi^\star_\varphi(1_{\varrho_i\boxtimes \mu_{2\beta_{i,j}+1}})$.

Assume that $\dim\varrho_i$ is odd.
Consider the elements in $\CS_{\varphi_o}$ of form $1_{\varrho_i\boxtimes \mu_{2\beta_{i,j}+1}\boxplus\varrho_{i'}\boxtimes \mu_{2\beta_{i',j'}+1}}$ with $i\ne i'$, or $i=i'$ and $j\ne j'$.
Suppose that $i=i'$ and $j<j'$.
Set
$$
j_0=\#\{j\leq l< j'\colon l\in\sgn_{e,\varrho_i}(\chi)\}=\#\varphi^{>\beta_{i,j}}_{\sgn,e}(\varrho_i)-\#\varphi^{>\beta_{i,j'}}_{\sgn,e}(\varrho_i).
$$
One has that
$$
\chi(1_{\varrho_i\boxtimes \mu_{2\beta_{i,j}+1}\boxplus\varrho_{i}\boxtimes \mu_{2\beta_{i,j'}+1}})=(-1)^{j_0}=
(-1)^{\#\varphi^{>\beta_{i,j}}_{\sgn,e}(\varrho_i)-\#\varphi^{>\beta_{i,j'}}_{\sgn,e}(\varrho_i)}.
$$
Since $f(1_{\varrho_i\boxtimes \mu_{2\beta_{i,j}+1}\boxplus\varrho_{i}\boxtimes \mu_{2\beta_{i,j'}+1}})=0\in \CS_{\varphi^\dagger}$, one has, by \eqref{eq:prop:chi-star-dagger}, that
$$
\chi^\star_\varphi(1_{\varrho_i\boxtimes \mu_{2\beta_{i,j}+1}\boxplus\varrho_{i}\boxtimes \mu_{2\beta_{i,j'}+1}})
=\chi^\star_{\varphi^\dagger}(0)
(-1)^{\#\varphi^{>\beta_{i,j}}_{\sgn,e}(\varrho_i)+\#\varphi^{>\beta_{i,j'}}_{\sgn,e}(\varrho_i)},
$$
which is equal to $\chi(1_{\varrho_i\boxtimes \mu_{2\beta_{i,j}+1}\boxplus\varrho_{i}\boxtimes \mu_{2\beta_{i,j'}+1}})$.

Suppose that $i\ne i'$.
Denote $1_{(i,j),(l,k)}=1_{\varrho_i\boxtimes \mu_{2\beta_{i,j}+1}\boxplus\varrho_{l}\boxtimes \mu_{2\beta_{l,k}+1}}$.
When $i=l$ and $j=k$, define that $1_{(i,j),(l,k)}=0$. Then $1_{(i,j),(l,k)}$ is in $\CS_{\varphi_o}$.
We rewrite that
$$
1_{\varrho_i\boxtimes \mu_{2\beta_{i,j}+1}\boxplus\varrho_{i'}\boxtimes \mu_{2\beta_{i',j'}+1}}=1_{(i,j),(i,s_i)}+1_{(i,s_i),(i',s_{i'})}+1_{(i',s_{i'}),(i',j')}.
$$
In the above case, we proved that
$\chi(1_{(i,j),(i,s_i)})=\chi^\star_\varphi(1_{(i,j),(i,s_i)})$
and
$\chi(1_{(i',s_{i'}),(i',j')})=\chi^\star_\varphi(1_{(i',s_{i'}),(i',j')})$.

It remains to show that $\chi(1_{(i,s_i),(i',s_{i'})})=\chi^\star_\varphi(1_{(i,s_i),(i',s_{i'})})$ with $i\ne i'$.
By definition, one has that $\chi(1_{(i,s_i),(i',s_{i'})})=\chi_{\lceil\varphi_o\rceil}(1_{(i,s_i),(i',s_{i'})})$.
Note that $\#\varphi^{>\beta_{i,s_i}}_{\sgn,e}(\varrho_i)=\#\varphi^{>\beta_{i',s_{i'}}}_{\sgn,e}(\varrho_{i'})=0$.
It then follows from \eqref{eq:prop:chi-star-dagger} that
$
\chi^\star_\varphi(1_{(i,s_i),(i',s_{i'})})=\chi^\star_{\varphi^\dagger}(1_{\varrho_i\boxtimes 1\boxplus\varrho_{i'}\boxtimes 1}),
$
which implies that $\chi(1_{(i,s_i),(i',s_{i'})})=\chi^\star_\varphi(1_{(i,s_i),(i',s_{i'})})$.
Therefore, we complete the proof of Theorem \ref{pp:dsqlp}.



\section{Proof of Theorems \ref{th:ldds} and \ref{th:sdlp}}\label{sec-MTHM}


In this section, we will apply our main results in Theorem \ref{pp:dsqlp} to prove Theorems \ref{th:ldds} and \ref{th:sdlp}.
First, we use the descents of discrete parameters studied in Section \ref{sec-DLP}
to recover the local descents of representations via the local Langlands correspondence.

\begin{prop}[Part \eqref{thm:sdlp:part-1} of Theorem \ref{th:sdlp}]\label{cor:genric-disc}
For a $\pi\in\Pi(G_n)$ with a generic $L$-parameter $\varphi$,
the first occurrence index of $\pi$ can be calculated by
$$
\ell_0(\pi)=\max_{\chi\in\CO_\CZ(\pi)}\{\ell_0(\varphi,\chi)\}=n-\frac{\dim(\varphi_\Box)}{2}+\max_{\chi\in\CO_\CZ(\pi)}\{\ell_0(\varphi_\Box,\chi)\},
$$
where $\CO_\CZ(\pi)$ is defined in \eqref{eq:CZ} and $\varphi_\Box$ is defined in \eqref{eq:discrete}.
Moreover, suppose that
$\chi_0$ is such a character that
$$
\ell_0(\varphi_\Box,\chi_0)=\max_{\chi\in\CO_\CZ(\pi)}\{\ell_0(\varphi_\Box,\chi)\}
$$
holds, and that $\pi=\pi_a(\varphi,\chi_0)$ under the local Langlands correspondence $\iota_a$.
Define that $\pi_\Box:=\pi_a(\varphi_\Box,\chi_0)\in\Pi(G_{n_0})$ where $n_0=\frac{\dim\varphi_\Box}{2}$.
If $\disc(\CO_{\ell_0(\pi)})=\disc(\CO_{\ell_0(\pi_\Box)})$,
then an irreducible square integrable quotient occurs in $\CD_{\CO_{\ell_0(\pi)}}(\pi)$ if and only if it occurs in
$\CD_{\CO_{\ell_0(\pi_\Box)}}(\pi_\Box)$.
\end{prop}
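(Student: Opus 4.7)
The plan is to combine the local Gan-Gross-Prasad theorem (Theorem \ref{conj:smo}) with Theorem \ref{thm:W12b} of M\oe glin-Waldspurger and the explicit descent of $L$-parameters in Theorem \ref{pp:dsqlp}. I would establish the two displayed equalities separately and then deduce the statement about square-integrable quotients from the resulting parametrization.

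\textbf{First equality.} By Proposition \ref{pp:iq} and Corollary \ref{pp:gtq}, whenever $\CJ_{\CO_\ell}(\pi)\ne 0$ it admits an irreducible square-integrable quotient $\sigma$, so that $m(\pi,\sigma)\ne 0$. Theorem \ref{thm:W12b} then realizes the pair $(\pi,\sigma)$ via the characters $(\chi^\star_\varphi(\varphi,\phi),\chi^\star_\phi(\varphi,\phi))$ under the local Langlands correspondence $\iota_a$, where $a=-\disc(W)\disc(V)$ is determined by $\disc(\CO_\ell)$ together with $\disc(V)$. As $\CO_\ell$ ranges over the $F$-rational orbits of $\udl{p}_\ell$, the discriminant $\disc(\CO_\ell)$ varies through $\CZ$ and the corresponding $\chi_a(\pi)$ traces out the orbit $\CO_\CZ(\pi)$ of \eqref{eq:CZ}. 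Conversely, any $\chi\in\CO_\CZ(\pi)$ together with a choice of $\phi\in\FD_\ell(\varphi,\chi)$ produces a nonzero local descent at the compatible $\CO_\ell$ via the same theorem. Thus $\ell_0(\pi)=\max_{\chi\in\CO_\CZ(\pi)}\ell_0(\varphi,\chi)$.

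\textbf{Second equality.} Theorem \ref{pp:dsqlp} gives $\FD_{\ell_0(\varphi,\chi)}(\varphi,\chi)=\cup_\psi[\psi\boxplus\varphi_\sgn]_c$, where $\psi$ has minimal dimension subject to conditions \ref{item:prop-dim}--\ref{item:prop-3}. A direct inspection shows that $\varphi_\sgn$ and each admissible $\psi$ depend only on $\chi$ and on $\varphi_\Box$ (through $\varphi^\dagger$ and $\lceil\varphi_o\rceil$), so the minimal dimension $\dim\psi_{\min}+\dim\varphi_\sgn$ of an element of $\FD_\ell(\varphi,\chi)$ is identical for $\varphi$ and for $\varphi_\Box$. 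Therefore
\[
\ell_0(\varphi,\chi)-\ell_0(\varphi_\Box,\chi)=\tfrac{1}{2}(\dim\varphi-\dim\varphi_\Box)=n-\tfrac{\dim\varphi_\Box}{2},
\]
independently of $\chi$. Since $\CS_\varphi=\CS_{\varphi_\Box}$ and $\det\varphi=\det\varphi_\Box$, one has $\CO_\CZ(\pi)=\CO_\CZ(\pi_\Box)$; taking the maximum over this common set yields the second equality.

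\textbf{Square-integrable quotients and main obstacle.} Under the hypothesis $\disc(\CO_{\ell_0(\pi)})=\disc(\CO_{\ell_0(\pi_\Box)})$, the same $a\in\CZ$ governs both local Langlands correspondences, and $\pi=\pi_a(\varphi,\chi_0)$ and $\pi_\Box=\pi_a(\varphi_\Box,\chi_0)$ share the first-occurrence character $\chi_0$. The sets $\FD_{\ell_0(\pi)}(\varphi,\chi_0)$ and $\FD_{\ell_0(\pi_\Box)}(\varphi_\Box,\chi_0)$ then coincide, and for any $\phi$ in this common set the identity $\chi^\star_\phi(\varphi,\phi)=\chi^\star_\phi(\varphi_\Box,\phi)$ implies that $\sigma:=\pi_a(\phi,\chi^\star_\phi)$ occurs as a square-integrable quotient of $\CD_{\CO_{\ell_0(\pi)}}(\pi)$ if and only if it occurs in $\CD_{\CO_{\ell_0(\pi_\Box)}}(\pi_\Box)$, via the local GGP pairing of Theorem \ref{thm:W12b}. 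The technical heart is precisely this last identity: via \eqref{eq:CE} applied to each good-parity summand $\phi_i$ of $\phi$, it reduces to showing that the symplectic root number contribution from $\varphi\boxminus\varphi_\Box$ is trivial. The non-self-dual pairs $\phi_k\otimes|\cdot|^{\beta_k}\oplus\phi_k^\vee\otimes|\cdot|^{-\beta_k}$ collapse to $\det(\phi_k\otimes\phi_i)(-1)$ via the Deligne functional equation, while the even-multiplicity bad-parity summands $2m'_j\phi_j$ contribute $\det(\phi_j\otimes\phi_i)(-1)^{m'_j}$; both signs must be absorbed using $\det\varphi=\det\varphi_\Box$ together with the fact that $\chi^\star_\phi$ is only evaluated on $\CS_\phi$, which detects only good-parity components of $\phi$.
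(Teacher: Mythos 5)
Your overall strategy matches the paper's: reduce to Theorem~\ref{thm:W12b} and Proposition~\ref{pro:MW12}, deduce the first equality by bounding $\ell_0(\pi)$ above via $m(\pi,\sigma)\neq 0$ and below by producing a nonzero multiplicity from any $[\phi]_c\in\FD_{\ell_0^{\max}}(\varphi,\chi_0)$, and pass between $\varphi$ and $\varphi_\Box$ via the equality of distinguished characters $\chi^\star(\varphi,\phi)=\chi^\star(\varphi_\Box,\phi)$. Two places where your route diverges are worth noting. For the second displayed equality, the paper does not invoke Theorem~\ref{pp:dsqlp}; it directly observes that $\chi^\star(\varphi,\phi)=\chi^\star(\varphi_\Box,\phi)$ forces $\FD_\ell(\varphi,\chi)$ and $\FD_{\ell'}(\varphi_\Box,\chi)$ to consist of the same $L$-parameters (with $\ell-\ell'=n-n_0$), which gives the index shift immediately. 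Your detour through the explicit minimal-dimension form of the descent set is valid but carries more machinery than is needed; the paper's observation is already the input to the proof of Theorem~\ref{pp:dsqlp}, so you are in effect re-deriving it downstream.

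The more substantive issue is your ``technical heart'' paragraph. The paper treats $\chi^\star(\varphi,\phi)=\chi^\star(\varphi_\Box,\phi)$ as a direct consequence of multiplicativity of $\CE$ in its first argument; you instead try to verify it by a root-number computation, but the sketch as written does not close. The per-summand contributions you identify are correct: the non-self-dual pairs give $\det(\phi_k\otimes\phi_i)(-1)^{m_k}$ by the Deligne functional equation, and the bad-parity blocks give $\det(\phi_j\otimes\phi_i)(-1)^{m'_j}$ from $\varepsilon^2=\det(-1)$. However, $\det\varphi=\det\varphi_\Box$ (which holds trivially since $\det(\varphi\boxminus\varphi_\Box)=1$) plays no role in cancelling these: the offending sign $\det(\phi_k\otimes\phi_i)(-1)=\det\phi_k(-1)^{\dim\phi_i}\det\phi_i(-1)^{\dim\phi_k}$ involves $\det\phi_k$ and $\det\phi_i$, not $\det\varphi$. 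When you multiply over $i\in\RI_\gp^{\phi}$ with exponents $(e_i)\in\CS_\phi$, the factor $\det\phi_k(-1)^{m_k\sum_i e_i\dim\phi_i}$ does vanish thanks to the constraint $\sum e_i\dim\phi_i\equiv 0\ (\mathrm{mod}\ 2)$ defining $\CS_\phi$, but the factor $\bigl(\prod_i\det\phi_i(-1)^{e_i}\bigr)^{m_k\dim\phi_k}$ survives whenever $m_k\dim\phi_k$ is odd and $\phi$ is orthogonal. So the claim that the signs are ``absorbed'' using $\det\varphi=\det\varphi_\Box$ is not correct as stated; the actual verification requires accounting carefully for the normalization factor $\det(\cdot)(-1)^{\dim/2}$ built into $\CE$ in~\eqref{eq:CE}. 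Better to cite this as a formal property of the GGP distinguished characters (Sections~6 and~18 of~\cite{GGP}), as the paper does, or to give the computation in full. You should also note, as the paper does, that the degenerate case $\ell_0(\varphi,\chi_b(\pi))<\ell_0(\pi)$ (so that $\CD_{\CO_{\ell_0(\pi)}}(\pi)=0$ for the specific chosen orbit) must be handled separately, although there the equivalence is vacuous since both descents vanish.
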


\begin{proof}
For each $\chi\in\CS_\varphi\cong \CS_{\varphi_\Box}$ and a generic $L$-parameter $\phi$ of the type different from that of $\varphi$,
since $\chi^\star(\varphi,\phi)=\chi^\star(\varphi_\Box,\phi)$,
we have $\FD_{\ell}(\varphi,\chi)=\FD_{\ell}(\varphi_\Box,\chi)$.
Hence $\ell_0(\varphi,\chi)=n-\frac{\dim(\varphi_\Box)}{2}+\ell_0(\varphi_\Box,\chi)$.
It suffices to show that $\ell_0(\pi)=\max_{\chi\in\CO_\CZ(\pi)}\{\ell_0(\varphi,\chi)\}$.

By Corollary \ref{pp:gtq}, $\CD_{\CO_{\ell_0}}(\pi)$ is nonzero for some rational orbit $\CO_{\ell_0}$ if and only if  there exists a nonzero irreducible square-integrable representation $\sig$ of $G^{\CO_{\ell_0}}_n$ such that $m(\pi,\sig)\ne 0$.
If $m(\pi,\sig)\ne 0$,
assume that $\pi=\pi_a(\varphi,\chi_\pi)$ and $\sig=\pi_a(\phi,\chi_\sig)$.
By Theorem \ref{thm:W12b}, $(\chi_\pi,\chi_\sig)=\chi^\star(\varphi,\phi)$ and
then $[\phi]_c$ is in $\FD_{\ell_0(\pi)}(\varphi,\chi_\pi)$.
It follows that
$$
\ell_0(\pi)\leq \ell_0(\varphi,\chi_\pi)\leq \max_{\chi\in\CO_\CZ(\pi)}\{\ell_0(\varphi,\chi)\}.
$$
On the other hand, let $\chi_0$ be in $\CO_{\CZ}(\pi)$ such that
$$
\ell_0(\varphi,\chi_0)=\max_{\chi\in\CO_\CZ(\pi)}\{\ell_0(\varphi,\chi)\}
=n-\frac{\dim(\varphi_\Box)}{2}+\max_{\chi\in\CO_\CZ(\pi)}\{\ell_0(\varphi_\Box,\chi)\}.
$$
Take  an equivalence class $[\phi]_c$ of the local $L$-parameters in $\FD_{\ell^{\max}_0}(\varphi,\chi_0)$ at the first occurrence index $\ell^{\max}_0=\ell_0(\varphi,\chi_0)$.
Choose the local Langlands correspondence $\iota_a$ such that $\pi=\pi_a(\varphi,\chi_0)$.
By Definition \ref{df:foi-L}, $\chi_0=\chi^\star_{\varphi}(\varphi,\phi)$.
Denote $\chi_\phi=\chi^\star_{\phi}(\varphi,\phi)$ and $\sig=\pi_a(\phi,\chi_\phi)$ under the same Langlands correspondence $\iota_a$.
By Theorem \ref{thm:W12b}, we have $m(\pi,\sig)=1$ and $\sig$ is an irreducible quotient in $\CD_{\CO_{\ell^{\max}_0}}(\pi)$.
Then $\CD_{\CO_{\ell^{\max}_0}}(\pi)\ne 0$ and $\ell_0(\pi)\geq \ell^{\max}_0$.
Hence, we have that $\ell_0(\pi)=\ell^{\max}_0$.

Consider the local descents given by
the rational orbits $\CO_{\ell_0(\pi)}$ and $\CO_{\ell_0(\pi_\Box)}$ with $\disc(\CO_{\ell_0(\pi)})=\disc(\CO_{\ell_0(\pi_\Box)})$.
Referring to Theorem \ref{thm:W12b} and Remark \ref{rmk:O-V-W}, the local Langlands correspondence $\iota_b$ for $\pi$ is determined by $\disc(\CO_{\ell_0(\pi)})$,
so is the same choice for $\pi_\Box$.
Recall that the corresponding character of $\CS_\varphi\cong \CS_{\varphi_\Box}$ is denoted by $\chi_b:=\chi_b(\pi)=\chi_b(\pi_\Box)$.

By the definition in \eqref{eq:CZ}, $\CO_\CZ(\pi)=\CO_\CZ(\pi_\Box)$ and then
$\ell_0(\pi_\Box)=\max_{\chi\in\CO_\CZ(\pi)}\{\ell_0(\varphi_\Box,\chi)\}$.
By the above proof, $\ell_0(\pi)=n-\frac{\dim\varphi_\Box}{2}+\ell_0(\pi_\Box)$.
If $\disc(\CO_{\ell_0(\pi)})=\disc(\CO_{\ell_0(\pi_\Box)})$, then $G^{\CO_{\ell_0(\pi)}}_n\cong G^{\CO_{\ell_0(\pi_\Box)}}_{n_0}$.

If $\ell_0(\varphi,\chi_b(\pi))<\ell_0(\pi)$ (equivalently, $\ell_0(\varphi_\Box,\chi_b(\pi_\Box))<\ell_0(\pi_\Box)$), then both $\FD_{\ell_0(\pi)}(\varphi,\chi_b)$ and $\FD_{\ell_0(\pi_\Box)}(\varphi_\Box,\chi_b)$ are empty.
By Theorem \ref{thm:W12b}, $\CD_{\ell_0(\pi)}(\pi)=\CD_{\ell_0(\pi_\Box)}(\pi_\Box)=0$.

Assume that $\ell_0(\varphi,\chi_a(\pi))=\ell_0(\pi)$ for some $\iota_a$ as denoted in the proposition.
With the above notation, an irreducible square-integrable representation $\sig$ of $G^{\CO_{\ell_0}}_n$ occurs in $\CD_{\CO_{\ell_0(\pi)}}(\pi)$ if and only if $(\chi_a(\pi),\chi_\sig)=\chi^\star(\varphi,\phi)$ by Theorem \ref{thm:W12b}.
Recall that by the definition of $\pi_\Box$, $\chi_a(\pi)=\chi_a(\pi_\Box)$ under the same local Langlands correspondence.
By $\chi^\star(\varphi,\phi)=\chi^\star(\varphi_\Box,\phi)$,
we have $\chi^\star(\varphi_\Box,\phi)=(\chi_a(\pi_\Box),\chi_\sig)$,
which is equivalent that $\sig$ is also an irreducible quotient of $\CD_{\CO_{\ell_0(\pi)}}(\pi_\Box)$.
\end{proof}

In order to prove Theorem \ref{th:ldds},
it remains to show that the irreducible quotients of the local descent $\CD_{\CO_{\ell_0}}(\pi)$ (at the first occurrence index $\ell_0=\ell_0(\pi)$ of $\pi$)
belong to different Bernstein components of $G_n^{\CO_{\ell_0}}(F)$.

\begin{thm}[Theorem \ref{th:ldds}]\label{pp:dsds}
For any $\pi\in\Pi(G_n)$ with a generic $L$-parameter,
 irreducible quotients of the local descent $\CD_{\CO_{\ell_0}}(\pi)$ at the first occurrence index $\ell_0=\ell_0(\pi)$ of $\pi$ belong to different Bernstein components.
Moreover, $\CD_{\CO_{\ell_0}}(\pi)$ can be written as a
multiplicity-free direct sum
of irreducible square-integrable representations of $G_n^{\CO_{\ell_0}}(F)$, and hence is square-integrable and admissible.
\end{thm}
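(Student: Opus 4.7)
The plan is to combine the explicit classification of descent $L$-parameters from Theorem \ref{pp:dsqlp} with the uniqueness of local Bessel functionals to exhibit $\CD_{\CO_{\ell_0}}(\pi)$ as a finite, multiplicity-free direct sum. First I would identify the irreducible quotients. By Corollary \ref{pp:gtq}, every irreducible quotient of $\CJ_{\CO_{\ell_0}}(\pi)$ (hence of $\CD_{\CO_{\ell_0}}(\pi)$) is square-integrable, so its $L$-parameter $\phi$ is discrete. Applying the local GGP conjecture in the form of Theorem \ref{thm:W12b}, with $a=\disc(\CO_{\ell_0})$, an irreducible square-integrable $\sigma$ on $G_n^{\CO_{\ell_0}}(F)$ satisfies $m(\pi,\sigma)\neq 0$ precisely when $\phi\in\FD_{\ell_0}(\varphi,\chi_a(\pi))$ and $\sigma=\sigma_\phi:=\pi_a(\phi,\chi^\star_\phi(\varphi,\phi))$; moreover, Theorem \ref{pp:dsqlp} ensures that this set of $\phi$'s is finite.

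Next, using uniqueness of Bessel functionals (dimension at most one), I would assemble the canonical Bessel functionals into a single map
$$
\Phi : \CJ_{\CO_{\ell_0}}(\pi) \longrightarrow \bigoplus_{\phi\in\FD_{\ell_0}(\varphi,\chi_a(\pi))} \sigma_\phi.
$$
The kernel of $\Phi$ equals $\CS_{\CO_{\ell_0}}(\pi)$: any $\sigma$ outside $\{\sigma_\phi\}$ carries only the zero Bessel functional and so contributes trivially to the defining intersection of $\CS_{\CO_{\ell_0}}(\pi)$. Hence $\Phi$ descends to an embedding of $\CD_{\CO_{\ell_0}}(\pi)$ into the semisimple target, forcing $\CD_{\CO_{\ell_0}}(\pi)$ itself to be semisimple. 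Since each $\sigma_\phi$ is simultaneously an irreducible quotient of $\CD_{\CO_{\ell_0}}(\pi)$ (via its nonzero Bessel functional, which factors through $\CD_{\CO_{\ell_0}}(\pi)$), in the semisimple decomposition each $\sigma_\phi$ must appear as a summand with multiplicity exactly one. This yields $\CD_{\CO_{\ell_0}}(\pi)=\bigoplus_\phi \sigma_\phi$ as a multiplicity-free direct sum of irreducible square-integrable representations.

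It remains to show that distinct classes $[\phi]_c\in\FD_{\ell_0}(\varphi,\chi_a(\pi))$ yield summands lying in distinct Bernstein components. For a square-integrable representation of a classical group, the inertial class of its cuspidal support is determined by the Jordan block structure of its discrete $L$-parameter through the Zelevinsky-type segment description. Theorem \ref{pp:dsqlp} guarantees that distinct choices of $\psi$ produce discrete parameters $\psi\boxplus\varphi_{\sgn}$ with genuinely different multisets of irreducible self-dual summands, and $c$-conjugation preserves the underlying multiset of summands. Consequently the cuspidal supports are inertially inequivalent and the Bernstein components are distinct; admissibility and square-integrability of $\CD_{\CO_{\ell_0}}(\pi)$ then follow immediately from its expression as a finite direct sum of irreducible admissible square-integrable constituents.

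The main obstacle I anticipate is this final Bernstein-component step: one must confirm that distinct classes $[\phi]_c$ really do produce inertially inequivalent cuspidal supports, handling in particular the $c$-conjugation subtlety for even orthogonal groups so that the identification $\sigma\sim\sigma\circ\Ad(c)$ built into the weak local Langlands correspondence of Section \ref{ssec-rllc} does not inadvertently collapse different summands into a common Bernstein component.
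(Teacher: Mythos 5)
Your approach---identifying the irreducible quotients via the local GGP theorem and then embedding $\CD_{\CO_{\ell_0}}(\pi)$ into the direct sum of those quotients via the canonical Bessel functionals---is a genuinely different route from the paper's. The paper instead invokes the Bernstein decomposition of $\CD_{\CO_{\ell_0}}(\pi)$ directly, shows that the irreducible quotients of $\CD_{\CO_{\ell_0}}(\pi)$ have pairwise distinct cuspidal supports, and then exploits the definition of $\CD_{\CO_{\ell_0}}(\pi)$ as the maximal Bessel quotient of $\CJ_{\CO_{\ell_0}}(\pi)$ to force each Bernstein component to coincide with its unique irreducible quotient. Your embedding route is appealing because it gets the multiplicity-free semisimple decomposition straight out of GGP uniqueness, but it postpones rather than avoids the Bernstein-component claim, which is an explicit part of the statement. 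There are, moreover, two genuine gaps.

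First, the assertion that $\FD_{\ell_0}(\varphi,\chi_a(\pi))$ is finite is false. In Theorem \ref{pp:dsqlp} the summand $\boxplus^k_{i=1}\delta_i\boxtimes 1$ of $\psi$ ranges over irreducible self-dual representations $\delta_i$ of $\CW_F$ of the appropriate type, disjoint from $\{\rho_i\}$, subject only to a bound on the total dimension; there are infinitely many such $\delta_i$ of any dimension greater than one. The paper's Remark immediately after Theorem \ref{pp:dsds} states exactly this: the local descent at the first occurrence may be a direct sum of infinitely many irreducible square-integrable representations. Your map $\Phi$ therefore lands in a possibly infinite direct sum; this is not fatal in itself (a $K$-fixed vector is sent nontrivially only into the finitely many $\sigma_\phi$ with $\sigma_\phi^K\ne 0$, so $\Phi$ still has image in $\bigoplus_\phi\sigma_\phi$), but you do not make that argument, and the closing claim that admissibility ``follows immediately from its expression as a finite direct sum'' collapses. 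The paper proves admissibility by a separate, finiteness-agnostic argument: for each compact open $K$ only finitely many square-integrable representations of $G_n^{\CO_{\ell_0}}(F)$ admit nonzero $K$-fixed vectors, so $\CD_{\CO_{\ell_0}}(\pi)^K$ is finite-dimensional.

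Second, the Bernstein-component separation is only flagged as an ``anticipated obstacle'' and never proved. The paper resolves it by passing to the infinitesimal character $\lam_\phi$ of $\phi=\psi\boxplus\varphi_\sgn$ and invoking the result of Aubert--Moussaoui--Solleveld \cite{AMS15} that distinct infinitesimal characters land in distinct Bernstein components; since $\varphi_\sgn$ is fixed by $(\varphi,\chi)$ and the $\psi$'s allowed by Theorem \ref{pp:dsqlp} have pairwise distinct $\lam_\psi$, the $\lam_\phi$'s differ. An informal appeal to ``different multisets of irreducible self-dual summands'' is not a substitute for this precise input, and the $c$-conjugation concern you raise is exactly the kind of subtlety that a concrete criterion is needed to control.
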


\begin{rmk}
In general, the local descent $\CD_{\CO_{\ell_0}}(\pi)$ at the first occurrence index could be a direct sum of infinitely many irreducible non-supercuspidal square-integrable representations.
When $\ell<\ell_0$, the descent $\CD_{\CO_{\ell}}(\pi)$ may not be completely reducible.
\end{rmk}

\begin{proof}
Assume that $\pi=\pi_a(\varphi,\chi)$ under local Langlands correspondence $\iota_a$ for both even and odd special orthogonal groups discussed in Section \ref{ssec-rllc}.
The local descent
$\CD_{\CO_{\ell_0}}(\pi)$ is a smooth representation of $G_n^{\CO_{\ell_0}}(F)$.
By the Bernstein decomposition, $\CD_{\CO_{\ell_0}}(\pi)$ is a direct sum of its Bernstein components.
Thus, it is sufficient to show that if $\sig_1$ and $\sig_2$ are non-isomorphic irreducible quotients of $\CD_{\CO_{\ell_0}}(\pi)$, i.e.
$$
m(\pi_a(\varphi,\chi),\sig_1)=m(\pi_a(\varphi,\chi),\sig_2)=1,
$$
then $\sig_1$ and $\sig_2$ have different cuspidal supports.
Corollary \ref{pp:gtq} asserts that $\sig_1$ and $\sig_2$ are square-integrable.

By Proposition \ref{cor:genric-disc}, we may assume, without loss of generality, that $\varphi$ is discrete.
Then $\sig_1=\pi_a(\phi_1,\xi_1)$ and $\sig_2=\pi_a(\phi_2,\xi_2)$, where $\phi_1$ and $\phi_2$ are of the form in Theorem \ref{pp:dsqlp}.
We have decompositions $\phi_i=\psi_i\boxplus\varphi_{\sgn}$ for $i=1,2$.
Let $\lam_{\phi_i}$ be the infinitesimal characters of $\phi_i$ (as explained in \cite[Page~69]{A13}, for instance).
That is,
$$
\lam_{\phi_i}(w)=\phi_i\ppair{w,
\begin{pmatrix}
|w|^{\frac{1}{2}}& \\ &|w|^{-\frac{1}{2}}
 \end{pmatrix} }.
$$
Referring to \cite{AMS15}, if $\lam_{\phi_1}\ne \lam_{\phi_2}$, then $\Pi_{\phi_1}(G_n^{\CO_{\ell_0}})$ and $\Pi_{\phi_2}(G_n^{\CO_{\ell_0}})$ belong to different Bernstein components.
Because $\phi_1\ne \phi_2$, we have that $\psi_1\ne \psi_2$, which implies that
$\lam_{\psi_1}\ne \lam_{\psi_2}$ by the definition of $\psi_i$.
Then $\lam_{\phi_1}\ne \lam_{\phi_2}$.
Hence each Bernstein component of the local descent $\CD_{\CO_{\ell_0}}(\pi)$ has a unique irreducible quotient, which is square-integrable
by Corollary \ref{pp:gtq}. By the definition of the $\ell$-th local descent $\CD_{\CO_\ell}(\pi)$, which is the $\ell$-th maximal quotient of
the $\ell$-th twisted Jacquet module $\CJ_{\CO_\ell}(\pi)$, it follows that each Bernstein component of the local descent $\CD_{\CO_{\ell_0}}(\pi)$
is an irreducible square-integrable representation of $G_n^{\CO_{\ell_0}}(F)$. Therefore, the local descent $\CD_{\CO_{\ell_0}}(\pi)$ is
a multiplicity-free direct sum of irreducible square-integrable representations of $G_n^{\CO_{\ell_0}}(F)$, and hence is itself square-integrable.
Moreover, for any compact open subgroup $K$ of $G_n^{\CO_{\ell_0}}(F)$,
there are only finitely many square-integrable representations of $G_n^{\CO_{\ell_0}}(F)$ with $K$-fixed vectors.
Thus, the $K$-invariant subspace of $\CD_{\CO_{\ell_0}}(\pi)$ is finite and hence the local descent $\CD_{\CO_{\ell_0}}(\pi)$ is admissible.
\end{proof}

Now, let us finish the proof of Part \eqref{thm:sdlp:part-2} of Theorem \ref{th:sdlp}.
Following Theorem \ref{pp:dsds}, for each $F$-rational orbit $\CO_{\ell_0}$,
$\CD_{\ell_0}(\pi)$ is  either zero or a direct sum of square-integrable representations.
Recall that an irreducible square-integrable representation $\sig$ is a subrepresentation of $\CD_{\ell_0}(\pi)$ if and only if the data associated to $\sig$ belong to $\FD_{\ell_0}(\varphi,\chi)$ for some $\chi\in\CO_\CZ(\pi)$ by Theorem \ref{thm:W12b}.
To prove this theorem,   we will characterize $\sig$ by the descents of $L$-parameters in Theorem \ref{pp:dsqlp}.

Fix a choice of $F$-rational orbit $\CO_{\ell_0}$.
Then the quadratic space $W$ defining $G^{\CO_{\ell_0}}_n$ is determined by
$\disc(W)=(-1)^{\Fn-1}\disc(\CO_{\ell_0})\disc(V_\Fn)$ (refer to Remark \ref{rmk:O-V-W}).
Following Theorem \ref{thm:W12b}, we choose the local Langlands correspondence $\iota_a$ where $a=(-1)^{\Fn}\disc(\CO_\ell)$ for the even special orthogonal group.
For the odd special orthogonal group, the normalization is unique.

Assume that $G_n$ is a even special orthogonal group.
Let $\pi=\pi_a(\varphi,\chi_a)$ where $\chi_a=\chi_a(\pi)$ via   $\iota_a$.
If $\ell_0(\varphi,\chi_a) < \ell_0(\pi)$, then $\FD_{\ell_0}(\varphi,\chi_a)$ is empty and the local descent $\CD_{\ell_0}(\pi)$ is zero for the $F$-rational orbit $\CO_{\ell_0}$.
If $\ell_0(\varphi,\chi_a)=\ell_0(\pi)$, then $\FD_{\ell_0}(\varphi,\chi_a)$ is not empty.
By the definition of $\FD_{\ell_0}(\varphi,\chi_a)$, for $\phi\in \FD_{\ell_0}(\varphi,\chi_a)$, $\chi_a=\chi^\star(\varphi,\phi)$ and denote $\chi_\phi$ to $\chi^\star(\varphi,\phi)$.
Under $\iota_a$,  by \eqref{eq:thm:HW}, the corresponding representation $\pi_a(\phi,\chi_\phi)$ of $G^{\CO_{\ell_0}}_n$ occurs in $\CD_{\CO_{\ell_0}}(\pi)$.
This gives the decomposition \eqref{item:main-thm-even} in Theorem \ref{th:sdlp}.

Assume that $G_n$ is an odd special orthogonal group.
In this case, the set $\CO_\CZ(\pi)$ is a singleton and $\pi=\pi(\varphi,\chi)$. Thus $\ell_0(\varphi,\chi_\pi)=\ell_0(\pi)$ and $\ell_0(\pi)=\ell_0(\varphi,\chi)$.
Let $W$ be the quadratic space defining $G^{\CO_{\ell_0}}_n$.
By \eqref{eq:thm:DV}, only the $L$-parameters $\phi$ satisfying $\det\phi=\disc(\CO_{\ell_0})\disc(V_\Fn)$ correspond representations of $G^{\CO_{\ell_0}}_n$.
Thus if $\det\phi=\disc(\CO_{\ell_0})\disc(V_\Fn)$, by the definition of $\FD_{\ell_0}(\varphi,\chi)$, $W$ satisfies Equations \eqref{eq:thm:DV} and \eqref{eq:thm:HV}.
Then $\pi_a(\phi,\chi^\star_\phi(\varphi,\phi))$ is a representation of $G^{\CO_{\ell_0}}_n$, where $a=-\disc(\CO_{\ell_0})$.
This gives the decomposition \eqref{item:main-thm-odd} in Theorem \ref{th:sdlp}.

With Proposition \ref{cor:genric-disc} together, we finally complete the proof of Theorem \ref{th:sdlp}

Furthermore, by following  Theorem \ref{pp:dsqlp} and Lemma \ref{lm:key-lem}, we will give a formula of
$\chi^\star_\phi(\varphi,\phi)$ (simply written as $\chi^\star_\phi$) in Theorem \ref{th:sdlp}.
For $L$-parameters $\phi\in \FD_{\ell_0}(\varphi,\chi)$,
$(\phi,\chi^\star_\phi)$ determine the irreducible square-integrable representations $\sigma=\pi_a(\phi,\chi^\star_{\phi}(\varphi,\phi))$
of $G_n^{\CO_{\ell_0}}(F)$, via the given local Langlands correspondence $\iota_a$.
These $\sig$ occur as irreducible summands in the local descent $\CD_{\CO_{\ell_0}}(\pi(\varphi,\chi))$.

Assume that $\phi\in\FD_{\ell_0}(\varphi,\chi)$ is as given in Theorem \ref{pp:dsqlp}, which can be written as
$$
\phi=(\boxplus^k_{l=1}\delta_l\boxtimes 1) \boxplus(\boxplus^r_{{\rm i}=1}m_{\rm i}\rho_{\rm i}\boxtimes\mu_{2\alpha_{{\rm i},r_{\rm i}}+1})
\boxplus(\boxplus^s_{{\rm i'}=1}n_{\rm i'}\varrho_{\rm i'}\boxtimes\mu_{2\beta_{{\rm i'},s_{\rm i'}}+2})
\boxplus \varphi_\sgn.
$$
Here the notation follows from Theorem \ref{pp:dsqlp}.
We may use more self-explanatory notation for the elements in $A_\phi$ to make the formula clearer.
A general element of $A_\phi$ is written as
$$
((e_{\delta,l}),(e_{\rho,{\rm i}}),(e_{\varrho,{\rm i'}}),(e_{i,j}),(\Fe_{i',j'})).
$$
Those components correspond to the components determined by the summands: $\delta_{\rm i}\boxtimes 1$,
$m_{\rm i}\rho_{\rm i}\boxtimes\mu_{2\alpha_{{\rm i},r_{\rm i}}+1}$, $n_{\rm i'}\varrho_{\rm i'}\boxtimes\mu_{2\beta_{{\rm i'},s_{\rm i'}}+2}$,
$\rho_i\boxtimes\mu_{2\alpha_{i,j}+1}$, and $\varrho_{i'}\boxtimes\mu_{2\beta_{i',j'}+2}$, respectively.
Note that $(e_{i,j})$ and $(\Fe_{i',j'})$ are indexed by $1\leq i \leq r$ and $j\in \sgn_{o,\rho_i}(\chi)$, and  by $1\leq i'\leq s$ and $j'\in \sgn_{e,\varrho_{i'}}(\chi)$, respectively.
\begin{cor}\label{character}
With the notation as in Theorem \ref{th:sdlp} and Theorem \ref{pp:dsqlp},
for $\phi\in \FD_{\ell_0}(\varphi,\chi)$, the character
$\chi^\star_\phi((e_{\delta,l}),(e_{\rho,{\rm i}}),(e_{\varrho,{\rm i'}}),(e_{i,j}),(\Fe_{i',j'}))$
can be explicitly written as the following product:
\begin{align*}
&\prod^{k}_{l=1}  (\prod^{s}_{l'=1}\CE(\delta_l,\varrho_{l'})^{s_{l'}})^{e_{\delta,l}}
\times\prod^{r}_{{\rm i}=1} (\prod^{s}_{l'=1}\CE(\rho_{{\rm i}},\varrho_{l'})^{s_{l'}})^{m_{\rm i}e_{\rho,{\rm i}}} \\
\times& \prod^{r}_{i=1}\prod_{j\in \sgn_{o,\rho_i}(\chi)} (\prod^{s}_{l'=1}\CE(\rho_{i},\varrho_{l'})^{s_{l'}}\cdot (-1)^{r_i-j})^{e_{i,j}}\\
\times& \prod^{s}_{{\rm i}'=1}(-1)^{n_{{\rm i}'}s_{\rm i'}e_{\rm i'}}
\times \prod_{i'=1}^{s}\prod_{j'\in \sgn_{e,\varrho_{i'}}(\chi)} (-1)^{j'\Fe_{i',j'}},
\end{align*}
which can also be written as the following product:
\begin{align*}
&\prod^{k}_{l=1}  (\prod^{s}_{l'=1}\CE(\delta_l,\varrho_{l'})^{s_{l'}})^{e_{\delta,l}}
\times (\prod^{s}_{l'=1}\CE(\rho_{{\rm i}},\varrho_{l'})^{s_{l'}})^{\sum^{r}_{{\rm i}=1} m_{\rm i}e_{\rho,{\rm i}}+\sum^{r}_{i=1}\sum_{j\in \sgn_{o,\rho_i}(\chi)} e_{i,j}} \\
&\times \prod^{r}_{i=1}\prod_{j\in \sgn_{o,\rho_i}(\chi)}   (-1)^{(r_i-j)e_{i,j}}
\times \prod^{s}_{{\rm i}'=1}(-1)^{n_{{\rm i}'}s_{\rm i'}e_{\rm i'}}\times
\prod_{i'=1}^{s}\prod_{j'\in \sgn_{e,\varrho_{i'}}(\chi)} (-1)^{j'\Fe_{i',j'}}.
\end{align*}
\end{cor}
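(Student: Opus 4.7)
\textbf{Proof plan for Corollary \ref{character}.} The approach is to read off $\chi^\star_\phi$ directly from Lemma \ref{lm:key-lem} applied with the roles of $\varphi$ and $\phi$ interchanged, and then specialize to the very rigid form of $\phi$ provided by Theorem \ref{pp:dsqlp}. First, by Proposition \ref{cor:genric-disc} one reduces to the case where $\varphi$ is already discrete, i.e.\ $\varphi=\varphi_\Box$ as in \eqref{eq:dsphi}, so that the indexing data $\{\rho_i,\alpha_{i,j}\}$ and $\{\varrho_{i'},\beta_{i',j'}\}$ of $\varphi$ are available. The irreducible constituents of $\phi$ listed in Theorem \ref{pp:dsqlp} partition into five families, corresponding to the generators $e_{\delta,l},e_{\rho,{\rm i}},e_{\varrho,{\rm i'}},e_{i,j},\Fe_{i',j'}$ of $A_\phi$; the contribution of the summands of $\varphi_\sgn$ is automatically trivial under the conventions used to state $\chi^\star_\phi$ on $A_\phi$ (or does not intervene, since the character in the corollary is parameterized only by these five families).

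Next, I would apply \eqref{eq:lm:chi-star-e} and \eqref{eq:lm:chi-star-o} of Lemma \ref{lm:key-lem} with $\varphi$ and $\phi$ exchanged. A generator sitting in $\phi_o$ then contributes a factor of the form $\prod_{l=1}^{s}\CE(\star,\varrho_l)^{s_l}\cdot(-1)^{\#\varphi^{>\beta}_e(\star)}$, and a generator sitting in $\phi_e$ contributes a factor $\det(\star)(-1)^{\alpha\dim\varphi}\cdot(-1)^{\#\varphi^{<\alpha}_o(\star)}$, where $\star$ denotes the underlying irreducible $\CW_F$-representation and $\alpha$ or $\beta$ the relevant Jordan-block index. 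All that remains is to evaluate these five contributions case by case, using (i) $\dim\varphi$ is even, so every $\det(\star)(-1)^{\alpha\dim\varphi}$ factor is $+1$, (ii) the disjointness condition $\{\delta_l\}\cap\{\rho_i\}=\emptyset$ from Item \eqref{item:prop-2} of Theorem \ref{pp:dsqlp}, which forces $\#\varphi^{>0}_e(\delta_l)=0$, and (iii) the monotonicity $\alpha_{i,1}<\cdots<\alpha_{i,r_i}$ and $\beta_{i',1}<\cdots<\beta_{i',s_{i'}}$ of the indices in the decomposition of $\varphi$.

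The counts then read off immediately: for $m_{\rm i}\rho_{\rm i}\boxtimes\mu_{2\alpha_{\rm i,r_{\rm i}}+1}$, $\#\varphi^{>\alpha_{\rm i,r_{\rm i}}}_e(\rho_{\rm i})=0$ because $\alpha_{\rm i,r_{\rm i}}$ is maximal; for $\rho_i\boxtimes\mu_{2\alpha_{i,j}+1}$ with $j\in\sgn_{o,\rho_i}(\chi)$, $\#\varphi^{>\alpha_{i,j}}_e(\rho_i)=r_i-j$; for $n_{\rm i'}\varrho_{\rm i'}\boxtimes\mu_{2\beta_{\rm i',s_{\rm i'}}+2}$, $\#\varphi^{<\beta_{\rm i',s_{\rm i'}}+1}_o(\varrho_{\rm i'})=s_{\rm i'}$; for $\varrho_{i'}\boxtimes\mu_{2\beta_{i',j'}+2}$ with $j'\in\sgn_{e,\varrho_{i'}}(\chi)$, $\#\varphi^{<\beta_{i',j'}+1}_o(\varrho_{i'})=j'$. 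Multiplying together these five contributions yields the first displayed formula of the corollary. The second displayed formula is a rearrangement: the factor $\prod_{l'=1}^s\CE(\rho_{\rm i},\varrho_{l'})^{s_{l'}}$ depends on ${\rm i}$ but not on $j$, so the exponents arising from $e_{\rho,{\rm i}}$ and from all the $e_{i,j}$ with $i={\rm i}$ can be collected into the single exponent $m_{\rm i}e_{\rho,{\rm i}}+\sum_{j\in\sgn_{o,\rho_{\rm i}}(\chi)}e_{{\rm i},j}$.

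The main obstacle is not any single hard calculation but the careful bookkeeping of which $\CE$-factors and which sign factors appear, together with verifying that in each case the underlying representations are of the correct (and opposite) orthogonal/symplectic types for $\CE(\cdot,\cdot)$ to be defined as in \eqref{eq:CE}. This type-matching is automatic from Item \eqref{item:prop-2} of Theorem \ref{pp:dsqlp}, which guarantees that every $\delta_l$ and every $\rho_{\rm i}$ entering $\phi$ has the opposite type from each $\varrho_{l'}$ in $\varphi_o$, so every $\CE(\delta_l,\varrho_{l'})$ and $\CE(\rho_{\rm i},\varrho_{l'})$ is a well-defined symplectic root number, independent of $\psi_F$ by Remark \ref{lm:chi-star-odd}. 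Once this is in place, the proof is a direct unpacking.
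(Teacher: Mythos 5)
Your proposal is correct and is the approach the paper intends (the paper states the corollary without a detailed proof, merely saying it follows from Theorem \ref{pp:dsqlp} and Lemma \ref{lm:key-lem}). You apply Lemma \ref{lm:key-lem} with $\varphi$ and $\phi$ exchanged, use the rigid shape of $\phi$ from Theorem \ref{pp:dsqlp}, and evaluate each of the five families of generators by counting $\#\varphi^{>\alpha}_e(\rho_i)$ and $\#\varphi^{<\beta+1}_o(\varrho_{i'})$ — the counts $r_i-j$, $0$, $s_{i'}$, and $j'$ are all read off correctly, and the observation that $\dim\varphi$ is even kills the $\det(\star)(-1)^{\alpha\dim\varphi}$ factors. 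One sentence in your write-up is garbled: the summands of $\varphi_\sgn$ are precisely the third and fifth families (the $e_{i,j}$ and $\Fe_{i',j'}$ generators), not a separate collection whose contribution is ``automatically trivial''; your subsequent case-by-case computation handles them correctly, so this is a slip of phrasing rather than of substance. One small point worth being aware of: Lemma \ref{lm:key-lem} proves its odd-part formula \eqref{eq:lm:chi-star-o} only on $\CS_\phi$ (the paper itself warns that on $A_\phi$ it is ``slightly different''), so strictly speaking your generator-by-generator derivation establishes the corollary's formula as an identity of characters of $\CS_\phi$; the corollary writes it on $A_\phi$ for notational convenience, but the restriction to $\CS_\phi$ is what the local Langlands correspondence actually consumes.
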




\section{Examples}\label{sec:ex}


We will consider the descents for two special families of the discrete local $L$-parameters.
The spectral decomposition of the local descents in those cases can be even more explicitly described. Also, we will discuss Conjecture \ref{conj-p1}
via some examples.

\subsection{ Cuspidal Local $L$-parameters}\label{sec:clp}
In order to understand the summand $\varphi_\sgn$ defined in \eqref{eq:phi-sgn} occurring in the local descent $\FD_{\ell_0}(\varphi,\chi)$,
we give an example on such summands for cuspidal local $L$-parameters, and their descents.

In \cite[Conjecture 7.5]{AMS15}, Aubert, Moussaoui, and Solleveld
conjectured that an $L$-packet $\Pi_\varphi(G_n)$ of a reductive group contains a supercuspidal representation if and only if
$\varphi$ is a cuspidal $L$-parameter. For the case of split special orthogonal groups considered here, 
it was proved by M\oe glin in \cite{M11} (see also \cite{Xu17}).
One may write cuspidal $L$-parameters as 
$$
\varphi=\boxplus^{r}_{i=1}\boxplus^{r_i}_{j=1}\rho_i\boxtimes\mu_{2j}\boxplus
\boxplus^{s}_{i=1}\boxplus^{s_i}_{j=1}\varrho_i\boxtimes\mu_{2j-1},
$$
where $\sum^s_{i=1}s_ib_i$ (here $b_i=\dim\varrho_i$) is even.
It follows from \cite{M11} (see also \cite[Proposition 3.7]{Mou15}) that $\varphi$ is a cuspidal $L$-parameter  for split even orthogonal group $G_n$
when $\varphi$ is of orthogonal type and $\prod^{s}_{i=1}\det\varrho_i=1$.

Similar to \eqref{eq:e-i}, we write the
elements in $\CS_\varphi$ in the form $((e_{i,j}),(\Fe_{i,j}))$, indexed by the set
$$\{e_{i,j}\colon 1\leq i\leq r,~ 1\leq j\leq r_i\}\cup
\{\Fe_{i,j}\colon 1\leq i\leq s,~ 1\leq j\leq s_i\}.
$$
The index sets correspond to the summands $\rho_i\boxtimes\mu_{2j}$ and $\varrho_i\boxtimes\mu_{2j-1}$ respectively.
If $\varrho_i$ has even dimension for all $1\leq i\leq s$,
then
$$
\CS_\varphi=\{((e_{i,j}),(\Fe_{i,j}))\in\BZ_2^{r}\times \BZ_2^s\colon
 e_{i,j}, \Fe_{i,j}\in\{0,1\}\}.
$$
Otherwise, $\CS_\varphi$ is the subgroup of $\BZ_2^{r}\times \BZ_2^s$ consisting of elements with the condition that  $\sum_{i,j}\Fe_{i,j}\dim\varrho_i$ is even.
Define the character $\chi$ in $\widehat{\CS}_\varphi$ by
$$
\chi((e_{i,j}),(\Fe_{i,j}))=\prod^r_{i=1} \prod^{r_i}_{j=1}(-1)^{(j+r_i)e_{i,j}}\cdot
\prod^s_{i=1} \prod^{s_i}_{j=1}(-1)^{(j+s_i)\Fe_{i,j}}.
$$

\begin{rmk}
In some cases, the associated representation $\pi_a(\varphi,\chi)$ is supercuspidal.
For instance, take
$
\varphi=\boxplus^{r}_{i=1}\boxplus^{2r_i}_{j=1}\rho_i\boxtimes\mu_{2j}.
$
By definition, $\chi(1_{\rho_i\boxtimes\mu_{2}})=-1$ for all $1\leq i\leq r$ and $\chi((1))=1$ where $(1)$ is the element with $e_{i,j}=1$ for all $i,j$.
Thus, $\pi_a(\varphi,\chi)$ is a representation of  a symplectic group or a split even special orthogonal group, because $\det(\varphi)=1$.
By \cite{Mou15}, $\pi_a(\varphi,\chi)$ is supercuspidal.
\end{rmk}

Under the above assumption, by \eqref{eq:phi-sgn}, we have that
$$
\varphi_\sgn=\boxplus^{r}_{i=1}\boxplus^{2[\frac{r_{i}}{2}]}_{j=1}\rho_i\boxtimes\mu_{2(r_{i}-j)+1}\boxplus
\boxplus^{s}_{i=1}\boxplus^{s_i-1}_{j=1}\varrho_i\boxtimes\mu_{2j}.
$$
By convention, the summand $\varrho_i\boxtimes\mu_{2j}$ is empty if $s_i=1$.
As $\sum^s_{j=1}s_jb_j$ is even and
$$
\dim\varphi-\dim\varphi_\sgn=\sum^r_{i=1}2a_i[\frac{r_i+1}{2}]+\sum^s_{j=1}s_jb_j,
$$
we deduce that $\dim\varphi_\sgn$ is even.
By the definition in \eqref{eq:varphi-o-dagger}, one has that
$$
\lceil \varphi_o\rceil=\boxplus^s_{i=1}\varrho_i\boxtimes\mu_{2s_i-1}
\text{ and }
\chi_{\lceil \varphi_o\rceil}=1.
$$

According to Theorem \ref{pp:dsqlp}, if $\phi\in\FD_{\ell_0}(\varphi,\chi)$, we may decompose $\phi$ as $\phi=\psi\boxplus \varphi_\sgn$,
where $\psi$ satisfies the conditions in Theorem \ref{pp:dsqlp}.
Then $\psi=0$ is the unique choice.
In fact, $\dim\psi=0$ is even and of the minimal dimension.
By convention, $\chi^\star_{\varphi^\dagger}(\varphi^\dagger,\psi)=1$.
Thus,
\begin{equation}
\FD_{\ell_0}(\varphi,\chi)=
[\varphi_\sgn]_c=\{\varphi_\sgn,\varphi^c_\sgn\}
\text{ and }
\ell_0=\sum^r_{i=1}a_i[\frac{r_i+1}{2}]+\sum^s_{j=1}\frac{s_jb_j}{2}.	
\end{equation}
Similarly, assume that the elements in $\CS_\phi$ are of form $((e'_{i,j}),(\Fe'_{i,j}))$,
where $e'_{i,j}$ and $\Fe'_{i,j}$ correspond to $\rho_{i}\boxtimes \mu_{2(r_i-j)+1}$-component and $\varrho_{i}\boxtimes \mu_{2j}$-component respectively.
Define the character $\zeta\in \widehat{\CS}_\phi$ with the following conditions: for all $i$
\begin{itemize}
	\item $\zeta(1_{\rho_{i}\boxtimes \mu_{2(r_i-j)+1}})=1$ when $j=2[\frac{r_{i}}{2}]$ and
	$\zeta(1_{\varrho_{i}\boxtimes \mu_{2}})=-1$;
	\item $\zeta(1_{\rho_{i}\boxtimes \mu_{2(r_i-j)-1}\boxplus \rho_{i}\boxtimes \mu_{2(r_i-j)+1}})=-1$ for $1\leq j<2[\frac{r_{i}}{2}]$;
	\item $\zeta(1_{\varrho_{i}\boxtimes \mu_{2j}\boxplus \varrho_{i}\boxtimes \mu_{2(j+1)}})=-1$  for $1\leq j<s_i-1$.
\end{itemize}
By Lemma \ref{lm:key-lem}, one has that
$
(\chi,\zeta)=\chi^\star(\varphi,\phi).
$
By choosing the quadratic spaces and the Langlands correspondence as in Theorem \ref{thm:W12b}, we have that
$
m(\pi_a(\varphi,\chi),\pi_a(\phi,\zeta))=1.
$

For example,
when $G_n=\SO(V_{2n+1})$ is an odd special orthogonal group, take $\CO_{\ell_0}$ with $\disc(\CO_{\ell_0})=\disc(V_{2n+1})$ as $\det(\varphi_\sgn)=1$, which is the unique $F$-rational orbit such that $\CD_{\CO_{\ell_0}}(\pi(\varphi,\chi))\ne 0$ and
$$
\CD_{\CO_{\ell_0}}(\pi(\varphi,\chi))=\begin{cases}
	\pi_a(\varphi_\sgn,\zeta) &\text{ if some $\dim\rho_i$ is odd,}\\
	\pi_a(\varphi_\sgn,\zeta)\oplus \pi_a(\varphi^c_\sgn,\zeta) &\text{ otherwise,}
\end{cases}
$$
where $a=-\disc(V_{2n+1})$.
Note that $\CD_{\CO_{\ell_0}}(\pi(\varphi,\chi))$ is a representation of a pure inner form of split even orthogonal group as $\det(\varphi_\sgn)=1$.

\subsection{ Discrete unipotent representations}\label{sec:dur}
Following Arthur (\cite{A89}) and M\oe glin (\cite{M96D}), 
a representation $\pi(\phi,\chi)$ is a unipotent representation if  and only if $\phi$ is trivial on the inertia subgroup $\CI=\CI_F$ of the
local Weil group $\CW_F$. 
In this case, such a $\phi$ is called a unipotent local $L$-parameter.
We apply the local descent method to give an explicit description on the descent of discrete unipotent representations.

Denote $\xi_{un}=(\cdot,\eps)_F$ to be the nontrivial unramified quadratic character of $F^\times$, which is also regarded as a character $\CW_F$ via the local class field theory.
Here $\eps$ is a non-square element in $F$ with absolute value 1.
Let $\varphi$ be a discrete unipotent  $L$-parameter. Then it can be written as
\begin{equation}
\varphi=\begin{cases}
	\boxplus^r_{i=1}1\boxtimes \mu_{2a_i}\boxplus\boxplus^s_{j=1} \xi_{un}\boxtimes \mu_{2b_j}, &\text{ if } G_n=\SO_{2n+1}\\
	\boxplus^r_{i=1}1\boxtimes \mu_{2a_i+1}\boxplus\boxplus^s_{j=1} \xi_{un}\boxtimes \mu_{2b_j+1}, &\text{ if } G_n=\SO_{2n}.
\end{cases}
\end{equation}
Recall the calculation on the local root number from Example \ref{ex:eps-tensor-char}. We obtain the following.

\begin{cor}\label{cor:unipotent-odd}
Let $\pi(\varphi,\eta)$ be a discrete unipotent representation of $G_n=\SO(V_{2n+1})$.
If $\disc(\CO_{\ell_0})\neq\det(\varphi_\sgn)\disc(V_{2n+1})$, then $\CD_{\CO_{\ell_0}}(\pi)=0$; and if 
$\disc(\CO_{\ell_0})=\det(\varphi_\sgn)\disc(V_{2n+1})$, then 
$\CD_{\CO_{\ell_0}}(\pi)=\pi_a(\varphi_\sgn,\zeta)$ is irreducible. 
Here $\zeta=\chi^\star_{\varphi_\sgn}(\varphi,\varphi_\sgn)$ and $a=-\det(\varphi_\sgn)\disc(V_{2n+1})$.
\end{cor}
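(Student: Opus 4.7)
The plan is to apply Theorem \ref{pp:dsqlp} and Theorem \ref{th:sdlp}(2b) directly to the discrete unipotent parameter $\varphi$, using the simplified structure that unipotency imposes.

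First I would analyze the decomposition of $\varphi$ in the framework of Theorem \ref{pp:dsqlp}. Since $G_n=\SO(V_{2n+1})$ has symplectic $L$-group, $\varphi$ is self-dual of symplectic type. Under the unipotent hypothesis each summand is of the form $\rho\boxtimes\mu_{2k}$ with $\rho\in\{1,\xi_{un}\}$ (both self-dual characters of $\CW_F$ trivial on $\CI_F$) and $\mu_{2k}$ even-dimensional. In the notation of (4.3)--(4.4) this means $\varphi_e=\varphi$ and $\varphi_o=0$. Consequently $\varphi^\dagger=0$ and $\lceil\varphi_o\rceil=0$, so both $\CS_{\varphi^\dagger}$ and $\CS_{\lceil\varphi_o\rceil}$ are trivial and Condition (4) of Theorem \ref{pp:dsqlp} is vacuously satisfied.

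Next I would show, via Theorem \ref{pp:dsqlp}, that $\FD_{\ell_0}(\varphi,\eta)=\{[\varphi_\sgn]_c\}$ consists of a single $c$-orbit with $\psi=0$ being the unique minimal choice. The limited supply of unipotent orthogonal characters (at most $\{1,\xi_{un}\}$, both excluded as $\delta_i$ by Condition (3) once they appear in $\varphi$) together with the parity constraint of Condition (1) forces $\psi=0$, and Conditions (2)--(4) are then trivially met. Then I would feed this single-orbit descent into the explicit formula of Theorem \ref{th:sdlp}\eqref{item:main-thm-odd}. The sum runs only over $\phi\in[\varphi_\sgn]_c$ satisfying $\det(\phi)=\disc(\CO_{\ell_0})\disc(V_{2n+1})$, which (since $c$-conjugation preserves determinants) is either satisfied by the whole orbit or by none. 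This dichotomy produces exactly the two cases of the corollary: either $\disc(\CO_{\ell_0})\ne\det(\varphi_\sgn)\disc(V_{2n+1})$, in which case $\CD_{\CO_{\ell_0}}(\pi)=0$, or equality holds and a single term $\pi_a(\varphi_\sgn,\zeta)$ contributes, with $a=-\disc(\CO_{\ell_0})=-\det(\varphi_\sgn)\disc(V_{2n+1})$ and $\zeta=\chi^\star_{\varphi_\sgn}(\varphi,\varphi_\sgn)$ directly read off from the formula.

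Irreducibility then follows from Theorem \ref{pp:dsds}: the descent is a multiplicity-free direct sum of square-integrable representations indexed by distinct Bernstein components, and here only one summand is present. The main technical obstacle is the rigorous justification that $\psi=0$ is admissible, i.e., that the parity condition $\dim\psi\equiv\sum_i\#\sgn_{o,\rho_i}(\eta)\pmod 2$ is compatible with $\psi=0$ in the unipotent setup and that no smaller-dimensional non-unipotent $\psi$ enters the first occurrence descent. This requires combining the central-character constraint on $\eta$ (which ensures the trivial action on $-I\in\Sp_{2n}(\BC)$) with the condition (3) restriction ruling out $\delta\in\{1,\xi_{un}\}\cap\{\rho_i\}$, so that within the unipotent world $\psi=0$ is forced and the first occurrence index itself is determined by $\dim\varphi_\sgn$.
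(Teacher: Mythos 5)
Your overall framework --- reading the first-occurrence descent off Theorem~\ref{pp:dsqlp} and substituting into part~\eqref{item:main-thm-odd} of Theorem~\ref{th:sdlp}, then invoking Theorem~\ref{pp:dsds} for irreducibility --- matches what the paper does (the paper compresses this to a one-line reference to Example~\ref{ex:eps-tensor-char}). Your preliminary observations that in this case $\varphi_o=0$, hence $\varphi^\dagger=\lceil\varphi_o\rceil=0$ and Condition~(4) of Theorem~\ref{pp:dsqlp} is vacuous, are correct.

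The genuine gap is the step where you conclude that $\psi=0$ is the unique admissible choice, and the justification you sketch for it does not work. Condition~(3) of Theorem~\ref{pp:dsqlp} only excludes $\delta_i\in\{\rho_i\}\subseteq\{1,\xi_{un}\}$; it does not forbid ramified quadratic characters $\delta$ of $\CW_F$ from appearing as $\delta_i$'s, so the $\psi$'s are not confined to the ``unipotent world'' at all. Whether $\psi=0$ is permitted is governed purely by Condition~(1), i.e.\ by the parity of $\sum_i\#\sgn_{o,\rho_i}(\eta)\equiv\dim\varphi_\sgn\pmod 2$, and this parity is \emph{not} controlled by $\eta((1))$ (which corresponds to $-I\in\Sp_{2n}(\BC)$ and may freely be $\pm1$ across pure inner forms). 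Rather it is $\prod_i\eta\bigl(1_{\rho_i\boxtimes\mu_{2\alpha_{i,r_i}}}\bigr)$, the value on the top-exponent summands, that determines it; and unlike the even case of Corollary~\ref{cor:unipotent-even}, for $\SO(V_{2n+1})$ the $\CZ$-action on $\wh{\CS}_\varphi$ is trivial so there is no normalization freedom to adjust this sign. Thus the ``central-character constraint'' you invoke to discharge the main technical obstacle is not a real constraint, and when $\dim\varphi_\sgn$ is odd the minimal $\psi$ necessarily has $\dim\psi=1$ and may be a ramified quadratic character. Separately, your irreducibility argument silently assumes $[\varphi_\sgn]_c$ is a singleton: you note only that $c$-conjugation preserves determinants, but that does not rule out two distinct members contributing. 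What is needed (and what the paper states explicitly after the corollary) is that $\varphi_\sgn$ is $c$-invariant because every summand $\rho_i\boxtimes\mu_{2\alpha_{i,j}+1}$ is an odd-dimensional irreducible orthogonal constituent.
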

Note that when $G_n=\SO(V_{2n+1})$, the descent $\varphi_\sgn$ is invariant under $c$-conjugate.
And the local descent $\CD_{\CO_{\ell_0}}(\pi)$ is an irreducible unipotent discrete representation.


Now, let us consider the case $G_n=\SO(V_{2n})$.
Suppose that an irreducible unipotent discrete representation $\pi$ in the $L$-packet ${\Pi}_\varphi(G_n)$.
One can choose a local Langlands correspondence $\iota_a$ such that $\pi=\pi_a(\varphi,\eta)$ with
the condition that  $\eta(1_{1\boxtimes \mu_{2a_r+1}\boxplus \xi_{un}\boxtimes \mu_{2b_s+1}})=1$.
To prove this, let us  consider the action of $\CZ$ on $\widehat{\CS}_\varphi$.
In this case, as $\det\varphi=\eps^s$,
the character $\eta_\varpi$ (defined in Section \ref{ssec-rllc}) is equal to
$$
\eta_\varpi(((e_i),(\Fe_j)))= \prod^s_{j=1}(-1)^{\Fe_j},
$$
where $\varpi$ is a uniformizer in $F$.
Then $\chi(1_{1\boxtimes \mu_{2a_r+1}\boxplus \xi_{un}\boxtimes \mu_{2b_s+1}})=1$ or $\chi\otimes\eta_\varpi(1_{1\boxtimes \mu_{2a_r+1}\boxplus \xi_{un}\boxtimes \mu_{2b_s+1}})=1$ for any $\chi\in \widehat{\CS}_\varphi$.
Hence, by the definition of $\CO_\CZ(\pi)$ in \eqref{eq:CZ}, there exists a character $\chi$ in $\CO_\CZ(\pi)$ such that $\chi(1_{1\boxtimes \mu_{2a_r+1}\boxplus \xi_{un}\boxtimes \mu_{2b_s+1}})=1$.
Then we  can choose $\iota_a$ such that $\chi_a(\pi)=\chi$, which is the desired normalization for the local Langlands correspondence.


\begin{cor}\label{cor:unipotent-even}
Let $\pi$ be a discrete unipotent representation of $G_n=\SO(V_{2n})$ in $\Pi_\varphi[G_n^*]$.
Choose the local Langlands correspondence $\iota_a$ such that $\pi=\pi_a(\pi,\eta)$ with $\eta(1_{1\boxtimes \mu_{2a_r+1}\boxplus \xi_{un}\boxtimes \mu_{2b_s+1}})=1$. 
If $\disc(\CO_{\ell_0})\neq a$, then $\CD_{\CO_{\ell_0}}(\pi)=0$; and if $\disc(\CO_{\ell_0})=a$, 
then $\CD_{\CO_{\ell_0}}(\pi)=\pi(\varphi_\sgn,\zeta)$ is an irreducible representation of $\SO(V_{2m+1})$ with $m=\dim\varphi_{\sgn}/2$. 
Here $\zeta=\chi^\star_{\varphi_\sgn}(\varphi,\varphi_\sgn)$, $\disc(V_{2m+1})=-a \eps^s$ and
$$
\Hss(V_{2m+1})= (-1,-1)^{\frac{m(m+1)}{2}}((-1)^{m+1},\disc(V_{2m+1}))\eta((1)).
$$
\end{cor}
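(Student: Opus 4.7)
The plan is to apply Theorem \ref{pp:dsqlp} to identify the descent of $L$-parameters in this unipotent case, and then invoke Theorem \ref{th:sdlp}\eqref{item:main-thm-even} for the spectral decomposition, using the explicit root number formula in Example \ref{ex:eps-tensor-char}.

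First I would transcribe $\varphi$ into the notation of \eqref{eq:dsphi}: every irreducible summand of $\varphi$ is of the form $\rho\boxtimes\mu_k$ with $\rho\in\{1,\xi_{un}\}$ and $k$ odd, so $\varphi_e=0$ and $\varphi_o=\varphi$, with $\varrho_1=1$, $\varrho_2=\xi_{un}$, $\beta_{1,j}=a_j$ and $\beta_{2,j}=b_j$. Consequently
$$\varphi^\dagger=1\boxplus\xi_{un},\qquad \lceil\varphi_o\rceil=1\boxtimes\mu_{2a_r+1}\boxplus\xi_{un}\boxtimes\mu_{2b_s+1},$$
while $\varphi_\sgn$ (see \eqref{eq:phi-sgn}) is an orthogonal sum of even-dimensional pieces $1\boxtimes\mu_{2a_j+2}$ and $\xi_{un}\boxtimes\mu_{2b_j+2}$ indexed by $\sgn_{e,\varrho_i}(\eta)$. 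Applying Theorem \ref{pp:dsqlp}, the empty $\rho$-part forces $\dim\psi$ to be even by condition \eqref{item:prop-dim}, so the minimal choice is $\psi=0$; with $\psi=0$, Example \ref{ex:psi} shows that $\chi^\star_{\varphi^\dagger}(\varphi^\dagger,0)$ is the trivial character, and condition \eqref{item:prop-3} reduces to $\eta\vert_{\CS_{\lceil\varphi_o\rceil}}=1$. Since both summands of $\lceil\varphi_o\rceil$ have odd dimension, $\CS_{\lceil\varphi_o\rceil}\cong\BZ_2$ is generated by $1_{1\boxtimes\mu_{2a_r+1}\boxplus\xi_{un}\boxtimes\mu_{2b_s+1}}$, so the compatibility is exactly the hypothesis on $\eta$. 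Hence $\FD_{\ell_0}(\varphi,\eta)=\{\varphi_\sgn\}$, a singleton since the descent lands on an odd orthogonal group where $c$-conjugation is trivial.

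Second, I would determine for which rational orbits the descent is nonzero. Using the definition of $\eta_\alpha$ from Section \ref{ssec-rllc} one computes $\eta_\alpha(1_{1\boxtimes\mu_{2a+1}})=1$ and $\eta_\alpha(1_{\xi_{un}\boxtimes\mu_{2b+1}})=(\eps,\alpha)_F=\xi_{un}(\alpha)$. For $\CO_{\ell_0}$ with $a':=\disc(\CO_{\ell_0})$, Theorem \ref{th:sdlp}\eqref{item:main-thm-even} uses $\iota_{a'}$, so the relevant character is $\chi_{a'}(\pi)=\eta\otimes\eta_\alpha$ for the appropriate $\alpha$ linking $a$ and $a'$; the compatibility then becomes $\xi_{un}(\alpha)=1$, which under the hypothesis-fixed normalization of $\eta$ translates exactly into $\disc(\CO_{\ell_0})=a$ in the effective $\CZ$-orbit sense. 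In the complementary coset, $\FD_{\ell_0(\pi)}(\varphi,\chi_{a'}(\pi))=\emptyset$ and hence $\CD_{\CO_{\ell_0}}(\pi)=0$.

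Third, when $\disc(\CO_{\ell_0})=a$, Theorem \ref{th:sdlp}\eqref{item:main-thm-even} yields the single summand $\CD_{\CO_{\ell_0}}(\pi)=\pi(\varphi_\sgn,\zeta)$ with $\zeta=\chi^\star_{\varphi_\sgn}(\varphi,\varphi_\sgn)$, irreducible because $V_{2m+1}$ is odd-dimensional. The quadratic form data on the descent group follow directly: $\disc(V_{2m+1})=-\disc(\CO_{\ell_0})\disc(V_{2n})=-a\eps^s$, using $\disc V_{2n}\equiv\det\varphi=\xi_{un}^s\equiv\eps^s$, and the Hasse invariant formula is \eqref{eq:thm:HW} together with the identification $\CE(\varphi,\varphi_\sgn)=\chi^\star_\varphi((1))=\eta((1))$ coming from \eqref{eq:dist-eps-even-W}. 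The main obstacle is aligning the $\CZ$-action on $\widehat\CS_\varphi$ with the Langlands normalizations $\iota_a$ versus $\iota_{\disc(\CO_{\ell_0})}$, so that the single-equation hypothesis $\eta(1_{1\boxtimes\mu_{2a_r+1}\boxplus\xi_{un}\boxtimes\mu_{2b_s+1}})=1$ translates precisely into the dichotomy $\disc(\CO_{\ell_0})=a$ versus the vanishing case; carrying this out requires a careful bookkeeping of how the Hilbert symbol $(\eps,\cdot)_F$ governs both the $\CZ$-twist and the compatibility of discriminants.
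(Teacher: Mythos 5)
Your proposal is correct and follows essentially the same route as the paper's preamble in Section~\ref{sec:dur}: cast $\varphi$ in the shape~\eqref{eq:dsphi}, force $\psi=0$ in Theorem~\ref{pp:dsqlp} via condition~\eqref{item:prop-dim}, match condition~\eqref{item:prop-3} there to the normalization $\eta(1_{\lceil\varphi_o\rceil})=1$, and then read off the spectral and quadratic-form data from Theorem~\ref{th:sdlp}\eqref{item:main-thm-even} combined with the $\CZ$-twist $\eta_\alpha$ and equations~\eqref{eq:thm:HW},~\eqref{eq:thm:DV}. The ``obstacle'' you flag at the end is a genuine looseness, but it is present in the statement of the corollary at exactly the same level---the hypothesis pins down $a$ only up to a $\Ker\xi_{un}$-coset (since $\eta_\alpha$ depends on $\alpha$ only through $\xi_{un}(\alpha)$), so ``$\disc(\CO_{\ell_0})=a$'' must be read modulo $\Ker\xi_{un}$, and the paper leaves this tacit just as you do.
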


\subsection{On Conjecture \ref{conj-p1}}\label{sec:cp1}
We will show that Conjecture \ref{conj-p1} holds for certain representations of $\SO^*_7$.

Referring to \cite{CM93}, for $\SO^*_7$, all stable unipotent orbits are parameterized by the following partitions $\underline{p}$, respectively
\begin{equation}\label{eq:order}
[7],\quad [5,1^2],\quad[3^2,1],\quad
[3,2^2],\quad [3,1^4],\quad [2^2,1^3],\quad
[1^7],	
\end{equation}
where the powers indicate the multiplicities in the partitions,
and the corresponding unipotent orbits are listed following the topological order.
In particular, $[7]$ is for the regular unipotent orbit and $[1^7]$ is for the trivial orbit.
In this case, this topological order is a total order.
Hence, for an irreducible smooth representation $\pi$,
the set $\Fp^m(\pi)$ is a singleton.
We may assume that
\begin{equation}\label{eq:Fp-pi}
\Fp^m(\pi)=\{\underline{p}=[p_1p_2\cdots p_r]\} \text{ where } p_1\geq p_2\geq \cdots\geq p_r>0.	
\end{equation}

Let $\pi$ be an irreducible square-integrable representation of $\SO(V_7,F)$ and $\varphi$ be its $L$-parameter.
We are going to apply our main results to the following two types of $\varphi$:
\begin{enumerate}
\item $\varphi=\chi_1\boxtimes \mu_{4}\boxplus \chi_2\boxtimes \mu_{2}$ or
\item $\varphi=\chi_1\boxtimes \mu_{2}\boxplus \chi_2\boxtimes \mu_{2}\boxplus \chi_3\boxtimes \mu_{2}$,
\end{enumerate}
where $\chi_i$ are quadratic characters,
and to verify that Conjecture \ref{conj-p1} holds for the representations in the corresponding Vogan packets.
For simplicity, we assume that $-1$ is a square in $F^\times$.
Then $(\det\varphi,-1)_F=1$ in \eqref{eq:CE} for all $L$-parameters $\varphi$.
We take $V_7$ satisfying $\disc(V_7)=-1=1 \bmod{F^{\times 2}}$.
For the odd  special orthogonal group, the local Langland correspondence is unique and denote $\chi_\pi$ to be the corresponding character in $\wh{\CS}_\varphi$.

To verify Conjecture \ref{conj-p1}, we will construct an $L$-parameter $\phi$ in $\FD_{\ell}(\varphi,\chi_\pi)$ for some $\ell\in\{1,2\}$, which implies
 the descent $\FD_{\ell}(\varphi,\chi_\pi)$ is not empty.
By Theorem \ref{th:sdlp}, for such $\phi$ the $F$-rational orbit $\CO_\ell$ and the quadratic space $W$ defining $G^{\CO_{\ell}}_n$ are determined by
$\disc(\CO_\ell)=\disc(W)=\det\phi\bmod{F^{\times 2}}$.
Following \eqref{eq:thm:HW} and \eqref{eq:thm:HV}, after simplification, $\Hss(V_7)=\Hss(W)=\chi_\pi((1))$.
The local Langlands correspondence $\iota_a$ for the even special orthogonal group $\SO(W)$ is normalized by $a=\det\phi\bmod{F^{\times 2}}$.
Hence we obtain that the following irreducible square-integrable representation of $\SO(W,F)$
$$
\sig=\pi_{\det\phi}(\phi,\chi^\star(\varphi,\phi))
$$
occurs in $\CD_{\CO_\ell}(\pi)$  for the above chosen $F$-rational orbit,
where
$$
\disc(W)=\det\phi \text{ and }	\Hss(W)=\Hss(V_7). 	
$$
It follows that $p_1$ in \eqref{eq:Fp-pi} is equal to or greater than $2\ell+1$.
Because $\ell\in\{1,2\}$, we have a lower bound $p_1\geq 3$ by the total order in \eqref{eq:order}.

Now, if $p_1=7$, $\pi$ is generic and then $\ell_0=3$.
Conjecture  \ref{conj-p1} holds.
If $p_1=5$, then the nonvanishing the twisted Jacquet module associated $\underline{p}=[51^2]$ is equivalent to the nonvanishing of the local descent $\CD_{\CO_2}(\pi)$ by Lemma \ref{lm:giq}.
By definition, the first occurrence index $\ell_0$ equals $2$.
If $p_1=3$, by the above lower bound $p_1\geq 3$ we have $p_1=3$.
Then $\ell_0=1$ in this case, which implies the conjecture.

Furthermore, for these two types of parameters, our results may explicitly determine $\Fp^m(\pi)$ in terms of $(\varphi,\chi_\pi)$ associated to $\pi$.
The detailed calculation will be given in the rest.

{\bf Type (1):} Assume that $\varphi=\chi_1\boxtimes \mu_{4}\boxplus \chi_2\boxtimes \mu_{2}$.
Then $\CS_\varphi\cong \BZ_2\times\BZ_2$.
Denote $\zeta_{+}$ (resp. $\zeta_{-}$) to be the trivial (resp. non-trivial) character of $\BZ_2$.
Then we may write the characters of $\CS_\varphi$ as $\zeta_{\pm}\otimes\zeta_\pm$.
Its Vogan packet $\Pi_{\varphi}[\SO_7^*]$ contains four representations $\pi(\varphi,\zeta_{\pm}\otimes\zeta_\pm)$.

If $\pi=\pi(\varphi,\zeta_{+}\otimes\zeta_+)$, then $\pi$ is the unique generic representation in $\Pi_{\varphi}[\SO_7^*]$.
We have $\ell_0=3$ and $\udl{p}=[7]$.

For $\pi=\pi(\varphi,\zeta_{-}\otimes\zeta_-)$, choose
$$
\phi=\begin{cases}
	\chi_1\boxplus\chi_2, &\text{ if }\chi_1\ne\chi_2\\
	\chi_1\boxplus\chi', &\text{ if }\chi_1=\chi_2,
\end{cases}
$$
where $\chi'$ is a quadratic character not isomorphic to $\chi_1$ and $\chi_2$.
Since $\pi$ is non-generic, we have $\ell_0(\pi)\leq 2$.
By Lemma \ref{lm:key-lem},  $\phi\in \FD_{2}(\varphi,\zeta_{-}\otimes\zeta_-)$
and then $\ell_0(\pi)\geq 2$.
It follows  that $\ell_0(\pi)=2$ and $\Fp^{m}(\pi)=[5,1^2]$.

When $\pi=\pi(\varphi,\zeta_{+}\otimes\zeta_-)$ or $\pi(\varphi,\zeta_{-}\otimes\zeta_+)$, $\Hss(V_7)=-1$ (i.e., $\SO_7$ is non-split) and $\ell_0\leq 2$.
Similarly, we have $\chi_2\boxplus\chi'\in\FD_{2}(\varphi,\zeta_{+}\otimes\zeta_-)$
and $\phi=\chi_1\boxplus\chi'\in\FD_{2}(\varphi,\zeta_{-}\otimes\zeta_+)$.
In both cases, $\ell_0\geq 2$ and then $\ell_0(\pi)=2$ and $\Fp^{m}(\pi)=[5,1^2]$.

{\bf Type (2):} Assume that $\varphi=\chi_1\boxtimes \mu_{2}\boxplus \chi_2\boxtimes \mu_{2}\boxplus \chi_3\boxtimes \mu_{2}$.
Since $\pi$ is square-integrable, $\chi_1$, $\chi_2$ and $\chi_3$ are distinct.
Then $\CS_\varphi\cong\BZ_2\times\BZ_2\times\BZ_2$
and the character of $\CS_\varphi$ is of form $\zeta_1\otimes\zeta_2\otimes\zeta_3$,
where $\zeta_i$ for $1\leq i\leq 3$ are characters of $\BZ_2$.

Let $\pi=\pi(\varphi,\zeta_1\otimes\zeta_2\otimes\zeta_3)$.
If $\zeta_i=\zeta_+$ for all $1\leq i\leq 3$, then $\pi$ is generic and $\ell_0(\pi)=3$.
All the other representations are non-generic. For the rest cases, we always have $\ell_0(\pi)\leq 2$, i.e., $\udl{p}\ne [7]$.

As $F^\times/(F^\times)^2$ contains at least 4 elements,
there exists a character $\chi'$ of $F^\times$ such that $\chi'^2=1$ and $\chi'$ is not isomorphic to any $\chi_i$ for $1\leq i\leq 3$.
When $\zeta_{i_0}=\zeta_-$ for some $i_0$ and $\zeta_i=\zeta_+$ for all $i\ne i_0$, we may take $\phi=\chi_{i_0}\boxplus\chi'$.
In this case, $\SO_7$ is non-split.
It follows that $\phi\in\FD_{2}(\varphi,\otimes^3_{i=1}\zeta_i)$
and then $\ell_0(\pi)=2$ and $\Fp^{m}(\pi)=[5,1^2]$.

If $\zeta_{i_0}=\zeta_+$ for some $i_0$ and $\zeta_i=\zeta_-$ for all $i\ne i_0$, we may take $\phi=\chi_i\boxplus \chi_j$ where $\{i,j\}=\{1,2,3\}\smallsetminus\{i_0\}$.
One has that $\phi\in\FD_{2}(\varphi,\otimes^3_{i=1}\zeta_i)$ and hence $\ell_0(\pi)=2$ and $\Fp^{m}(\pi)=[5,1^2]$.

If $\zeta_i=\zet_-$ for all $1\leq i\leq 3$, we may take $\phi=\boxplus^3_{i=1}\chi_i\boxplus \chi'$, which is in $\FD_1(\varphi,\otimes^3_{i=1}\zeta_i)$.
We have the lower bound $\ell_0(\pi)\geq 1$.
By the above discussion, this implies Conjecture \ref{conj-p1} for this representation.
In this case, one may also explicitly determine $\Fp^m(\pi)$ by calculating the symplectic root number $\varepsilon(\tau\times\chi_i)$ where $\tau$ is a supercuspidal representation of $\GL_2(F)$ with the central character $\omega_\tau=1$ (i.e., of symplectic type).
We omit the details here.

\begin{rmk}
Beside the above {\bf Type (1)} and {\bf Type (2)}, for any irreducible smooth representation $\pi$ of $\SO(V_7,F)$ in a generic $L$-packet,
we may obtain the lower bound $\ell_0(\pi)\geq 1$ by using an alternative global argument.
Then Conjecture \ref{conj-p1} holds for all pure inner forms of $\SO^*_7$.
However, such global arguments only work for the special orthogonal groups of lower rank.
\end{rmk}

\begin{rmk}[Counter Example]
We give an example to show that Conjecture \ref{conj-p1} may not be true for non-tempered representations.
Let $G^*_n=\SO^*_4$ be the split even orthogonal group.
Note that $\SO^*_4$ only have 4 stable unipotent orbits, whose corresponding partitions are
$$
[3,1], [2^2]^I, [2^2]^{II}, [1^4].
$$
Here $[2^2]^I$ and $[2^2]^{II}$ are the same partitions of 4 but give two different unipotent orbits.
We take $\pi$ to be the irreducible non-generic non-tempered infinite dimensional representation of $\SO^*_4$.
Then it is not generic and has a nonzero twisted Jacquet model associated to $[2^2]^I$ or $[2^2]^{II}$.
In this case, the largest part $p_1$ is even and not equal to $2\ell+1$ for all $\ell$.
In general, one can find a family of non-tempered  representations $\pi$ of $\SO_{2n}^*$, whose largest parts $p_1$ in the partitions of $\Fp^m(\pi)$ are even.
Hence Conjecture 1.8 fails for those non-tempered representations.
\end{rmk}

\end{document}